\documentclass[12pt,reqno]{amsart}
\usepackage[margin=29mm]{geometry}
\usepackage{amsmath}
\usepackage{amsfonts}
\usepackage{amsthm}
\usepackage[utf8]{inputenc}
\usepackage{graphicx}
\usepackage{tikz-cd}
\usepackage{enumitem}
\usepackage{mathtools}
\usepackage{amssymb}
\usepackage{microtype}

\usetikzlibrary{arrows.meta, positioning, shapes.geometric}

\theoremstyle{plain}
\newtheorem{thm}{Theorem}[section]
\newtheorem{lem}[thm]{Lemma}
\newtheorem{prop}[thm]{Proposition}
\newtheorem{cor}[thm]{Corollary}
\newtheorem{ques}[thm]{Question}

\theoremstyle{definition}
\newtheorem{defn}[thm]{Definition}
\newtheorem{rem}[thm]{Remark}
\newtheorem{conj}[thm]{Conjecture}

\DeclareMathOperator{\Spec}{Spec}
\DeclareMathOperator{\Hom}{Hom}
\DeclareMathOperator{\Gal}{Gal}
\DeclareMathOperator{\Ker}{ker}
\DeclareMathOperator{\Coker}{coker}
\DeclareMathOperator{\im}{im}
\DeclareMathOperator{\Aut}{Aut}

\DeclareMathOperator{\gr}{gr}
\DeclareMathOperator{\Frob}{Frob}

\DeclareMathOperator{\Char}{char}

\DeclareMathOperator{\GL}{GL}

\setlist[enumerate,1]{label=\textup{(\roman*)}}

\theoremstyle{plain}

\newtheorem{introthm}{Theorem}

\newtheorem{introcor}[introthm]{Corollary}

\begin{document}

\title{Weights in \'etale cohomology over mixed-characteristic local fields and applications to anabelian geometry}
\author{Yoshiaki Yamamura}
\date{} 

\address{Department of Mathematics, Faculty of Science, Hokkaido University
Kita 10, Nishi 8, Kita-Ku, Sapporo, Hokkaido, 060-0810, Japan}

\email{yamamura.yoshiaki.x5@elms.hokudai.ac.jp}

\subjclass[2020]{11F80, 11G25, 14F20, 14F30}

\begin{abstract}
  In anabelian geometry, Kummer-faithful fields are expected to be suitable as base fields. 
  In recent years, to characterize Kummer-faithfulness in terms of Galois representations, Ozeki and Taguchi defined a notion of (quasi-)highly Kummer-faithful fields as a variant, and posed the following natural question:
  Are sub-$p$-adic fields quasi-highly Kummer-faithful?
  In this paper, by studying the weights of $\ell$-adic \'etale cohomology over mixed-characteristic local fields and discussing the $p$-adic analogue, we give an affirmative answer to this question. 
  Furthermore, we extend the class of base fields from mixed-characteristic local fields to complete discrete valuation fields whose residue fields are algebraic extensions of some finite field, and give equivalent conditions for the vanishing of the coinvariants of $\ell$-adic and of $p$-adic \'etale cohomology.
  As a result, we show that, for mixed-characteristic complete discrete valuation fields whose residue fields are algebraic extensions of some finite field, Kummer-faithfulness and quasi-high Kummer-faithfulness are equivalent.
\end{abstract}

\maketitle
\markboth{YOSHIAKI YAMAMURA}{Weights in \'etale cohomology over MLF's and applications to anabelian geometry}

\setcounter{section}{-1}

\setcounter{tocdepth}{2}

\tableofcontents

\vspace{\baselineskip}
\section*{Introduction}

Anabelian geometry of algebraic curves originated from Grothendieck's conjecture that the geometric and algebraic structures of a hyperbolic algebraic curve can be reconstructed from its \'etale fundamental group.
Initially, it was thought that finitely generated fields over the rational number field would be the suitable base fields for such curves.
However, following the resolution of Grothendieck's conjecture, Mochizuki gave a $p$-adic interpretation, for hyperbolic curves over sub-$p$-adic fields (cf.\ \S 0), of Grothendieck's conjecture (cf.\ \cite{Mochizuki}).
Furthermore, the theory is now being extended to so-called Kummer-faithful fields (cf.\ Definition \ref{Def of KF}), introduced by Mochizuki, for which Kummer theory works effectively in the reconstruction process in anabelian geometry (Note that sub-$p$-adic fields are Kummer-faithful (cf.\ \cite{MochizukiIII})).
In fact, Hoshi showed Grothendieck's conjecture for affine hyperbolic algebraic curves over Kummer-faithful fields (cf.\ \cite{Hoshi}).
Kummer-faithful fields are expected to be suitable base fields for anabelian geometry. 
This raises the question of what kinds of Kummer-faithful fields that are not sub-$p$-adic fields may exist.
Although the resolution of this question is not necessarily straightforward, there are, for example, Ohtani's construction (cf.\ \cite{Ohtani}) and Murotani's result on the Kummer-faithfulness of mixed-characteristic higher local fields (cf.\ \cite{Murotani2}).
In addition, it has been pointed out that Kummer-faithful fields are closely related to the classical finiteness problem for torsion points on abelian varieties (cf.\ \cite{Ozeki}), and are therefore of independent interest in number theory.
In order to characterize Kummer-faithfulness in terms of Galois representations, Ozeki and Taguchi defined a notion of (quasi-)highly Kummer-faithful fields (cf.\ Definition \ref{Def of HKF}) as a variant of Kummer-faithful fields (cf.\ \cite{Ozeki-Taguchi}). 
These fields are closely related to class field theory and are useful in number theory.
For example, by using highly Kummer-faithful fields, one can systematically construct infinite algebraic extensions of the rational number field that are Kummer-faithful.
However, it is known that sub-$p$-adic fields are not highly Kummer-faithful in general. 
This led Ozeki and Taguchi to pose the following question (cf.\ \cite[Question]{Ozeki-Taguchi}):
\begin{center}
  Are sub-$p$-adic fields quasi-highly Kummer-faithful?
\end{center}
Here, we shall say that a perfect field $k$ is quasi-highly Kummer-faithful if any finite extension $K/k$ satisfies the following condition:
\begin{itemize}
  \item[$(\dagger)$] For any proper smooth variety $X$ over $K$, every prime number $\ell$ distinct from characteristic of $K$, every integer $i$ with $0<i\leq 2\dim X$, it holds that 
  $$H^i(X_{\overline{K}},\mathbb{Q} _{\ell})_{G_K}=0$$ 
  --- where $G_K$ is the absolute Galois group of $K$.
\end{itemize}
It was shown by Jannsen that, if $K$ is an MLF (cf.\ \S 0), then $K$ satisfies the condition $(\dagger)$, assuming the monodromy weight conjecture and the $p$-adic monodromy weight conjecture (cf.\ \cite{Jannsen}).
In this paper, we mainly consider the following two problems:
\begin{enumerate}[label=(\Alph*)]
\item Do sub-$p$-adic fields satisfy the condition $(\dagger)$ without assuming the monodromy weight conjecture and the $p$-adic monodromy weight conjecture (i.e., are sub-$p$-adic fields quasi-highly Kummer-faithful)?
\item For what classes of fields do (quasi-)highly Kummer-faithful fields provide a characterization of Kummer-faithful fields in terms of Galois representations?
\end{enumerate}
These problems are natural for the following reasons:
(A) Sub-$p$-adic fields are among the most fundamental base fields in anabelian geometry. 
It is therefore natural to ask whether they are quasi-highly Kummer-faithful, a notion closely related to Kummer-faithful fields, which are expected to be suitable base fields in anabelian geometry.
(B) Although (quasi-)highly Kummer-faithful fields were introduced with the aim of characterizing Kummer-faithful fields in terms of Galois representations, they are expected to form a class that is ``slightly larger" than that of Kummer-faithful fields. 
It is important to determine as large a class of fields as possible for which (quasi-)highly Kummer-faithfulness is equivalent to Kummer-faithfulness. 
This is important not only because the former provides a characterization of the latter in terms of Galois representations, but also because it helps clarify the difference between the two notions.

We consider for (A).
First, we study \'etale cohomology over MLF's and improve Jannsen's calculations (cf.\ Theorems \ref{Jannsen Thm4.2}, \ref{Jannsen Thm5.3}), which were obtained without assuming the monodromy weight conjecture or the $p$-adic monodromy weight conjecture:
\begin{introthm}[Theorems \ref{vanishing over MLF}, \ref{p-adic vanishing over MLF}]
  Let $k$ be an MLF, $G_k$ the absolute Galois group of $k$, $I_k$ the inertia subgroup of $k$, $p$ the characteristic of the residue field of $k$, $\ell$ a prime number distinct from $p$, and $X$ a $d(\geq 1)$-dimensional proper smooth variety.   
  Then the following assertions hold:
  \begin{enumerate}[label=(\roman*)]
  \item The $G_{\underline{k}}$-representation $H^i(X_{\overline{k}},\mathbb{Q} _\ell)^{I_k}$ is mixed with weights in
        \[
        \begin{cases}
          [\max(0, 2i - 2d), \, \max(0, \min(2i - 1, 2d - 1))] & \text{if } i \neq 2d, \\
          \{2d\} & \text{if } i = 2d.
        \end{cases}
        \]
  \item  The $G_{\underline{k}}$-representation $H^i(X_{\overline{k}},\mathbb{Q} _\ell)_{I_k}$ is mixed with weights in
        \[
        \begin{cases}
          \{0\} & \text{if } i=0,\\
          [\min(2d, \max(1, 2i - 2d + 1)), \,  \min(2i, 2d)] & \text{if } i \neq 0 .
        \end{cases}
        \]
    \end{enumerate}
  Moreover, let $\square \in \{\ell, p\}$. 
  Then 
  $$H^i(X_{\overline{k}},\mathbb{Q} _{\square}(r))^{G_k}=0$$
  holds for $i\neq 2d$ and $r\notin \left[\max(0, i - d), \, \max(0, \min(i - 1, d - 1))\right]$. 
  In particular, for $i\neq 0$, we have 
  $H^i(X_{\overline{k}},\mathbb{Q} _{\square})_{G_k}=0$. 
  Thus, any MLF is quasi-highly Kummer-faithful. 
\end{introthm}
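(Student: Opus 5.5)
The plan is to reduce to the strictly semistable case, read off weights from the Rapoport--Zink weight spectral sequence, and get the sharper bounds by analysing $N$ at the extremal end of that sequence.

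\textbf{Reduction and setup.} By de Jong's alterations there is a finite extension $k'/k$ and a proper strictly semistable model $\mathcal{X}'$ over $\mathcal{O}_{k'}$, together with a generically finite map $X'=\mathcal{X}'_{k'}\to X_{k'}$. By the trace argument, $H^i(X_{\overline{k}},\mathbb{Q}_\ell)$ is a $G_{k'}$-stable direct summand of $H^i(X'_{\overline{k}},\mathbb{Q}_\ell)$. Passing from $I_k$ to the open subgroup $I_{k'}$ only enlarges invariants and coinvariants, since taking invariants under the finite quotient is exact over $\mathbb{Q}_\ell$. So weight bounds for $H^i(X')^{I_{k'}}$ give the same bounds for $H^i(X)^{I_k}$, and we may assume $X$ has strictly semistable reduction. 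Then $I_k$ acts unipotently through $N$, the invariants are $\Ker N$, and the coinvariants are $\Coker N$.

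\textbf{Weight spectral sequence.} Use the Rapoport--Zink weight spectral sequence
\[
E_1^{-r,i+r}=\bigoplus_{k\geq\max(0,-r)} H^{i-r-2k}\bigl(Y^{(2k+r+1)}_{\overline{\mathbb{F}}},\mathbb{Q}_\ell\bigr)(-r-k)\Rightarrow H^i(X_{\overline{k}},\mathbb{Q}_\ell).
\]
Here $Y^{(m)}$ is the disjoint union of $m$-fold intersections of special-fibre components, which is smooth proper of dimension $d-m+1$. By Deligne's Weil I this sequence degenerates at $E_2$, and $E_2^{-r,i+r}$ is pure of weight $i+r$.
\begin{itemize}
\item A term is nonzero only if $0\le i-r-2k\le 2(d-2k-r)$. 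This forces $|r|\le\min(i,2d-i)$.
\item Hence $H^i$ has weights in $[\max(0,2i-2d),\min(2i,2d)]$, and $N$ lowers the index $r$ by $2$.
\end{itemize}
For (i) it remains to show that $\Ker N$ has no graded piece of the top weight $\min(2i,2d)$ when $i\neq 2d$. For $i=2d$ everything is $\mathbb{Q}_\ell(-d)$ of weight $2d$, which gives the second case.

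The top graded piece sits in $r_0=\min(i,2d-i)$ with only $k=0$ contributing. It is $H^{i-r_0}(Y^{(r_0+1)})(-r_0)$, and $N$ maps it to the $r_0-2$ column via the identity on the $k=1$ summand. I would show directly that the induced map on $E_2$-terms is injective in this extremal case. The point is that the relevant $E_2$ term is a quotient of $H^0$ (or $H^{2\dim}$) of intersections, where the differentials are explicit restriction and Gysin maps. Injectivity there is combinatorial and does not need the monodromy weight conjecture. This is the step I expect to be the main obstacle: it must be verified carefully that a lift to $H^i$ of a class in that graded piece cannot be killed by $N$ modulo lower filtration. I would try first to write $N$ on the $k=0$ and $k=1$ summands explicitly in the extremal degrees.

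\textbf{Coinvariants and vanishing.} Part (ii) follows from (i) by Poincaré duality. Since
\[
H^i_{I_k}\cong\bigl((H^{2d-i})^{I_k}\bigr)^\vee(-d),
\]
the weights of $H^i_{I_k}$ are $2d$ minus those of $(H^{2d-i})^{I_k}$. This yields exactly $[\max(1,2i-2d+1),\min(2i,2d)]$ for $i\ne0$, and $\{0\}$ for $i=0$.

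For the vanishing with $\square=\ell$: a nonzero $G_k$-invariant of $H^i(r)$ gives a Frobenius-fixed vector in $H^i(-)^{I_k}$ of weight $2r$. So $2r$ must lie in the interval of (i), which for integer $r$ means $r\in[\max(0,i-d),\max(0,\min(i-1,d-1))]$. For the coinvariants, dualize: $H^i_{G_k}=0$ iff $(H^{2d-i}(d))^{G_k}=0$. This holds because $d$ lies outside the range computed for $2d-i$ whenever $i\neq0$; for $i=2d$ it holds because $\mathbb{Q}(-d)$ has no coinvariants.

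For $\square=p$, replace the spectral sequence by the $p$-adic weight spectral sequence of Mokrane on log-crystalline (Hyodo--Kato) cohomology. Via $C_{\mathrm{st}}$ and the semistable comparison, $H^i(X_{\overline{k}},\mathbb{Q}_p(r))^{G_k}$ embeds in $D_{\mathrm{st}}^{N=0,\varphi=p^{r}}$. The same weight and extremal-$N$ analysis, with Frobenius weights from Katz--Messing, then gives the vanishing. Quasi-high Kummer-faithfulness follows since the argument applies to every finite extension of an MLF.
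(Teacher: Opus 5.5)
Your reduction steps are sound. The alteration/trace reduction, the range $[\max(0,2i-2d),\min(2i,2d)]$ read off from the weight spectral sequence, and the reduction of (i) to injectivity of the graded map $N\colon \gr^W_{\min(2i,2d)}\to\gr^W_{\min(2i,2d)-2}(-1)$ all work; that reduction holds for $0<i<2d$, since at $i=0$ the top weight $0$ does occur. The passage from (i) to (ii), to the vanishing of invariants, and to the coinvariants is also fine.

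\textbf{The gap.} That injectivity is the heart of the statement, and you assert it rather than prove it. The reason you give (``combinatorial'') is only accurate for $i=d$. For $i<d$, the top piece is $\ker\rho\cap\ker\gamma\subseteq H^0(\mathfrak{X}^{(i)})$, where $\mathfrak{X}^{(i)}$ has dimension $d-i>0$ and $\gamma$ is the Gysin map to $H^2(\mathfrak{X}^{(i-1)})$. Moreover, $N(x)=0$ in $E_2$ means $x=\rho(f)$ for some $f\in H^0(\mathfrak{X}^{(i-1)})$. Whether $\gamma(x)=0$ depends on linear relations among divisor classes, not on the dual complex. For $i>d$, the relevant differentials involve $H^{2\dim-2}$ of $\mathfrak{X}^{(2d-i-1)}$, so they are not combinatorial either. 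Your $p$-adic part (``the same extremal-$N$ analysis'') inherits the same gap.

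\textbf{What the paper does instead.} The paper performs the extremal computation only in the middle degree, where $\mathfrak{X}^{(d)}$ is zero-dimensional. There $E_2^{-d,2d}=\ker(\mathrm{Gys}\colon H^0(\mathfrak{X}^{(d)})\to H^2(\mathfrak{X}^{(d-1)}))$. The map $\mathrm{Gys}$ is the transpose of $\mathrm{Res}\colon H^0(\mathfrak{X}^{(d-1)})\to H^0(\mathfrak{X}^{(d)})$, with respect to Poincar\'e duality on the curves $\mathfrak{X}^{(d-1)}$ and the standard inner product on functions on the finite set $\mathfrak{X}^{(d)}$. Positive definiteness on the $\mathbb{Q}$-structure gives $\mathrm{im}\,\mathrm{Res}\cap\ker\mathrm{Gys}=0$, hence $N^d\colon E_2^{-d,2d}\xrightarrow{\sim}E_2^{d,0}$ (Proposition \ref{isom of N^d}; the crystalline case is identical). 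All other degrees then follow by Lemma~\ref{key lem of vanishing}: weak Lefschetz for a smooth hyperplane section with induction on $d$ handles $i<d$, and hard Lefschetz handles $i>d$. This is also why the $p$-adic case needs only the middle-degree crystalline computation plus $C_{\mathrm{st}}$.

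\textbf{How to keep your direct route.} For $i<d$ the missing input is a polarization. Work on a projective model (de Jong provides one) with hyperplane class $\eta$. The pairing $\langle u,v\rangle=\int_{\mathfrak{X}^{(i)}}u\,v\,\eta^{d-i}$ on $H^0(\mathfrak{X}^{(i)})$ is positive definite on the $\mathbb{Q}$-structure. The projection formula gives $\langle\rho f,x\rangle=\pm\int_{\mathfrak{X}^{(i-1)}}f\,\eta^{d-i}\,\gamma(x)$, so $x\in\mathrm{im}\,\rho\cap\ker\gamma$ forces $x=0$. For $i>d$, use hard Lefschetz rather than analysing $H^{2\dim}$ directly.
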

From the properties of Galois representations, it is immediate that the vanishing of the invariants and coinvariants of \'etale cohomology over a field $k$ also hold for \'etale cohomology over any subfield of $k$.
On the other hand, we show that the vanishing of the invariants and coinvariants of \'etale cohomology over a field $k$ also propagates to \'etale cohomology over finitely generated extensions of $k$.
In particular, if a perfect field $K$ is finitely generated over a highly Kummer-faithful field (resp.\ a quasi-highly Kummer-faithful field), then $K$ is highly Kummer-faithful (resp.\ quasi-highly Kummer-faithful).
As a result, we give an affirmative answer to the question posed by Ozeki and Taguchi:
\begin{introthm}[Proposition \ref{f.g. over HKF}, Theorem \ref{QHKFness of sub-p-adic fields}]
  Let $k$ be a field. 
  If any finite extension $K/k$ satisfies the condition $(\dagger)$, then any finitely generated extension of $k$ also satisfies the condition $(\dagger)$. 
  Thus, any sub-$p$-adic field is quasi-highly Kummer-faithful. 
\end{introthm}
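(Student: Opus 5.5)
The proof has two parts. First, from $(\dagger)$ for all finite extensions of $k$, deduce $(\dagger)$ for every finitely generated extension $L/k$. Second, combine this with the theorem for MLF's to handle sub-$p$-adic fields.

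\textbf{Reduction to the finite case.} Let $L/k$ be finitely generated. Choose a geometrically integral variety $S$ over a finite extension $K$ of $k$ (the algebraic closure of $k$ in $L$) with function field $L$. Let $X$ be proper smooth over $L$. After shrinking $S$, $X$ spreads out to a proper smooth morphism $f\colon \mathcal{X}\to S$. Then $R^if_*\mathbb{Q}_\ell$ is a lisse sheaf whose generic stalk is $H^i(X_{\overline{L}},\mathbb{Q}_\ell)$. The action of $G_L$ on it factors through $\pi_1(S)$.

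Pick a closed point $s\in S$, and replace $K$ by its residue field $\kappa(s)$, a finite extension of $k$. This gives a section of $\pi_1(S_{\kappa(s)})\to G_{\kappa(s)}$, and through it the stalk at $s$, namely $H^i(\mathcal{X}_{\bar s},\mathbb{Q}_\ell)$, carries the $G_{\kappa(s)}$-action coming from the proper smooth variety $\mathcal{X}_s$ over $\kappa(s)$. By hypothesis $(\dagger)$ holds for $\kappa(s)$, so $H^i(\mathcal{X}_{\bar s},\mathbb{Q}_\ell)_{G_{\kappa(s)}}=0$.

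Coinvariants under a group surject onto coinvariants under any quotient, and vanish if they vanish for a subgroup acting through it. Concretely:
\[
V_{\pi_1(S)}
\]
is a quotient of the coinvariants under the image of $G_{\kappa(s)}$. Hence $V_{\pi_1(S)}=0$.

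We actually need $V_{G_L}=0$. Here $G_L$ surjects onto $\pi_1(S)$ only up to the geometric part: $G_L\to\pi_1(S)$ is surjective because $S$ is normal. So $V_{G_L}=V_{\pi_1(S)}=0$.

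For a general finite extension $L'/L$, repeat the argument with $L'$, which is again finitely generated over $k$. Alternatively, we can pass to the algebraic closure of $k$ inside $L'$.

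\textbf{Main obstacle.} The delicate step is making the specialization compatible with Galois actions: the identification of stalks and the surjectivity $G_L\twoheadrightarrow\pi_1(S)$ for the lisse sheaf. This is standard via smooth and proper base change. One should also ensure that $\kappa(s)$ is a finite extension of $k$ (true for closed points of a finite type $K$-scheme) and that the dimension range $0<i\le 2\dim X$ is preserved, since $\dim\mathcal{X}_s=\dim X$. In positive characteristic the perfectness assumption is not needed for this argument, but we keep $\ell\neq\operatorname{char}$ throughout.

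\textbf{Sub-$p$-adic fields.} Let $F$ be sub-$p$-adic, so $F\subset L$ with $L$ finitely generated over $\mathbb{Q}_p$. Any finite extension $F'/F$ embeds into a finite extension $L'$ of $L$, which is still finitely generated over $\mathbb{Q}_p$. Since $\mathbb{Q}_p$ is an MLF, the theorem for MLF's shows that all its finite extensions satisfy $(\dagger)$. By the first part, $L'$ then satisfies $(\dagger)$.

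Now take $X$ proper smooth over $F'$. Its base change $X_{L'}$ has the same cohomology, and $G_{L'}\to G_{F'}$ is injective after fixing compatible algebraic closures. Coinvariants under the larger group $G_{F'}$ are a quotient of coinvariants under its subgroup $G_{L'}$, so
\[
H^i(X_{\overline{F'}},\mathbb{Q}_\ell)_{G_{F'}}=0.
\]
Hence $F$ is quasi-highly Kummer-faithful.
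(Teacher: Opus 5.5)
Your proof is correct and follows essentially the same route as the paper: you spread $X$ out to a proper smooth family over a normal model of $L$, observe that the $G_{\kappa(s)}$-coinvariants at a closed point surject onto the $\pi_1$-coinvariants of the lisse sheaf $R^if_*\mathbb{Q}_\ell$, conclude via the surjectivity of $G_L\to\pi_1$, and then obtain the sub-$p$-adic case from the MLF theorem together with passage to subfields. One harmless slip: $G_{L'}\to G_{F'}$ need not be injective (for instance when $L'/F'$ is transcendental), but you only use that the $G_{L'}$-action factors through its image in $G_{F'}$, so the $G_{L'}$-coinvariants still surject onto the $G_{F'}$-coinvariants; likewise, ``geometrically integral'' for $S$ should just be ``normal integral'' when $L$ is inseparable over $K$.
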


We consider for (B).
Murotani gave equivalent conditions for an algebraic extension of an FF (cf.\ \S 0) to be Kummer-faithful or torally Kummer-faithful (cf.\ Definition \ref{Def of KF}) (cf.\ \cite{Murotani1}).
We first consider a highly Kummer-faithful analogue of this result and obtain the following theorem:
\begin{introthm}[Proposition \ref{vanishing of coinvariants over alg ext over FF}, Theorem \ref{HKFness over alg ext over FF}]
  For a prime number $p$, let $\mathbb{F} /\mathbb{F} _p$ be an algebraic extension, $G_{\mathbb{F} }$ the absolute Galois group of $\mathbb{F} $, and $\ell $ a prime number distinct from $p$. 
  Then the following conditions are equivalent:
  \begin{enumerate}[label=(\roman*)]
    \item The maximal pro-$\ell$ quotient of $G_{\mathbb{F} }$ is nontrivial.
    \item For any proper smooth variety $X$ over $\mathbb{F} $ and every pair of integers $i,r$ with $i\neq 2r$, it holds that $H^i(X_{\overline{\mathbb{F} }},\mathbb{Q} _{\ell}(r))^{G_{\mathbb{F} }}=0.$ 
    \item For any proper smooth variety $X$ over $\mathbb{F} $ and every integer $i\neq 0$, it holds that $H^i(X_{\overline{\mathbb{F} }},\mathbb{Q} _{\ell})_{G_{\mathbb{F} }}=0$. 
  \end{enumerate}
  Moreover, the following conditions are equivalent:
  \begin{enumerate}[label=(\arabic*)]
    \item For every prime number $\ell$ distinct from $p$, the maximal pro-$\ell$ quotient of $G_{\mathbb{F} }$ is nontrivial. 
    \item $\mathbb{F}$ is highly Kummer-faithful. 
    \item $\mathbb{F}$ is quasi-highly Kummer-faithful. 
    \item $\mathbb{F}$ is torally Kummer-faithful. 
  \end{enumerate}
\end{introthm}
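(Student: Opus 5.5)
The plan is to reduce everything to the structure of $G_{\mathbb{F}}$ as a closed subgroup of $\hat{\mathbb{Z}}$, together with Deligne's theorem on the Frobenius eigenvalues of $H^i(X_{\overline{\mathbb{F}}},\mathbb{Q}_\ell)$. Since $X$ is defined over a finite subfield $\mathbb{F}_q\subset\mathbb{F}$, the cohomology is a $G_{\mathbb{F}_q}=\overline{\langle\Frob_q\rangle}\cong\hat{\mathbb{Z}}$-representation $V$, and $G_{\mathbb{F}}$ is the open-or-closed subgroup $H\subset\hat{\mathbb{Z}}$. Write $H=\prod_{\ell'}H_{\ell'}$ with $H_{\ell'}\subset\mathbb{Z}_{\ell'}$. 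The maximal pro-$\ell$ quotient of $H$ is $H_\ell$, so (i) says exactly that $H_\ell=\ell^{n}\mathbb{Z}_\ell$ is nonzero. Equivalently, $\ell^\infty\nmid[\mathbb{F}:\mathbb{F}_q]$ as a supernatural number.

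For (i)$\Rightarrow$(ii) and (i)$\Rightarrow$(iii), the $\ell$-adic representation $V(r)$ has its $\Frob_q$-action factoring through a quotient in which the $\mathbb{Z}_\ell$-part acts through the unipotent-times-pro-$\ell$ part. More concretely, a continuous $\hat{\mathbb{Z}}$-action on a $\mathbb{Q}_\ell$-vector space extends over $\mathbb{Z}_\ell$-powers only on a suitable open subgroup. I would use the following standard fact: for $\phi=\Frob_q$ acting on $V$, an element $h\in H$ acts as $\phi^{h}$, interpreted via $\phi=su$ with $s$ of finite prime-to-$\ell$ order times $\ell$-adically convergent powers. If $H_\ell\neq 0$, then $H$ contains an element $h$ whose image in $\mathbb{Z}_\ell$ is $\ell^n$ and whose prime-to-$\ell$ component is chosen to kill the finite-order part on $V$. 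Then $\phi^h$ has eigenvalues $\alpha^{\ell^n m}$ for suitable integers $m\neq 0$, where $\alpha$ ranges over the eigenvalues of $\phi$ on $V(r)$. By Deligne, these have absolute value $q^{(i-2r)\ell^n m/2}\neq 1$ when $i\neq 2r$. Hence $1$ is not an eigenvalue, so invariants vanish and, dually, coinvariants vanish; for coinvariants use $r=0$ and $i\neq 0$. The main technical obstacle is making this ``power by a profinite integer'' rigorous. I would do it by first passing to a finite extension so that $\phi$ lies in $1+\ell M_n(\mathbb{Z}_\ell)$ on a lattice, where $\phi^{a}$ makes sense for $a\in\mathbb{Z}_\ell$. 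The absolute values of the eigenvalues are then computed through $\log$, and stay nonzero multiples of $(i-2r)$.

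For the converses, assume $H_\ell=0$, so that $H\subset\prod_{\ell'\neq\ell}\mathbb{Z}_{\ell'}$. I would take $X=\mathbb{P}^1$ or an elliptic curve over $\mathbb{F}_q$. On $\mathbb{Q}_\ell(1)=H^2(\mathbb{P}^1)$, $\phi$ acts by $q^{-1}$, which as an element of $\mathbb{Z}_\ell^\times$ has the form (root of unity)$\cdot$(pro-$\ell$). After enlarging $q$, the character $\hat{\mathbb{Z}}\to\mathbb{Z}_\ell^\times$ factors through $\mathbb{Z}_\ell$, so $H$ acts trivially. This makes $H^2(\mathbb{P}^1,\mathbb{Q}_\ell(1))^{G_{\mathbb{F}}}\neq0$ with $i=2\neq 2r$ after twisting, for instance $r=0$ on $H^2(\mathbb{P}^1,\mathbb{Q}_\ell(1))=H^2(\mathbb{Q}_\ell)(1)$. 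It likewise makes $H^2(\mathbb{P}^1,\mathbb{Q}_\ell)_{G_{\mathbb{F}}}\neq 0$, which refutes (ii) and (iii).

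For the second set of equivalences, (1)$\Leftrightarrow$(ii),(iii) holds for all $\ell$. Highly and quasi-highly Kummer-faithful are then $(\dagger)$-type conditions over finite extensions. Finite extensions of $\mathbb{F}$ preserve (1), since $H_\ell\neq0$ is stable under open subgroups, so (1)$\Rightarrow$(2)$\Rightarrow$(3) follows from (ii),(iii) and the definitions. For (3)$\Rightarrow$(1), use the converse above via (iii). For (4), torally Kummer-faithful means $\mathbb{G}_m(K)$ has no nontrivial divisible elements, i.e.\ roots of unity of $\ell$-power order are finite in each finite $K/\mathbb{F}$. This is exactly $\mathbb{Q}_\ell(1)^{G_K}=0$ together with finiteness, which by the $\mathbb{P}^1$ computation is equivalent to $H_\ell\neq 0$. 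Alternatively I would invoke Murotani's criterion \cite{Murotani1} directly.
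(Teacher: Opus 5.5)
Your forward direction (i)$\Rightarrow$(ii),(iii) is correct, and it is essentially the paper's Lemma~\ref{key lem of Gal rep} made explicit. When $G_{\mathbb{F}}^{\ell}\neq 1$, the image of $G_{\mathbb{F}}$ contains an integral Frobenius power $\phi^{N\ell^{n}}$, where $N$ is the order of the prime-to-$\ell$ finite part. Purity then makes $\phi^{N\ell^{n}}-1$ invertible on $H^i(X_{\overline{\mathbb{F}}},\mathbb{Q}_\ell(r))$ for $i\neq 2r$, which kills invariants and coinvariants at once; no $\log$ is needed. The second block of equivalences is also fine, including your direct treatment of torally Kummer-faithfulness through $\mu_{\ell^\infty}(K)$ (the paper just cites Murotani). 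There, every condition is either insensitive to finite extensions of $\mathbb{F}$ or quantified over all of them.

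The gap is in the converse (ii),(iii)$\Rightarrow$(i) of the first block. Conditions (ii) and (iii) concern $\mathbb{F}$ itself, so you may not ``enlarge $q$'': that amounts to passing to $\mathbb{F}\mathbb{F}_{q^m}$. Without that step, your claim that $G_{\mathbb{F}}$ acts trivially on $\mathbb{Q}_\ell(1)$ is false. If $G_{\mathbb{F}}^{\ell}=1$, the cyclotomic character only maps $G_{\mathbb{F}}$ onto a finite group of some order $m\mid \ell-1$, and this group need not be trivial.

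For example, take $p=2$, $\ell=3$, $\mathbb{F}=\bigcup_n\mathbb{F}_{2^{3^n}}$. Then $G_{\mathbb{F}}^{3}=1$, but $\mathbb{F}_4\not\subset\mathbb{F}$. So $G_{\mathbb{F}}$ acts on $H^2(\mathbb{P}^1_{\overline{\mathbb{F}}},\mathbb{Q}_3)\cong\mathbb{Q}_3(-1)$ through a nontrivial quadratic character, and both its invariants and its coinvariants vanish. Separately, $H^2(\mathbb{P}^1_{\overline{\mathbb{F}}},\mathbb{Q}_\ell(1))$ is the excluded case $i=2r$; the relevant group is $H^2(\mathbb{P}^1_{\overline{\mathbb{F}}},\mathbb{Q}_\ell)\cong\mathbb{Q}_\ell(-1)$.

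For (ii) alone, a suitable twist would rescue you. Condition (iii) allows no twist, so you must absorb $m$ into the dimension. The group $H^{2m}(\mathbb{P}^m_{\overline{\mathbb{F}}},\mathbb{Q}_\ell)\cong\mathbb{Q}_\ell(-m)$ is the trivial $G_{\mathbb{F}}$-module. It therefore has nonzero invariants ($i=2m\neq 0=2r$) and nonzero coinvariants ($i=2m\neq 0$), which refutes both (ii) and (iii).

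The paper's own proof of (iii)$\Rightarrow$(i) makes the same move (``after replacing $\mathbb{F}$ by a finite extension'') and needs the same repair. Since it is phrased with $H^{2d}$ of an arbitrary $X$, choosing $X=\mathbb{P}^m$ supplies the fix.
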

Furthermore, Murotani showed that, for an MCDVF (cf.\ \S 0), if its residue field is pre-Kummer-faithful (cf.\ Definition \ref{Def of KF}), then the MCDVF itself is (also) Kummer-faithful (cf.\ \cite{Murotani2}).
Hence, as a quasi-highly Kummer-faithful analogue of this fact, we consider MCDVF's whose residue fields are highly Kummer-faithful.
By extending Theorem A to a quasi-finite-extension-like setting, we obtain the following theorem:
\begin{introthm}[Propositions \ref{l-adic vanishing over CDVF}, \ref{p-adic vanishing over MCDVF}, Theorem \ref{QHKFness over MCDVF}]
  Let $k$ be a CDVF, $\underline{k}$ the residue field of $k$, $G_{k}$ the absolute Galois group of $k$, $G_{\underline{k}}$ the absolute Galois group of $\underline{k}$, $\ell$ a prime number distinct from a prime number $p$, and $\square \in \{\ell, p\}$.  
  Assume that $\underline{k}/\mathbb{F} _{p}$ is an algebraic extension. 
  Consider the following conditions:
  \begin{enumerate}[label=(\roman*)]
    \item The maximal pro-$\square$ quotient of $G_{\underline{k}}$ is nontrivial.
    \item For any $d$-dimensional proper smooth variety $X$ over $k$, every integer $i\neq 2d$, and every integer $r\notin \left[\max(0, i - d), \, \max(0, \min(i - 1, d - 1))\right]$, it holds that 
          $H^i(X_{\overline{k}},\mathbb{Q} _{\square}(r))^{G_k}=0$. 
    \item For any proper smooth variety $X$ over $k$ and every integer $i\neq 0$, it holds that $H^i(X_{\overline{k}},\mathbb{Q} _{\square})_{G_k}=0$. 
    \item For any abelian variety $A$ over $k$, it holds that $V_{\square}(A)^{G_k}=0$. 
    \item For any semi-abelian variety $B$ over $k$, it holds that $V_{\square}(B)^{G_k}=0$. 
    \item $k$ is stably $\mu _{\square ^{\infty}}$-finite (cf.\ Definition \ref{Def of stably mu-finite}). 
  \end{enumerate}
  Then,
  \begin{itemize}
    \item $(i)\Leftrightarrow (ii)\Leftrightarrow (iii) \Leftrightarrow (iv)\Leftrightarrow (v)\Leftrightarrow (vi)$ holds if $\square = \ell$, 
    \item $(i)\Leftrightarrow (ii)\Leftrightarrow (iii)\Leftrightarrow (v)$ holds if $k$ is an MCDVF and $\square =p$.
  \end{itemize}
  Moreover, if $k$ is an MCDVF, then the following conditions are equivalent:
  \begin{enumerate}[label=(\arabic*)]
    \item $\underline{k}$ is quasi-finite. 
    \item $k$ is quasi-highly Kummer-faithful. 
    \item $k$ is Kummer-faithful. 
    \item $k$ is $\mathfrak{Primes}^{\infty}$-semi-AV-tor-finite (cf.\ Definition \ref{Def of semi-AV-tor-finite}). 
  \end{enumerate}
\end{introthm}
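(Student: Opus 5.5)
The plan is to treat the $\ell$-adic and $p$-adic cases separately, prove a chain of implications anchored at (i), and then derive (1)--(4) from the two chains applied to all primes.

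\textbf{The case $\square=\ell$.} I would prove the cycle (i)$\Rightarrow$(ii)$\Rightarrow$(iii)$\Rightarrow$(iv)$\Rightarrow$(i), and then attach (v) and (vi).

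For (i)$\Rightarrow$(ii), the tool is the inertia-invariant part of Theorem A, i.e.\ Theorem \ref{vanishing over MLF}. Its weight analysis should carry over from MLF's to a CDVF with residue field algebraic over $\mathbb{F}_p$. The key input is Grothendieck's quasi-unipotence of $I_k$ on $H^i(X_{\overline{k}},\mathbb{Q}_\ell)$. Together with de Jong alterations and the weight spectral sequence, this shows that $H^i(X_{\overline{k}},\mathbb{Q}_\ell)^{I_k}$ is a mixed $G_{\underline{k}}$-representation. Its weights lie in the stated interval, since Frobenius eigenvalues are defined over a finite subfield $\mathbb{F}_q\subset\underline{k}$.

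After twisting by $r$ outside the range, every weight becomes nonzero. So it suffices to show the following: for a pure weight $w\neq 0$ representation $V$ of $G_{\mathbb{F}_q}$, the invariants $V^{G_{\underline{k}}}$ vanish. Here $G_{\underline{k}}$ is an open-or-not closed subgroup of $\widehat{\mathbb{Z}}$ with nontrivial $\ell$-part, by (i). Hence it contains $\Frob_q^{n}$ for a family of $n$ whose $\ell$-adic closure generates a subgroup isomorphic to $\mathbb{Z}_\ell$. An invariant vector would then be fixed by an element whose eigenvalues are $q^{nw/2}$-Weil numbers, and that is impossible for $w\neq0$. The point needing care is the \emph{continuity} argument. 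An eigenvalue $\alpha$ fixed by a pro-$\ell$ group $\overline{\langle F^{m}\rangle}$ must satisfy $\alpha^{m\ell^j}\to1$ $\ell$-adically. This forces $\alpha$ to be a root of unity times a principal unit, contradicting $|\alpha|\neq1$.

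For (ii)$\Rightarrow$(iii), I would use Poincaré duality to convert coinvariants of $H^i$ into invariants of $H^{2d-i}(d)$, which is covered by (ii). The step (iii)$\Rightarrow$(iv) follows by taking $X=A$ and $i=1$, with duality again identifying $V_\ell(A)$ with $H^1(A_{\overline{k}},\mathbb{Q}_\ell)(1)$.

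For (iv)$\Rightarrow$(i), argue by contraposition. If the pro-$\ell$ quotient of $G_{\underline{k}}$ is trivial, take the Tate curve $E=\mathbb{G}_m/q^{\mathbb{Z}}$, so that $\mathbb{Q}_\ell(1)\subset V_\ell(E)$. The cyclotomic character on $G_k$ factors through $G_{\underline{k}}$ restricted to a degree-prime-to-$\ell$ piece. It has image a closed subgroup of $\mathbb{Z}_\ell^\times$ with trivial pro-$\ell$ part, hence finite. After a finite extension we get invariants. Since (iv) should be stable under finite extensions (by restriction--corestriction), this gives the contradiction.

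The remaining conditions are handled as follows. (v)$\Rightarrow$(iv) is trivial. For (i)$\Rightarrow$(v), use the extension $0\to T\to B\to A\to0$ and reduce to tori, where $V_\ell(T)=\mathbb{Q}_\ell(1)^{\oplus}$ up to finite twist and has weight $-2$. The equivalence (vi)$\Leftrightarrow$(i) is the statement that $\mu_{\ell^\infty}(K)$ is finite for all finite $K/k$, which holds exactly when the $\ell$-cyclotomic image is infinite.

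\textbf{The case $\square=p$, $k$ an MCDVF.} This is the main obstacle. The $p$-adic analogue of Theorem A (Theorem \ref{p-adic vanishing over MLF}) uses $D_{\mathrm{st}}$ and the Frobenius on $D_{\mathrm{pst}}$ in place of the inertia weights. The $G_k$-invariants embed into $(D_{\mathrm{cris}})^{\varphi=1}$-type data, and the slopes/weights of $\varphi$ come from Mokrane's $p$-adic weight spectral sequence after alteration. I would mimic the $\ell$-adic proof: the invariants of a twist under $G_k$ lie in the part of $D_{\mathrm{st}}^{N=0}$ where $\varphi^f$ has an eigenvalue which is a Weil number of nonzero weight. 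The nontrivial pro-$p$ quotient of $G_{\underline{k}}$ then kills it, by the same limit argument. The difficulty is that $k$ is not a finite extension of $\mathbb{Q}_p$. I would write $k$ as the completion of an increasing union of finite-residue subfields, over which $X$ is defined up to a spreading-out step, and invoke $p$-adic Hodge theory for complete discretely valued fields with perfect residue field.

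\textbf{Conditions (1)--(4).} These follow by applying the two chains for all primes: quasi-finiteness of $\underline{k}$ is exactly (i) for every prime. Murotani's results give (1)$\Leftrightarrow$(3)$\Leftrightarrow$(4), and (2)$\Rightarrow$(1) uses the Tate curve test from (iv)$\Rightarrow$(i).
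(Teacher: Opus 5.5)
\textbf{Main gap: the converse at $\square=p$.} For $\square=p$ the statement needs (v)$\Rightarrow$(i), and (2)$\Rightarrow$(1) needs $G_{\underline{k}}^p\neq 1$. Your only device for producing invariants from a trivial pro-$\square$ quotient is the Tate curve together with the cyclotomic character, and this cannot work at $p$. Since $\Char k=0$, the extension $k(\mu_{p^\infty})/k$ is infinitely ramified, so $\chi_p(G_{k'})$ is infinite for every finite $k'/k$. In fact $V_p(\mathbb{G}_m/q^{\mathbb{Z}})^{G_{k'}}=0$ always, whatever $\underline{k}$ is: the extension of $\mathbb{Q}_p$ by $\mathbb{Q}_p(1)$ is the nonzero Kummer class of $q$. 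So ``(2)$\Rightarrow$(1) uses the Tate curve test'' only gives $G_{\underline{k}}^\ell\neq 1$ for $\ell\neq p$, not quasi-finiteness. What you need is a test object whose $V_p$ contains an \emph{unramified} line on which Frobenius has infinite order. For example, take the canonical lift over $\mathbb{Z}_p$ of an ordinary elliptic curve over $\mathbb{F}_p$; then $V_p(E)\cong\mathbb{Q}_p(\chi_p\psi^{-1})\oplus\mathbb{Q}_p(\psi)$, with $\psi$ unramified and $\psi(\Frob_p)$ the unit root. If $G_{\underline{k}}^p=1$, then $\psi|_{G_k}$ has finite image, so $V_p(E)^{G_{k'}}\neq 0$ for some finite $k'/k$, and $\mathrm{Res}_{k'/k}E_{k'}$ brings this back to $k$. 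Use Weil restriction (Shapiro) here and also in your $\ell$-adic (iv)$\Rightarrow$(i). Restriction--corestriction does not transport nonvanishing of $H^0$, since corestriction on invariants is a trace, which can vanish. The paper gets this step from Ozeki's Lemma 3.2 applied to the maximal unramified subextension $k_0$ of $k/\mathbb{Q}_p$. Its (4)$\Rightarrow$(1) at $p$ also rests on this input, not on Murotani.

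\textbf{The forward direction (i)$\Rightarrow$(ii) at $p$ is also not set up correctly.} Writing $k$ as the completion of a union of finite-residue subfields and spreading $X$ out does not work. A proper smooth $X$ over the complete field $k$ is generally not defined over $\bigcup_n k_n$, and a nearby variety over some $k_n$ has no reason to carry the same $p$-adic $G_k$-representation. The reduction is also unnecessary. Tsuji's $C_{\mathrm{st}}$ and Mokrane's spectral sequence apply directly over $k$, because $\underline{k}$ is perfect. Only the SNCL special fibre has to be descended, to some $\mathfrak{X}_0$ over $\mathbb{F}_q$ (Nakayama). Crystalline base change then gives $E_2^{a,b}(\mathfrak{X}_{\underline{k}})\cong E_2^{a,b}(\mathfrak{X}_0)\otimes K_0(\underline{k})$, with $E_2^{a,b}(\mathfrak{X}_0)$ pure of weight $b$. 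Together with Proposition \ref{p-adic isom of N^d}, this controls $H^d(X_{\overline{k}},\mathbb{Q}_p)^{G_k}$ by the terms $E_2^{d-b,b}(\mathfrak{X}_{\underline{k}})^{\varphi=1}$ with $b\le 2d-1$.

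For infinite $\underline{k}$ there is no linear $\varphi^f$ whose eigenvalues you could inspect. The pro-$p$ hypothesis enters instead by rewriting $(D\otimes K_0(\underline{k}))^{\varphi=1}$ as $V^{G_{\underline{k}}}$, where $V=(D\otimes K_0(\overline{\mathbb{F}}_q))^{\varphi=1}$ is a $G_{\mathbb{F}_q}$-representation, and then applying the key lemma with $\ell=p$ (Lemma \ref{key lem of p-adic case}). Nonzero slopes contribute nothing automatically. It is the unit-root part of a pure isocrystal of nonzero weight that genuinely needs $G_{\underline{k}}^p\neq 1$, which is also why the Tate curve is the wrong test object.

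Your argument for the key lemma itself is also wrong as written. $G_{\underline{k}}$ need not contain any $\Frob_q^n$ with $n\neq 0$: take $\underline{k}=\bigcup_n\mathbb{F}_{q^{2^n}}$ and $\ell$ odd. And ``root of unity times principal unit'' gives no contradiction, since every $\ell$-adic unit, for instance $q$, has this form. The correct argument: on an open subgroup the action factors through the $\mathbb{Z}_\ell$-quotient, where $G_{\underline{k}}$ has open image. Hence a $G_{\underline{k}}$-fixed vector is fixed by some $\Frob_q^N$ with $N\ge 1$ (Lemma \ref{key lem of Gal rep}).
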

Ozeki gave a criterion for Kummer-faithfulness of infinite algebraic extensions of MLF's (cf.\ \cite{Ozeki}).
Theorem D gives a criterion for quasi-highly Kummer-faithful fields in situations like the completion of an infinite algebraic extension of an MLF.
Theorem D immediately implies a criterion for quasi-highly Kummer-faithfulness of infinite unramified extensions of MLF's:
\begin{introcor}[Corollary \ref{QHKFness of unram etx over MLF}]
  Let $k$ be an MLF, $K/k$ an unramified extension. 
  Then the following conditions are equivalent:
  \begin{enumerate}[label=(\roman*)]
    \item $K/k$ is a quasi-finite extension. 
    \item $K$ is quasi-highly Kummer-faithful. 
    \item $K$ is Kummer-faithful. 
    \item $K$ is $\mathfrak{Primes}^{\infty}$-semi-AV-tor-finite.  
  \end{enumerate}
\end{introcor}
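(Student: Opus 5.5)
The corollary follows from the MCDVF part of Theorem D, applied to the completion of $K$. The plan has four steps.

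First, set up the completion. Let $\widehat{K}$ be the completion of $K$. Since $K/k$ is unramified and algebraic, $K$ is a henselian discrete valuation field with the same uniformizer as $k$. Hence $\widehat{K}$ is a mixed-characteristic complete discrete valuation field, i.e.\ an MCDVF. Its residue field is $\underline{K}$, an algebraic extension of the finite field $\underline{k}$, hence of $\mathbb{F}_p$. So Theorem D applies to $\widehat{K}$.

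Second, move everything between $K$ and $\widehat{K}$. Since $K$ is henselian, Krasner's lemma gives the restriction isomorphism $G_{\widehat{K}} \xrightarrow{\sim} G_K$. Moreover, $\overline{K}\otimes_K\widehat{K}=\overline{\widehat{K}}$ is compatible with this identification. We will check that each of the three properties is unchanged when $K$ is replaced by $\widehat{K}$:
\begin{itemize}
\item \emph{Quasi-high Kummer-faithfulness.} Finite extensions of $K$ correspond to finite extensions of $\widehat{K}$ via $L\mapsto L\widehat{K}=\widehat{L}$. A proper smooth variety over $\widehat{L}$ need not descend to $L$, so the direction from $K$ to $\widehat{K}$ needs care. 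In the direction actually needed, however, any $X/L$ base changes to $\widehat{L}$. Smooth and proper base change identifies $H^i(X_{\overline{L}},\mathbb{Q}_\ell)$ with $H^i(X_{\overline{\widehat{L}}},\mathbb{Q}_\ell)$ as $G_L=G_{\widehat L}$-modules. So quasi-high Kummer-faithfulness of $\widehat{K}$ implies that of $K$.
\item \emph{Kummer-faithfulness.} Kummer-faithfulness of $\widehat{K}$ implies that of $K$, since the defining condition concerns semi-abelian varieties over finite extensions and rational points, and $L^\times\subset \widehat{L}^\times$, $A(L)\subset A(\widehat L)$.
\item \emph{Semi-AV-tor-finiteness.} This depends only on torsion points over $\overline{L}$ fixed by $G_L$. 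Torsion of a semi-abelian variety over $L$ is algebraic and Galois-equivariant. It therefore suffices to check the condition on semi-abelian varieties over $\widehat{L}$, which all come by base change, or else to argue directly.
\end{itemize}

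Third, close the cycle of implications, in the order (i)$\Rightarrow$(ii)$\Rightarrow$(iii)$\Rightarrow$(iv)$\Rightarrow$(i).
\begin{itemize}
\item (i)$\Rightarrow$(ii): If $K/k$ is quasi-finite, then $\underline{K}/\underline{k}$ has finite $\ell$-part of degree for each $\ell$, so $\underline{K}$ is quasi-finite. Theorem D then gives that $\widehat{K}$ is quasi-highly Kummer-faithful, and Step 2 transfers this to $K$.
\item (ii)$\Rightarrow$(iii): This is the general implication quasi-highly Kummer-faithful $\Rightarrow$ Kummer-faithful from Ozeki--Taguchi, recalled earlier in the paper.
\item (iii)$\Rightarrow$(iv): This is standard.
\item (iv)$\Rightarrow$(i): Suppose $K/k$ is not quasi-finite. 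Then for some prime $\ell$, the unramified extension $K$ contains the unramified $\mathbb{Z}_\ell$-extension of $k$. This holds for $\ell=p$ as well, since the extension is unramified.
  \begin{itemize}
  \item If $\ell\neq p$, then $K$ contains $\mu_{\ell^\infty}$, because $\underline{k}(\mu_{\ell^\infty})/\underline{k}$ has pro-$\ell$ part. So $\mathbb{G}_m$ has infinitely many $\ell$-power torsion points over $K$, contradicting (iv).
  \item If $\ell=p$, take a Tate curve or an elliptic curve with good ordinary reduction over $k$. Its reduction's $p$-power torsion becomes rational over the unramified $\mathbb{Z}_p$-extension, and it lifts via the étale part of the $p$-divisible group. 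Again (iv) fails.
  \end{itemize}
  Alternatively, (iv)$\Rightarrow$(i) can be obtained by applying Theorem D's (4)$\Rightarrow$(1) to $\widehat{K}$ after transferring (iv) upward, using that torsion points are algebraic, so $\widehat{K}$-rational torsion of a base-changed variety is $K$-rational.
\end{itemize}

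The main obstacle is Step 2: the comparison between $K$ and $\widehat{K}$, especially for $p$-adic cohomology and varieties over $\widehat{L}$ not defined over $L$. I would handle this by only ever transferring in the direction where base change from $K$ suffices. For the converse direction I would rely on the direct torsion-point argument in (iv)$\Rightarrow$(i).
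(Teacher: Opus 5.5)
Your steps (i)$\Rightarrow$(ii)$\Rightarrow$(iii)$\Rightarrow$(iv) are essentially the paper's: pass to $\widehat{K}$, apply the MCDVF theorem, come back down along $G_L=G_{\widehat{L}}$, then cite Ozeki--Taguchi. One correction: the Ozeki--Taguchi implication is not a general one. As recalled in the paper, it requires $K$ to be a Galois extension of a Kummer-faithful field. That holds here ($K\subseteq k^{\mathrm{ur}}$ and $k$ is sub-$p$-adic), but you must say so.

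\textbf{The gap: (iv)$\Rightarrow$(i) at $\ell=p$.} Here the paper simply quotes Ozeki's Lemma 3.2, while your direct argument does not work as stated.
\begin{itemize}
\item \emph{Tate curves never work.} For any finite extension $L$ of $K$ one has $E_q(L)=L^\times/q^{\mathbb{Z}}$. Its $p$-power torsion is an extension of a subgroup of $\mathbb{Z}/v_L(q)\mathbb{Z}$ by the finite group $\mu_{p^\infty}(L)$, hence finite, however large $\underline{L}$ is.
\item \emph{A general ordinary curve does not work either.} Over $\mathcal{O}_k$, $E[p^\infty]^{\mathrm{et}}$ is a \emph{quotient} of $E[p^\infty]$, not a subgroup. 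So points of $\widetilde{E}(\underline{K})[p^\infty]$ need not lift: the cokernel of $E(K)[p^n]\to\widetilde{E}(\underline{K})[p^n]$ sits in $\widehat{E}(\mathfrak{m}_K)/p^n\widehat{E}(\mathfrak{m}_K)$. Concretely, over a finite $K'/K$ trivializing the unramified characters, $T_pE$ is an extension of $\mathbb{Z}_p$ by $\mathbb{Z}_p(1)$. Its class is the Kummer class of the Serre--Tate parameter $q_E\in 1+\mathfrak{m}$. Unless $q_E$ is a root of unity this class is non-torsion, so $V_p(E)^{G_{K'}}=0$ and there is no contradiction with (iv).
\end{itemize}

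\textbf{How to repair it.} Use a curve whose $p$-divisible group splits over $\mathcal{O}_k$. For example, take the canonical lift over $\mathbb{Z}_p\subseteq k$ of an ordinary elliptic curve over $\mathbb{F}_p$. Then $\mathbb{Q}_p/\mathbb{Z}_p(\psi)$, with $\psi$ unramified, is a $G_k$-submodule of $E[p^\infty]$. When $\underline{K}$ contains the $\mathbb{Z}_p$-extension of $\underline{k}$, the group $G_{\underline{K}}$ has no nontrivial pro-$p$ quotient, so $\psi|_{G_K}$ has finite image. Hence $E(K')[p^\infty]$ is infinite for some finite $K'/K$, contradicting (iv). Alternatively, cite Ozeki's Lemma 3.2 as the paper does.

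\textbf{Your fallback via $\widehat{K}$ does not close the gap.} $\mathfrak{Primes}^{\infty}$-semi-AV-tor-finiteness of $\widehat{K}$ concerns semi-abelian varieties over $\widehat{L}$. These are \emph{not} all base changes from $L$, contrary to your Step 2: an elliptic curve with $j$-invariant in $\widehat{L}$ transcendental over $L$ is not.

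\textbf{A smaller slip at $\ell\neq p$.} $K$ need not contain $\mu_{\ell^\infty}$. For $\underline{k}=\mathbb{F}_2$ and $\ell=3$, the $\mathbb{Z}_3$-extension of $\mathbb{F}_2$ contains no primitive cube root of unity. Only $K(\mu_\ell)$ does. This suffices, because the definition of $\ell^\infty$-semi-AV-tor-finiteness quantifies over finite extensions of $K$.
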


\vspace{\baselineskip}
\section*{Acknowledgments}
  I would like to express my deep gratitude to Seidai Yasuda, my supervisor, for his invaluable guidance.
  I would also like to express my deep gratitude to Yoshiyasu Ozeki for his helpful advice.

  \vspace{\baselineskip}
\section{Notations and conventions}
\noindent
\textbf{Numbers:}

We shall write
\begin{itemize}
  \item $\mathbb{Z} $ for the set of integers, 
  \item $\mathbb{Q} $ for the set of rational numbers, 
  \item $\mathbb{R} $ for the set of real numbers, 
  \item $\mathbb{C} $ for the set of complex numbers,
  \item $\mathfrak{Primes}$ for the set of prime numbers.
\end{itemize}
We shall write $\mathbb{Z}_{\geq 0}$ (resp.\ $\mathbb{Z}_{>0}$) for the set of nonnegative integers (resp.\ positive integers). 
For $a,b\in\mathbb{R}$, we shall write $[a,b]$ for 
$\{x\in \mathbb{R} \,|\, a\leq x\leq b\}.$\\

\noindent
\textbf{Fields:}

Let $k$ be a field.
Then we shall write
\begin{itemize}
  \item $k(t)$ for the $1$-variable rational function field over $k$, 
  \item $k(\!(t)\!)$ for the $t$-adic completion of $k(t)$, 
  \item $k^{\text{perf}}$ for the perfect closure of $k$, 
  \item $k^{\text{sep}}$ for the separable closure of $k$,
  \item $\overline{k}$ for the algebraic closure of $k$.
\end{itemize}
For $p\in \mathfrak{Primes}$ and $n\in \mathbb{Z} _{> 0}$, we shall write 
\begin{itemize}
  \item $\mathbb{Z} _p$ for the $p$-adic completion of $\mathbb{Z} $, 
  \item $\mathbb{Q} _p$ for the fraction field of $\mathbb{Z} _p$, 
  \item $\mathbb{C} _p$ for the $p$-adic completion of $\overline{\mathbb{Q} }_p$,
  \item $\mathbb{F} _{p^n}$ for the finite field of cardinality $p^n$.
\end{itemize} 
Let $k$ be a field.
We shall say that $k$ is an FF if $k$ is a finite field (where ``FF" is
understood as an abbreviation for ``Finite Field"). 
We shall say that $k$ is a CDVF (resp.\ a PCDVF, resp.\ an MCDVF) if $k$ is a complete discrete valuation field (resp.\ a positive-characteristic complete discrete valuation field, resp.\ an mixed-characteristic complete discrete valuation field) (where ``CDVF" (resp.\ ``PCDVF", resp.\ ``MCDVF") is
understood as an abbreviation for ``Complete Discrete Valuation Field" (resp.\ ``Positive-characteristic Complete Discrete Valuation Field", resp.\ ``Mixed-characteristic Complete Discrete Valuation Field")). 
We shall say that $k$ is a PLF (resp.\ an MLF) if $k$ is isomorphic to a finite extension of  $\mathbb{F} _p(\!(t)\!)$ (resp.\ $\mathbb{Q} _p$) for some $p\in \mathfrak{Primes}$ (where ``PLF" (resp.\ ``MLF") is understood as an abbreviation for ``Positive-characteristic Local Field" (resp.\ ``Mixed-characteristic Local Field")). 
We shall say that $k$ is a sub-$p$-adic field if $k$ is isomorphic to a subfield of a finitely generated field over $\mathbb{Q} _p$.
\\

\noindent
\textbf{Modules:}

Let $M$ be a $\mathbb{Z} $-module and let $p\in \mathfrak{Primes}$. 
Then we shall write $M_{\text{div}}$ (resp.\ $M_{p\text{-div}}$) for the set of divisible elements (resp.\ $p$-divisible elements) of $M$, i.e., 
\[
M_{\text{div}}=\bigcap _{n\in \mathbb{Z} _{>0}} nM, \ M_{p\text{-div}}=\bigcap _{n\in \mathbb{Z} _{>0}} p^nM .
\] 
For $n\in \mathbb{Z} _{>0}$, we shall write
\begin{itemize}
  \item $M[n]$ for the kernel of multiplication by $n$,
  \item $M[p^{\infty}]\overset{\mathrm{def}}{=}\bigcup _{n\in \mathbb{Z} _{>0}}M[p^n]$. 
\end{itemize}
Furthermore, assume that a group $G$ acts on $M$. 
Then we shall write $M^G$ (resp.\ $M_G$) for the invariants (resp.\ coinvariants) of $M$ under the action of $G$. 
For $r\in \mathbb{Z} $, we shall write $\mathbb{Q} _p(r)$ for the $r$-th Tate twist of $\mathbb{Q} _p$.
For $p \in \mathfrak{Primes}$ and semi-abelian variety $A$, we shall write $V_{p}(A)$ for the $p$-adic rational Tate module of $A$. 
\\

\noindent
\textbf{Profinite groups:}

Let $G$ be a profinite group and let $p\in \mathfrak{Primes}$. 
Then we shall write $G^p$ for the maximal pro-$p$ quotient of $G$. 
Let $k$ be a field. 
Then we shall write $G_k\overset{\mathrm{def}}{=}\Gal (k^{\text{sep}}/k)$ for the absolute Galois group of $k$. 
\\

\noindent
\textbf{Period rings:}

Let $\mathcal{O} _{\mathbb{C} _p}$ be the ring of integers of $\mathbb{C} _p$. 
Then we shall write $\mathcal{O} _{\mathbb{C} _p}^\flat $ for the inverse limit of the inverse system
\[{\mathcal{O} _{\mathbb{C} _p}}/p\xlongleftarrow{\Frob _p} {\mathcal{O} _{\mathbb{C} _p}}/p\xlongleftarrow{\Frob _p} {\mathcal{O} _{\mathbb{C} _p}}/p\xlongleftarrow{\Frob _p} \dots\]
--- where $\Frob _p$ is the $p$-th power Frobenius map. 
We shall write $A_{\text{inf}}\overset{\mathrm{def}}{=} W(\mathcal{O} _{\mathbb{C} _p}^\flat)$ for the Witt ring of $\mathcal{O} _{\mathbb{C} _p}^\flat $. 
For $\square \in \{\text{dR}, \text{crys}, \text{st}\}$, we shall write $B_{\square}$ for the $p$-adic period ring. 
\\

\noindent
\textbf{Schemes:}

Let $k$ be a field, $X$ a scheme over a ring $A$, $A\to k$ a homomorphism.
Then we shall write $X_{k}\overset{\mathrm{def}}{=}X\times _{\Spec A}\Spec k$ for the base change of $X$ to $k$.
We shall say that $X$ is a variety if $X$ is a separated geometrically integral scheme of finite type over a field. 
We shall say that $X$ is a SNCL-variety if $X$ is a simple normal crossing log variety (where ``SNCL" is understood as an abbreviation for ``Simple Normal Crossing Log"). 
Let $k$ be a field. 
Then we shall write $\mathbb{P} _k ^d$ (resp.\ $\mathbb{G} _m$) for the $d$-dimensional projective space over $k$ (resp.\ the multiplicative group scheme).  
\\

\noindent
\textbf{Cohomologies:}

Let $k$ be a field. 
Then, for any $d$-dimensional proper smooth variety $X$ over $k$, each $\ell \in \mathfrak{Primes}$ distinct from the characteristic of $k$, and each integer $i$ satisfying $0\leq i\leq 2d$, we shall write $H^i(X_{\overline{k}},\mathbb{Q} _{\ell})$ for the $i$-th $\ell$-adic \'etale cohomology. 
Let $\kappa$ be a field of characteristic $p>0$ and $K_0(\kappa)\overset{\mathrm{def}}{=}W(\kappa)[1/p]$ the fraction field of the Witt ring of $\kappa$. 
Then, for a $d$-dimensional proper smooth variety (resp.\ a $d$-dimensional proper SNCL-variety) $X$ over $\kappa$ and an integer $i$ satisfies $0\leq i\leq 2d$, we shall write $H_{\text{crys}} ^i (X/K_0(\kappa))$ (resp.\ $H_{\text{log-crys}} ^i (X/K_0(\kappa))$) for the $i$-th crystalline cohomology (resp.\ the $i$-th log crystalline cohomology). 

\vspace{\baselineskip}
\section{Weights in \'etale cohomology over MLF's}

In this section, we improve the Jannsen's calculation (cf.\ Theorem \ref{Jannsen Thm4.2}) on the weights of the inertia-invariants of $\ell$-adic \'etale cohomology over MLF's. 
As a result, we show the vanishing of the coinvariants of the $\ell$-adic \'etale cohomology under the action of the absolute Galois group.
Furthermore, we discuss the $p$-adic analogue and improve Jannsen's calculation (cf.\ Theorem \ref{Jannsen Thm5.3}).\\

In the remainder of this section, let $k$ be a CDVF.
Then we shall write
\begin{itemize}
    \item $\mathcal{O} _k$ for the ring of integers,
    \item $\pi _k$ for a uniformizer of $\mathcal{O} _k$,
    \item $\underline{k}$ for the residue field of $\mathcal{O} _k$,
    \item $\overline{\underline{k}}$ for the algebraic closure of $\underline{k}$,
    \item $p$ for the characteristic of $\underline{k}$,
    \item $\ell $ for a prime number distinct from $p$,
    \item $I_k$ for the inertia subgroup of $k$,
    \item $t_\ell: I_k \to  \mathbb{Z} _\ell(1)$, $\sigma \mapsto \left(\sigma \left(\pi_k ^{1/{\ell ^m}}\right)\middle/ \pi_k ^{1/{\ell ^m}}\right)_m$ for the canonical surjection,
    \item $\sigma _0$ for an element of $I_k$ such that $t_{\ell}(\sigma _0)$ topologically generates $\mathbb{Z} _{\ell}(1)$,
    \item $X$ for a proper smooth variety over $k$,
    \item $d$ for the dimension of $X$, 
    \item $\mathfrak{X} \to \Spec \mathcal{O} _k$ for a proper model of $X$,
    \item $i$ for an integer satisfying $0\leq i\leq 2d$,
    \item $r$ for an integer.
\end{itemize}
Furthermore, if $X$ has semi-stable reduction, then, for each integer $j\geq  0$, we shall write 
$$\mathfrak{X} _{\underline{k}}^{(j)}$$ 
for the disjoint union of all $(j+1)$-fold intersections of the distinct irreducible components of $\mathfrak{X} _{\underline{k}} $.  \\

Grothendieck's monodromy theorem implies that, after replacing $k$ by a finite separable extension, we may assume that the action of $I_k$ on $H^i(X_{\overline{k}},\mathbb{Q}_{\ell})$ is unipotent.
Define 
\[N\overset{\mathrm{def}}{=} \sum _{m=1} ^{\infty} (-1)^{m-1}\frac{(\sigma _0 -1)^m}{m}.\]
Then $N$ is the monodromy operator on $H^i(X_{\overline{k}},\mathbb{Q}_{\ell})$.
That is, there is a unique increasing filtration $M_{\bullet}$ on $H^i(X_{\overline{k}},\mathbb{Q}_{\ell})$, called the monodromy filtration, characterized by the following properties:
\begin{enumerate}
\item For every sufficiently large integer $j$, it holds that $M_j=H^i(X_{\overline{k}},\mathbb{Q}_{\ell})$ and $M_{-j-1}=0$.
\item For every integer $j$, it holds that $NM_j\subseteq M_{j-2}$.

\item For every nonnegative integer $j$, the induced map $N^j:\gr^M_jH^i(X_{\overline{k}},\mathbb{Q}_{\ell}) \to \gr^M_{-j}H^i(X_{\overline{k}},\mathbb{Q}_{\ell})$
is an isomorphism.
\end{enumerate}
Furthermore, if $k$ is a PLF or an MLF, then there is the following conjecture:
\begin{conj}[monodromy weight conjecture]\label{monodromy weight conjecture}
  The $G_{\underline{k}}$-representation $\gr ^M _jH^i(X_{\overline{k}},\mathbb{Q}_{\ell})$ is pure of weight $i+j$.
\end{conj}

\begin{rem}\label{remark of monodromy weight conjecture}
  This conjecture was solved by Ito in the case where $k$ is a PLF (cf.\ \cite{Ito}). 
  In the case where $k$ is an MLF, it was solved by Rapoport and Zink under the assumption that $d \leq 2$ (cf.\ \cite{Rapoport-Zink}).
\end{rem}

\begin{thm}[{\cite[Theorem 4.2]{Jannsen}}]\label{Jannsen Thm4.2}
  Let $k$ be a PLF or an MLF.
  Then the following assertions hold:
  \begin{enumerate}[label=(\roman*)]
    \item $H^i(X_{\overline{k}},\mathbb{Q} _\ell)^{I_{k}}$ is a mixed $G_{\underline{k}}$-representation with weights in $[\max (0, 2i-2d),\, \min (2i, 2d+2)]$.
    \item $H^i(X_{\overline{k}},\mathbb{Q} _\ell)_{I_{k}}$ is a mixed $G_{\underline{k}}$-representation with weights in $[\max (-2, 2i-2d),\, \min (2i, 2d)]$.
  \end{enumerate}
  Thus, for any $r\notin [\max (0, i-d), \min (i, d+1)]$, we have
  $$H^i(X_{\overline{k}},\mathbb{Q} _\ell(r))^{G_{k}}=0.$$ 
\end{thm}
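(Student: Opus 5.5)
Since the statement is Jannsen's Theorem 4.2, I would follow his strategy: use de Jong's alterations to reduce to semi-stable reduction, then read off weights from the Rapoport--Zink--Steenbrink weight spectral sequence and Deligne's Weil II.

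\textbf{Reduction to semi-stable reduction.} By de Jong, there is a finite extension $k'/k$ and a proper generically finite alteration $X'\to X_{k'}$ with $X'$ having a strictly semi-stable model over $\mathcal{O}_{k'}$. Pullback composed with trace makes $H^i(X_{\overline{k}},\mathbb{Q}_\ell)$ a $G_{k'}$-equivariant direct summand of $H^i(X'_{\overline{k}},\mathbb{Q}_\ell)$. Weights of $I$-invariants and $I$-coinvariants are insensitive to the following changes:
\begin{itemize}
\item passing to a finite extension, since $I_{k'}$ is open in $I_k$ and the $I$-action is quasi-unipotent, so invariants and coinvariants of $I_k$ are direct summands of those of $I_{k'}$;
\item passing to a direct summand.
\end{itemize}
So we may assume $\mathfrak{X}$ is strictly semi-stable and the $I_k$-action is unipotent with monodromy operator $N$.

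\textbf{Weight spectral sequence.} Use the Rapoport--Zink spectral sequence
\[
E_1^{-r,i+r}=\bigoplus_{q\ge \max(0,-r)} H^{i-r-2q}\bigl(\mathfrak{X}^{(r+2q)}_{\overline{\underline{k}}},\mathbb{Q}_\ell(-r-q)\bigr)\Rightarrow H^i(X_{\overline{k}},\mathbb{Q}_\ell).
\]
By Deligne, each $E_1^{-r,i+r}$ is pure of weight $i+r$, since the strata are proper smooth over a finite field. Hence $E_2=E_\infty$ by weights, and the induced filtration is a filtration by $G_{\underline{k}}$-subrepresentations with pure graded pieces.

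\textbf{Invariants and coinvariants.} Here $H^{I_k}=\ker N$ and $H_{I_k}=\operatorname{coker} N$, where $N$ is induced by the map $E_1^{-r,i+r}\to E_1^{-r+2,i+r-2}(-1)$, the identity on the overlapping summands. Following Jannsen, I would identify $\ker N$ with a subquotient of the terms $E_1^{-r,i+r}$ with $r\le 0$, essentially the $q=0$ pieces $H^{i-r}(\mathfrak{X}^{(r)})$ for $r\ge 0$. This gives weights $i-r$ with $\max(0,i-2d+2r)\le i-r\le i$.

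The cohomological ranges of the strata then bound the weights:
\begin{itemize}
\item $\dim\mathfrak{X}^{(j)}=d-j$, so $H^m(\mathfrak{X}^{(j)})$ has $0\le m\le 2(d-j)$;
\item the twist $-r-q$ contributes weight $2(r+q)$.
\end{itemize}
Optimizing over $r,q$ gives the interval $[\max(0,2i-2d),\min(2i,2d+2)]$ for invariants. Dually, for coinvariants (or via Poincaré duality, which exchanges $H^i$ and $H^{2d-i}(d)$ and swaps invariants with coinvariants), it gives $[\max(-2,2i-2d),\min(2i,2d)]$.

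\textbf{Consequence for twists.} $H^i(\mathbb{Q}_\ell(r))^{G_k}$ injects into $(H^{I_k}(r))^{G_{\underline{k}}}$, which vanishes unless weight $0$ occurs in $H^{I_k}(r)$. Its weights are shifted by $-2r$, so weight $0$ requires $2r\in[\max(0,2i-2d),\min(2i,2d+2)]$, that is $r\in[\max(0,i-d),\min(i,d+1)]$.

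The main obstacle is the bookkeeping in the third step: extracting sharp weight bounds for $\ker N$ and $\operatorname{coker} N$ from the spectral sequence without the monodromy weight conjecture. I would follow Jannsen's argument, bounding via the $E_1$-terms and not $\gr^M$.
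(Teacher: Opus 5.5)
\textbf{There is a genuine gap in your third step}, and that step carries all the content. You claim that $\ker N$ on $H^i(X_{\overline{k}},\mathbb{Q}_\ell)$ is a subquotient of the $E_1$-level kernel of $N$, i.e.\ of the pieces $H^{i-s}(\mathfrak{X}^{(s)}_{\overline{\underline{k}}},\mathbb{Q}_\ell)\subseteq E_1^{s,i-s}$ with $s\ge 0$. From this you conclude that $H^i(X_{\overline{k}},\mathbb{Q}_\ell)^{I_k}$ has weights $\le i$. That is false in general.

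What is true is weaker: the filtration induced on $\ker N$ has graded pieces embedding into $\ker(\gr N)$ computed on $E_2=E_\infty$, not on $E_1$. The map $N\colon E_1^{-r,i+r}\to E_1^{-r+2,i+r-2}(-1)$ is indeed injective for $r\ge 1$. The induced map on $E_2$ need not be: a $d_1$-cycle $x$ with $Nx\in\im d_1$ gives a nonzero class in its kernel. Ruling this out is the substance of the monodromy weight conjecture. Indeed, the bound you arrive at is exactly Corollary \ref{Jannsen Cor4.3}(i), which for an MLF is known only under Conjecture \ref{monodromy weight conjecture}. So your step would prove an open statement unconditionally.

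The paper's Propositions \ref{isom of N^d} and \ref{isom of N^d-1} show how much input is needed to get injectivity on $E_2$ even for the two extreme pieces: positivity of a combinatorial pairing, respectively Rapoport--Zink Satz 2.13.

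Your next sentence, that ``optimizing over $r,q$'' gives the upper bound $\min(2i,2d+2)$, is not derived from anything and contradicts the bound $\le i$ you had just claimed. The endpoint $2d+2$ cannot come out of the $E_1$-terms at all, since every $E_1$-term has weight $\le 2d$.

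\textbf{How to repair it.} The statement is weak enough that no analysis of $N$ is needed. A nonzero summand $H^{i-r-2q}(\mathfrak{X}^{(r+2q)}_{\overline{\underline{k}}},\mathbb{Q}_\ell(-r-q))$ of $E_1^{-r,i+r}$ requires $0\le i-r-2q\le 2(d-r-2q)$ with $q\ge\max(0,-r)$. These constraints force its weight $i+r$ into $[\max(0,2i-2d),\min(2i,2d)]$:
\begin{itemize}
\item lower bound $0$: use $r+q\ge 0$;
\item lower bound $2i-2d$: use $-r\le r+2q\le 2d-i$;
\item upper bound $2i$: use $r\le i-2q$;
\item upper bound $2d$: use $i+r\le 2d-2q$.
\end{itemize}
Hence all of $H^i(X_{\overline{k}},\mathbb{Q}_\ell)$ is mixed with weights in this interval, since its graded pieces are the $E_2$-terms, on which $I_k$ acts trivially. $H^{I_k}$ is a subrepresentation and $H_{I_k}$ a quotient, with induced filtrations, so both are mixed with weights in the same interval. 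That interval lies inside both intervals in (i) and (ii).

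Your reduction via alterations and your deduction of the vanishing of $H^i(X_{\overline{k}},\mathbb{Q}_\ell(r))^{G_k}$ then go through unchanged. The paper itself only cites Jannsen here, with no proof. Its Theorem \ref{vanishing over MLF} sharpens these bounds precisely by proving an $E_2$-level injectivity statement for $N^d$, which is the kind of input your sketch tacitly assumed.
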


\begin{cor}[{\cite[Corollary 4.3]{Jannsen}}]\label{Jannsen Cor4.3}
  Let $k$ be a PLF or an MLF.
  Assume that Conjecture \ref{monodromy weight conjecture} holds for $X$.
  Then the following assertions hold:
  \begin{enumerate}[label=(\roman*)]
    \item $H^i(X_{\overline{k}},\mathbb{Q} _\ell)^{I_{k}}$ is a mixed $G_{\underline{k}}$-representation with weights in $[\max (0, 2i-2d),\, i]$.
    \item $H^i(X_{\overline{k}},\mathbb{Q} _\ell)_{I_{k}}$ is a mixed $G_{\underline{k}}$-representation with weights in $[i,\, \min (2i, 2d)]$.
  \end{enumerate}
  Thus, for any $r\notin\left[\max (0, i-d), \, \frac{i}{2}\right]$, we have
  $$H^i(X_{\overline{k}},\mathbb{Q} _\ell(r))^{G_{k}}=0.$$ 
\end{cor}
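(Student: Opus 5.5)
The statement to prove is Corollary \ref{Jannsen Cor4.3}. I will deduce it from the monodromy-filtration description, using Conjecture \ref{monodromy weight conjecture} for the weight ranges and Theorem \ref{Jannsen Thm4.2} for the outer bounds.

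\textbf{Reductions.} Both assertions and the conclusion are insensitive to replacing $k$ by a finite separable extension, since weights are computed after restriction to an open subgroup. So by Grothendieck's monodromy theorem I may assume $I_k$ acts unipotently on $V=H^i(X_{\overline{k}},\mathbb{Q}_\ell)$. Then $\sigma_0$ topologically generates the action, which means
\[
V^{I_k}=\Ker N, \qquad V_{I_k}=\Coker N .
\]
One also checks that $N$ is $G_k$-equivariant as a map $V(1)\to V$, i.e.\ $N\colon V\to V(-1)$. This twist is what makes weights drop by $2$ when one applies $N$.

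\textbf{Weights of $\Ker N$ and $\Coker N$.} Use the standard Lefschetz-type structure of the monodromy filtration, which follows from properties (i)--(iii) via a decomposition into Jordan blocks (as $\mathfrak{sl}_2$-strings):
\begin{itemize}
\item $\Ker N$ is filtered with graded pieces that are quotients, hence subquotients, of $\gr^M_{-j}V$ for $j\ge 0$;
\item $\Coker N$ has graded pieces coming from $\gr^M_{j}V$ for $j\ge0$.
\end{itemize}
Concretely, $\Ker N\subseteq M_0$, and the map $\Ker N\cap M_{-j}\to \gr^M_{-j}V$ lands in the primitive-bottom part. Dually, $M_{-1}\subseteq \im N$.

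By Conjecture \ref{monodromy weight conjecture}, $\gr^M_{-j}V$ is pure of weight $i-j$. Hence $V^{I_k}$ is mixed with weights $\le i$, and dually $V_{I_k}$ is mixed with weights $\ge i$. Only the $\mathbb{Z}$-filtration and $G_{\underline{k}}$-equivariance of $M_\bullet$ are needed here; the latter holds by uniqueness of $M_\bullet$ and the equivariance of $N$ up to twist.

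\textbf{Combining with Theorem \ref{Jannsen Thm4.2}.}
\begin{itemize}
\item For (i): the lower bound $\max(0,2i-2d)$ comes from Theorem \ref{Jannsen Thm4.2}(i), and the upper bound is $i$. This is consistent because $2i-2d\le i$ for $i\le 2d$.
\item For (ii): the lower bound is $i$, and the upper bound $\min(2i,2d)$ comes from Theorem \ref{Jannsen Thm4.2}(ii).
\end{itemize}

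\textbf{Vanishing of invariants.} Since $G_k$ acts through $G_{\underline{k}}$ on $V^{I_k}$, we have
\[
V(r)^{G_k}=\bigl(V^{I_k}(r)\bigr)^{G_{\underline{k}}},
\]
and the Tate twist shifts weights by $-2r$. A nonzero Frobenius-fixed vector forces weight $0$. So nonvanishing requires $2r\in[\max(0,2i-2d),\,i]$, which gives the stated range $r\in[\max(0,i-d),\,i/2]$.

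\textbf{Main obstacle.} The main step is justifying that the graded pieces of $\Ker N$ and $\Coker N$ are subquotients of the $\gr^M_j$ with the correct sign of $j$. I would handle this through the $G_{\underline{k}}$-stable primitive decomposition $\gr^M_{-j}\supseteq N^j(P_j)$ together with property (iii).
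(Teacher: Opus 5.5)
Your argument is correct. $V^{I_k}=\Ker N\subseteq M_0$ and $M_{-1}\subseteq \im N$, together with Conjecture \ref{monodromy weight conjecture}, give the bound $\le i$ for invariants and $\ge i$ for coinvariants; Theorem \ref{Jannsen Thm4.2} supplies the other endpoints, and the twist argument yields the vanishing. (The passage to a finite extension $k'$ only needs the one-way implications coming from $V^{I_k}\subseteq V^{I_{k'}}$ and $V_{I_{k'}}\twoheadrightarrow V_{I_k}$.) The paper does not reprove this statement; it quotes it from \cite{Jannsen} as a consequence of Theorem \ref{Jannsen Thm4.2}, and your route is the natural derivation along exactly those lines.
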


Assume that $X$ has strictly semi-stable reduction.
Then there is the following $\ell$-adic weight spectral sequence (cf.\ \cite[Satz 2.10]{Rapoport-Zink}, \cite[Corollary 2.2.4]{Saito}):
 \[
  E_1^{a,b}
  = \bigoplus_{r \ge \max(0, -a)}
  H^{b-2r} ( \mathfrak{X} _{\overline{\underline{k}}}^{(a+2r)}, \mathbb{Q}_\ell(-r) )
  \Longrightarrow
  H^{a+b}( X_{\overline{k}}, \mathbb{Q}_\ell ) .
 \]
 If $k$ is a PLF or an MLF, then it follows immediately from the finiteness of the residue field that the following properties hold:
\begin{enumerate}
  \item The $G_{\underline{k}}$-representation $E_1^{a,b}$ is pure of weight $b$. 
  Thus, the $G_{\underline{k}}$-representation $E_2^{a,b}$ is also pure of weight $b$. 
  \item The filtration on $H^i(X_{\overline{k}},\mathbb{Q} _\ell)$ induced by the convergence of the $\ell$-adic weight spectral sequence gives the weight filtration on $H^i(X_{\overline{k}},\mathbb{Q} _\ell)$. 
  \item The $\ell$-adic weight spectral sequence degenerates at the $E_2$-page.
\end{enumerate}
Even if $k$ is a CDVF,  the $\ell$-adic weight spectral sequence degenerates at the $E_2$-page by \cite[Theorem 0.1]{Nakayama}.
Furthermore, there is a morphism between the following $\ell$-adic weight spectral sequences: 
\[
\begin{tikzcd}
E_1^{a,b}
=
\displaystyle
\bigoplus_{r \ge \max(0,-a)}
H^{\,b-2r}
(
\mathfrak{X}_{\overline{\underline{k}}}^{(a+2r)},
\mathbb{Q}_\ell(-r)
)
\arrow[r, Rightarrow]
\arrow[d, "\mathrm{id}\otimes t_\ell(\sigma _0)"]
&
H^{a+b}
(
X_{\overline{k}},
\mathbb{Q}_\ell
)
\arrow[d, "N"]
\\
E_1^{a+2,b-2}
=
\displaystyle
\bigoplus_{r \ge \max(0,-a)}
H^{\,b-2r}
(
\mathfrak{X}_{\overline{\underline{k}}}^{(a+2r)},
\mathbb{Q}_\ell(-r+1)
)
\arrow[r, Rightarrow]
&
H^{a+b}
(
X_{\overline{k}},
\mathbb{Q}_\ell
).
\end{tikzcd}
\]

\begin{prop}\label{isom of N^d}
   Assume that $d\geq 1$ and $X$ has strictly semi-stable reduction. 
   Then $N^d:E_2^{-d,2d}\to E_2^{d,0}$ is an isomorphism. 
\end{prop}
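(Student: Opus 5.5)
The plan is to compute both $E_2$-terms explicitly from the $E_1$-page, using that only the ``deepest stratum'' $\mathfrak{X}_{\overline{\underline{k}}}^{(d)}$ contributes. Then reduce the claim to a linear-algebra statement about the $d_1$-differentials, which are alternating sums of restriction and Gysin maps.

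\emph{Step 1: identify the $E_1$-terms.} Since $\mathfrak{X}_{\overline{\underline{k}}}^{(j)}$ is smooth proper of dimension $d-j$ (and empty for $j>d$), we have
\[
E_1^{-d,2d}=\bigoplus_{r\ge d}H^{2d-2r}(\mathfrak{X}_{\overline{\underline{k}}}^{(2r-d)},\mathbb{Q}_\ell(-r))=H^0(\mathfrak{X}_{\overline{\underline{k}}}^{(d)},\mathbb{Q}_\ell(-d)).
\]
Indeed, $r>d$ gives negative degrees. Similarly $E_1^{d,0}=H^0(\mathfrak{X}_{\overline{\underline{k}}}^{(d)},\mathbb{Q}_\ell)$. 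On these terms the iterated map $(\mathrm{id}\otimes t_\ell(\sigma_0))^d$ is the identity of $H^0(\mathfrak{X}_{\overline{\underline{k}}}^{(d)})$ up to the Tate twist, hence an isomorphism $E_1^{-d,2d}\to E_1^{d,0}$. Because the diagram above is a morphism of spectral sequences, this map commutes with $d_1$ and induces $N^d$ on $E_2$. It therefore suffices to show that it maps $E_2^{-d,2d}$ isomorphically onto $E_2^{d,0}$.

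\emph{Step 2: identify the $E_2$-terms.} The incoming term $E_1^{-d-1,2d}$ vanishes by the same degree count, so $E_2^{-d,2d}=\Ker(d_1\colon E_1^{-d,2d}\to E_1^{-d+1,2d})$. Here $E_1^{-d+1,2d}=H^2(\mathfrak{X}_{\overline{\underline{k}}}^{(d-1)},\mathbb{Q}_\ell(-d+1))$, and $d_1$ is (up to signs) the sum of Gysin maps from the points of $\mathfrak{X}^{(d)}$ into the curves of $\mathfrak{X}^{(d-1)}$ containing them. Dually, $E_1^{d+1,0}=0$, so $E_2^{d,0}=\Coker(d_1\colon H^0(\mathfrak{X}^{(d-1)})\to H^0(\mathfrak{X}^{(d)}))$, where $d_1$ is the signed sum of restriction maps.

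\emph{Step 3: linear algebra.} Identify $H^2$ of each curve with $\mathbb{Q}_\ell$ via the trace. Both differentials are then given by the same integer incidence matrix (with signs), the Gysin map being the transpose of the restriction map with respect to the standard bases of points and of components. Write $V=\mathbb{Q}^{\pi_0(\mathfrak{X}^{(d)})}$ with its standard positive-definite form, and let $\rho\colon\mathbb{Q}^{\pi_0(\mathfrak{X}^{(d-1)})}\to V$ be the restriction. Then $\Ker(\rho^{T})=(\im\rho)^{\perp}$. Since the form is positive definite over $\mathbb{Q}$, we get $V=\im\rho\oplus(\im\rho)^{\perp}$, so $(\im\rho)^\perp\to V/\im\rho$ is an isomorphism. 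Tensoring with $\mathbb{Q}_\ell$ gives exactly that the identity map of Step 1 induces $\Ker\xrightarrow{\sim}\Coker$, i.e.\ $N^d\colon E_2^{-d,2d}\to E_2^{d,0}$ is an isomorphism.

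The main obstacle is Step 3's bookkeeping. One must check that the sign conventions and the precise form of $d_1$ (from \cite{Rapoport-Zink}, \cite{Saito}) really make the Gysin differential the transpose of the restriction differential under the trace identification. Any nonzero rescaling of individual components would need care, but it does not affect the orthogonality argument as long as the relevant bilinear form stays definite on a $\mathbb{Q}$-structure. The Galois action plays no role here, since the statement only concerns the underlying vector spaces.
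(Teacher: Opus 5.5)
Your proposal is correct and follows the paper's own argument. The paper likewise reduces to showing that $\Ker(\mathrm{Gys})\to\Coker(\mathrm{Res})$ on $H^0(\mathfrak{X}_{\overline{\underline{k}}}^{(d)})$ is an isomorphism, uses that $\mathrm{Gys}$ and $\mathrm{Res}$ are dual, and concludes from the positive-definiteness of the standard inner product on the combinatorial $\mathbb{Q}$-structure; your decomposition $V=\im\rho\oplus(\im\rho)^{\perp}$ over $\mathbb{Q}$ followed by base change to $\mathbb{Q}_\ell$ is the same idea as the paper's ``injectivity plus nondegeneracy of $B$ on $\im\mathrm{Res}\times\im\mathrm{Res}$'' step, and your explicit $E_1$/$E_2$ computation fills in bookkeeping the paper leaves implicit.
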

\begin{proof}
  For simplicity, we omit Tate twists and coefficients. 
  It suffices to prove that the following natural map is an isomorphism:
  \[
  \Ker \left(H^0(\mathfrak{X} _{\overline{\underline{k}}}^{(d)})\xrightarrow{\mathrm{Gys}}H^2(\mathfrak{X} _{\overline{\underline{k}}}^{(d-1)})\right)\longrightarrow \Coker \left(H^0(\mathfrak{X} _{\overline{\underline{k}}}^{(d-1)})\xrightarrow{\mathrm{Res}}H^0(\mathfrak{X} _{\overline{\underline{k}}}^{(d)})\right)
  \]
  --- where $\mathrm{Gys}$ (resp.\ $\mathrm{Res}$) is the sum of the Gysin maps (resp.\ restriction maps), each multiplied by an appropriate sign $(\pm 1)$. 
  Since $\mathrm{Gys}$ and $\mathrm{Res}$ are dual to each other, it is enough to prove that the above map is injective.  
  Therefore, it suffices to prove that the nondegenerate bilinear pairing induced by the cup product
  $
  B: H^0(\mathfrak{X} _{\overline{\underline{k}}}^{(d)}) \times H^0(\mathfrak{X} _{\overline{\underline{k}}}^{(d)}) \to \mathbb{Q} _{\ell}
  $
  remains nondegenerate when restricted to $\im \mathrm{Res}\times \im \mathrm{Res}$. 
  Let $H^0(\mathfrak{X} _{\overline{\underline{k}}}^{(d)})_\mathbb{Q} $ (resp.\ $H^0(\mathfrak{X} _{\overline{\underline{k}}}^{(d-1)})_\mathbb{Q} $) be the space of $\mathbb{Q} $-valued locally constant functions on $\mathfrak{X} _{\overline{\underline{k}}} ^{(d)}$ (resp.\ $\mathfrak{X} _{\overline{\underline{k}}} ^{(d-1)}$).
  Let $\mathrm{Res}_\mathbb{Q}:H^0(\mathfrak{X} _{\overline{\underline{k}}}^{(d-1)})_\mathbb{Q} \to H^0(\mathfrak{X} _{\overline{\underline{k}}}^{(d)})_\mathbb{Q} $ be the natural morphism and  
  $B_\mathbb{Q}:H^0(\mathfrak{X} _{\overline{\underline{k}}}^{(d)})_\mathbb{Q} \times H^0(\mathfrak{X} _{\overline{\underline{k}}}^{(d)})_\mathbb{Q} \to \mathbb{Q} $ the standard inner product. 
  Then each of $\square \in \{H^0(\mathfrak{X} _{\overline{\underline{k}}}^{(d)}), H^0(\mathfrak{X} _{\overline{\underline{k}}}^{(d-1)}), \mathrm{Res}, B \}$ admits the corresponding combinatorial $\mathbb{Q} $-structure $\square_{\mathbb{Q}} $. 
  Since the restriction of $B_\mathbb{Q}$ to $\im \mathrm{Res} _{\mathbb{Q} }\times \im \mathrm{Res} _{\mathbb{Q} }$ is nondegenerate, the restriction of $B$ to $\im \mathrm{Res}\times \im \mathrm{Res}$ is also nondegenerate. 
\end{proof}

\begin{thm}\label{vanishing over MLF}
   Let $k$ be an MLF.
   Assume that $d\geq 1$. 
   Then the following  assertions hold:
  \begin{enumerate}[label=(\roman*)]
  \item The $G_{\underline{k}}$-representation $H^i(X_{\overline{k}},\mathbb{Q} _\ell)^{I_k}$ is mixed with weights in
        \[
        \begin{cases}
          [\max(0, 2i - 2d), \, \max(0, \min(2i - 1, 2d - 1))] & \text{if } i \neq 2d, \\
          \{2d\} & \text{if } i = 2d.
        \end{cases}
        \] 
  \item The $G_{\underline{k}}$-representation $H^i(X_{\overline{k}},\mathbb{Q} _\ell)_{I_k}$ is mixed with weights in
        \[
        \begin{cases}
          \{0\} & \text{if } i=0,\\
          [\min(2d, \max(1, 2i - 2d + 1)), \,  \min(2i, 2d)] & \text{if } i \neq 0.
        \end{cases}
        \]
    \end{enumerate}
  Thus, for $i\neq 2d$ and $r\notin \left[\max(0, i - d), \, \max(0, \min(i - 1, d - 1))\right]$, we have 
  $$H^i(X_{\overline{k}},\mathbb{Q} _\ell(r))^{G_k}=0.$$
  In particular, for $i\neq 0$, we have 
  $H^i(X_{\overline{k}},\mathbb{Q} _\ell)_{G_k}=0.$ 
\end{thm}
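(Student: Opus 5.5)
We first reduce to the case where $X$ has strictly semi-stable reduction. By de Jong's alterations there is a finite extension $k'/k$ and a proper smooth $X'$ over $k'$ with strictly semi-stable reduction, together with a generically finite surjection $X'\to X_{k'}$. The trace map makes $H^i(X_{\overline{k}},\mathbb{Q}_\ell)$ a $G_{k'}$-equivariant direct summand of $H^i(X'_{\overline{k}},\mathbb{Q}_\ell)$. Since $\dim X'=d$, and since passing to $I_{k'}\subseteq I_k$ and to the finite extension $\underline{k'}/\underline{k}$ only rescales Frobenius eigenvalues compatibly with weights, it suffices to prove (i) and (ii) for $X'$ over $k'$. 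Here $H^{I_k}\subseteq H^{I_{k'}}$, and the coinvariants are a quotient. We may also assume the inertia action is unipotent, so that $H^{I_k}=\Ker N$.

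For (i), we use the weight spectral sequence. It degenerates at $E_2$, and the induced filtration is the weight filtration with $\gr^W_b H^i=E_2^{i-b,b}$. The monodromy $N$ lowers weights by $2$ and induces $E_2^{a,b}\to E_2^{a+2,b-2}$ on graded pieces. Since $\gr^W_b(\Ker N)\subseteq \Ker(\gr^W_b N)$, it suffices to show that this graded $N$ is injective in the weights we must exclude. Jannsen's bound (Theorem \ref{Jannsen Thm4.2}) already gives weights in $[\max(0,2i-2d),\min(2i,2d+2)]$. The case $i=2d$ is trivial because $H^{2d}=\mathbb{Q}_\ell(-d)$, and the case $i=0$ gives weight $0$. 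So for $0<i<2d$ we must show:
\begin{itemize}
\item no weight $2i$ survives;
\item no weight $\geq 2d$ survives.
\end{itemize}
A count of dimensions in $E_1^{a,b}=\bigoplus_{r}H^{b-2r}(\mathfrak{X}^{(a+2r)}_{\overline{\underline{k}}})(-r)$, using $\dim \mathfrak{X}^{(j)}=d-j$, shows the following.
\begin{itemize}
\item Weight $b=2i$ occurs only in $E_2^{-i,2i}$, which is a quotient of $H^0(\mathfrak{X}^{(i)}_{\overline{\underline{k}}})(-i)$.
\item Weight $b>2d$ does not occur at all.
\item Weight $b=2d$ occurs only for $d\le i<2d$, in $E_2^{i-2d,2d}$, which is built from top-degree cohomology $H^{2(d-j)}(\mathfrak{X}^{(j)}_{\overline{\underline{k}}})$ of the strata.
\end{itemize}
All these extreme pieces are computed by purely combinatorial complexes. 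They are made of $H^0$'s of the strata and their Poincaré duals, linked by restriction and Gysin maps, and $N$ acts on them via the identity $\mathrm{id}\otimes t_\ell(\sigma_0)$ shifting the index $r$.

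The main step, and the one I expect to be the real obstacle, is proving injectivity of $N$ on these edge terms $E_2^{-i,2i}$ and $E_2^{i-2d,2d}$. I would imitate the proof of Proposition \ref{isom of N^d}, which is exactly the case $i=d$. Both source and target admit $\mathbb{Q}$-structures given by locally constant $\mathbb{Q}$-valued functions on the strata. Injectivity then reduces to the statement that the standard positive-definite inner product stays nondegenerate on the image of the relevant restriction map. This holds over $\mathbb{Q}$ because a positive-definite form restricts to a nondegenerate form on any subspace, and it transfers to $\mathbb{Q}_\ell$ by extension of scalars. For the top-degree pieces, I would pass to the $H^0$ description via Poincaré duality on each stratum, under which Gysin maps and restrictions are exchanged. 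This yields the upper bound $\max(0,\min(2i-1,2d-1))$ in (i).

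Part (ii) then follows formally by duality. Poincaré duality and the perfectness of the monodromy pairing give $H^i_{I_k}\cong \bigl((H^{2d-i}(d))^{I_k}\bigr)^\vee$. Hence the weights of $H^i_{I_k}$ are $2d-w$ for $w$ a weight of $H^{2d-i}$ in (i), which produces exactly the interval $[\min(2d,\max(1,2i-2d+1)),\min(2i,2d)]$, with the case $i=0$ corresponding to $H^{2d}$. For the consequences, $H^i(r)^{G_k}$ is the Frobenius-invariant part of $H^{I_k}(r)$, and it is nonzero only if $H^{I_k}$ has weight $2r$. Since weights are integers, (i) forces $r\in[\max(0,i-d),\max(0,\min(i-1,d-1))]$. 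Similarly, $H^i_{G_k}=(H^i_{I_k})_{\Frob}$ is nonzero only if weight $0$ occurs, which (ii) excludes for $i\ne0$.
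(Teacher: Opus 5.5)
\textbf{There is a genuine gap in the key injectivity step for $i\neq d$.}

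Most of your proposal is sound. The alteration step is fine, and so is passing from $\Ker N$ to the graded maps $\gr^W_b N$. Your $E_1$ bookkeeping correctly shows that only $E_2^{-i,2i}$ (for $0<i\le d$) and $E_2^{i-2d,2d}$ (for $d\le i<2d$) have to be killed. The duality giving (ii) and the deduction of the vanishing statements are also fine. One minor slip: since $E_1^{-i-1,2i}=0$, the term $E_2^{-i,2i}$ is a \emph{subspace} of $H^0(\mathfrak{X}^{(i)}_{\overline{\underline{k}}})(-i)$, namely $\Ker(\mathrm{Gys})\cap\Ker(\mathrm{Res})$, not a quotient.

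The gap is in the step you flagged: the argument of Proposition \ref{isom of N^d} does not transfer to $i\neq d$. Unwinding $d_1$, injectivity of $N$ on $E_2^{-i,2i}$ would follow from
\[
\Ker\bigl(\mathrm{Gys}\colon H^0(\mathfrak{X}^{(i)}_{\overline{\underline{k}}})\to H^2(\mathfrak{X}^{(i-1)}_{\overline{\underline{k}}})\bigr)\cap\im\bigl(\mathrm{Res}\colon H^0(\mathfrak{X}^{(i-1)}_{\overline{\underline{k}}})\to H^0(\mathfrak{X}^{(i)}_{\overline{\underline{k}}})\bigr)=0 .
\]
For $i=d$ the positivity argument works for three reasons:
\begin{enumerate}
\item $\mathfrak{X}^{(d)}$ is a finite set of points.
\item $H^0(\mathfrak{X}^{(d)})$ is Poincar\'e dual to itself.
\item $\mathrm{Gys}$ is adjoint to $\mathrm{Res}$, so $\Ker(\mathrm{Gys})=\im(\mathrm{Res})^{\perp}$.
\end{enumerate}
For $0<i<d$ none of this holds:
\begin{enumerate}
\item The strata $\mathfrak{X}^{(i)}$ are positive-dimensional, so $H^0(\mathfrak{X}^{(i)})$ has no $\mathbb{Q}_\ell$-valued cup-product pairing with itself.
\item The adjoint of $\mathrm{Gys}$ on $H^0$ is restriction on \emph{top-degree} cohomology $H^{2(d-i)}(\mathfrak{X}^{(i-1)})\to H^{2(d-i)}(\mathfrak{X}^{(i)})$, not restriction on $H^0$.
\item $H^2(\mathfrak{X}^{(i-1)})$, and for $i\ge 2$ also the summand $H^2(\mathfrak{X}^{(i-2)})$ of $E_1^{-i+2,2i-2}$, have no $\mathbb{Q}$-structure by locally constant functions.
\end{enumerate}
So the standard inner product has no relation to $\Ker(\mathrm{Gys})$, and its nondegeneracy on $\im(\mathrm{Res})$ proves nothing.

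The same problem arises dually for the weight-$2d$ pieces when $d<i<2d$. There $E_2^{i-2d,2d}$ itself is combinatorial. However, the image of $d_1$ relevant to $\Ker N$ involves restriction from $H^{2(d-j)}(\mathfrak{X}^{(j-1)})$, with $j=2d-i$, and this group is Poincar\'e dual to the non-combinatorial $H^2$.

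The paper avoids this by using the spectral sequence only in the middle degree $i=d$. There the strata involved are points and curves, and $N^d$ lands in the combinatorial $E_2^{d,0}$. It reaches $i<d$ by weak Lefschetz for a smooth hyperplane section plus induction on $d$, and $d<i<2d$ by hard Lefschetz.

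Your direct route can be rescued with one extra geometric input. Take an ample class $h$ on the projective model. Then, up to a global sign,
\[
\langle\mathrm{Gys}(x),\,y\cup h^{d-i}\rangle=\sum_Z\deg(h^{d-i}|_Z)\,x_Z\,\mathrm{Res}(y)_Z ,
\]
where $Z$ runs over the components of $\mathfrak{X}^{(i)}_{\overline{\underline{k}}}$. This weighted form is positive definite on $\mathbb{Q}^{\pi_0(\mathfrak{X}^{(i)})}$, and $\Ker(\mathrm{Gys})$ lies in the orthogonal of $\im(\mathrm{Res})$. Your $\mathbb{Q}$-structure argument then gives the displayed vanishing. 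The weight-$2d$ cases follow by hard Lefschetz from the weight-$2(2d-i)$ case of $H^{2d-i}$. As written, though, the key injectivity is not established for $i\neq d$.
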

\begin{proof}
  By Poincar\'e duality, $(ii)$ follows from $(i)$. 
  Therefore, we prove $(i)$. 
  By de Jong's alteration (cf.\ \cite[Theorem 6.5]{deJong}), we may assume that $X$ has projective strictly semi-stable reduction.
  Then we consider the following $\ell$-adic weight spectral sequence:
  \[
  E_1^{a,b}
  = \bigoplus_{r \ge \max(0, -a)}
  H^{b-2r} ( \mathfrak{X} _{\overline{\underline{k}}}^{(a+2r)}, \mathbb{Q}_\ell(-r) )
  \Longrightarrow
  H^{a+b}( X_{\overline{k}}, \mathbb{Q}_\ell ). 
 \]
 By Propositions \ref{isom of N^d}, it holds that 
 $
    H^d(X_{\overline{k}},\mathbb{Q} _\ell)^{I_k} 
    \subseteq  E_2^{-d+1,2d-1}\oplus E_2^{-d+2,2d-2}\oplus \cdots \oplus E_2^{d,0}.                                                 
 $ 
   Since the $G_{\underline{k}}$-representation $E_2 ^{a,b}$ is pure of weight $b$, the $G_{\underline{k}}$-representation $H^d(X_{\overline{k}},\mathbb{Q} _\ell)^{I_k}$ is mixed with weights in $[0,\, 2d-1]$. 

  In the remainder of the proof, we use the induction on $d$. 
  The assertion holds for the case that $d=1$ from the above argument. 
  Assume that the assertion holds for $d-1$. 
  Let $Z$ be a projective smooth hyperplane section of $X$ over $k$. 
  Then, by the weak Lefschetz theorem, for every $i\leq d-1$, the morphism $H^i(X_{\overline{k}},\mathbb{Q} _{\ell})\to H^i(Z_{\overline{k}},\mathbb{Q} _{\ell})$ induced by the inclusion is injective. 
  Thus, by the induction hypothesis, for every $i\leq d-1$, the $G_{\underline{k}}$-representation $H^i(X_{\overline{k}},\mathbb{Q} _\ell)^{I_k}$ is mixed with weights in $[0,\, \max(0,2i-1)]$. 
  On the other hand, by the hard Lefschetz theorem, for every $i\geq d+1$, there is a $G_k$-isomorphism $H^i(X_{\overline{k}},\mathbb{Q} _\ell)\cong H^{2d-i}(X_{\overline{k}},\mathbb{Q} _\ell(i-d))$. 
  Thus, for every $d+1\leq i< 2d $, the $G_{\underline{k}}$-representation $H^i(X_{\overline{k}},\mathbb{Q} _\ell)^{I_k}$ is mixed with weights in $[2i-2d,\ 2d-1]$. 
  Therefore, for every $i\neq 2d$, the $G_{\underline{k}}$-representation $H^i(X_{\overline{k}},\mathbb{Q} _\ell)^{I_k}$ is mixed with weights in $[\max(0, 2i - 2d), \, \max(0, \min(2i - 1, 2d - 1))]$. 
\end{proof}

\begin{prop}\label{isom of N^d-1}
   Let $k$ be an MLF. 
   Assume that $d\geq 2$ and $X$ has strictly semi-stable reduction.
   Then $N^{d-1}:E_2^{-d+1,2d-1}\to E_2^{d-1,1}$ is an isomorphism. 
\end{prop}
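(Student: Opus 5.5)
The plan is to imitate the proof of Proposition \ref{isom of N^d}, replacing its combinatorial $\mathbb{Q}$-structure by polarizations of abelian varieties over the finite field $\underline{k}$. Tate twists and coefficients are omitted as before.

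\textbf{Step 1: reduce to a map between $H^1$'s of the curve strata.} From the shape of $E_1$ we compute both terms.
\begin{itemize}
\item $E_1^{-d+1,2d-1}$: only $r=d-1$ contributes, giving $H^1(\mathfrak{X}_{\overline{\underline{k}}}^{(d-1)})$. This is $H^1$ of a disjoint union of smooth projective curves.
\item $E_1^{-d,2d-1}=0$.
\item $E_1^{-d+2,2d-1}$: this equals $H^3(\mathfrak{X}_{\overline{\underline{k}}}^{(d-2)})\oplus H^1(\mathfrak{X}_{\overline{\underline{k}}}^{(d)})$, and the second summand vanishes because $\mathfrak{X}^{(d)}$ is a finite set of points.
\end{itemize}
Hence $E_2^{-d+1,2d-1}=\Ker\bigl(\mathrm{Gys}\colon H^1(\mathfrak{X}_{\overline{\underline{k}}}^{(d-1)})\to H^3(\mathfrak{X}_{\overline{\underline{k}}}^{(d-2)})\bigr)$. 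Dually,
\[
E_2^{d-1,1}=\Coker\bigl(\mathrm{Res}\colon H^1(\mathfrak{X}_{\overline{\underline{k}}}^{(d-2)})\to H^1(\mathfrak{X}_{\overline{\underline{k}}}^{(d-1)})\bigr).
\]
By the morphism of spectral sequences displayed before Proposition \ref{isom of N^d}, the map $N^{d-1}$ is identified (up to the Tate twist by $t_\ell(\sigma_0)^{d-1}$ and nonzero scalars) with the natural map $\Ker\mathrm{Gys}\to\Coker\mathrm{Res}$ induced by the identity of $H^1(\mathfrak{X}^{(d-1)})$.

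By Poincaré duality on the curves and surfaces, $\mathrm{Gys}$ and $\mathrm{Res}$ are adjoint with respect to the cup product pairings. So $\dim\Ker\mathrm{Gys}=\dim\Coker\mathrm{Res}$, and it suffices to prove injectivity. Equivalently, we must show that the cup-product pairing
\[
B\colon H^1(\mathfrak{X}_{\overline{\underline{k}}}^{(d-1)})\times H^1(\mathfrak{X}_{\overline{\underline{k}}}^{(d-1)})\to\mathbb{Q}_\ell,
\]
which is the orthogonal sum of the alternating pairings on the $H^1$ of the component curves $C_j$, stays nondegenerate on $\im\mathrm{Res}\times\im\mathrm{Res}$.

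\textbf{Step 2: pass to abelian varieties.} Everything is defined over a finite extension of $\underline{k}$; we fix one. Then:
\begin{itemize}
\item $H^1(C_j)\cong V_\ell(J_j)$ up to twist, where $J_j$ is the Jacobian, and $B$ is the Weil pairing attached to the principal polarization of $J=\prod_j J_j$.
\item For each surface component $S$ of $\mathfrak{X}^{(d-2)}$, $H^1(S)\cong V_\ell(\mathrm{Pic}^0_{S,\mathrm{red}})$ up to twist.
\item The signed restriction $\mathrm{Res}$ is $V_\ell$ of the homomorphism $\phi\colon \prod_S\mathrm{Pic}^0_{S,\mathrm{red}}\to J$ given by the signed pullbacks to the curves $C_j\subset S$.
\end{itemize}
Let $A\subseteq J$ be the image abelian subvariety of $\phi$. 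Since $V_\ell$ is exact on the isogeny category, $\im\mathrm{Res}=V_\ell(A)$.

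\textbf{Step 3: use that polarizations restrict to polarizations.} The restriction of the principal polarization $\lambda$ of $J$ to $A$ is a polarization $\lambda|_A\colon A\to A^\vee$, which is an isogeny because ampleness is preserved under pullback to a closed subvariety. The restriction of $B$ to $V_\ell(A)$ is the Weil pairing $e^{\lambda|_A}$, which is nondegenerate because $\lambda|_A$ is an isogeny. This gives the required nondegeneracy, and hence the proposition. This is the analogue of the positivity of the standard inner product used in Proposition \ref{isom of N^d}.

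\textbf{Main obstacle.} The delicate point is the bookkeeping in Steps 1–2. We must check that the differentials of the weight spectral sequence in these degrees are exactly $\pm$ Gysin and restriction maps, and that the identification of $B$ with the Weil pairing of $\lambda$ is compatible with those signs. The signs do not affect the conclusion, since the image of a signed sum of homomorphisms is still an abelian subvariety. We must also justify $\im V_\ell(\phi)=V_\ell(\im\phi)$.

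If the polarization argument met difficulties, a fallback is to exploit the finiteness of $\underline{k}$. Frobenius weights would separate the relevant pieces, and Tate's theorem on semisimplicity of $V_\ell$ of abelian varieties over finite fields would give a $G_{\underline{k}}$-stable complement to $\im\mathrm{Res}$. This is presumably where the hypothesis that $k$ is an MLF enters.
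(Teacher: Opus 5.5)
Your proof is correct, but it takes a different route from the paper at the decisive step.

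\textbf{What is the same.} The paper makes exactly your Step 1 reduction. It reduces to the statement that the natural map $\Ker\bigl(\mathrm{Gys}\colon H^1(\mathfrak{X}_{\overline{\underline{k}}}^{(d-1)})\to H^3(\mathfrak{X}_{\overline{\underline{k}}}^{(d-2)})\bigr)\to\Coker\bigl(\mathrm{Res}\colon H^1(\mathfrak{X}_{\overline{\underline{k}}}^{(d-2)})\to H^1(\mathfrak{X}_{\overline{\underline{k}}}^{(d-1)})\bigr)$ is an isomorphism.

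\textbf{What is different.} For that statement the paper simply cites \cite[Satz 2.13]{Rapoport-Zink}. You instead prove it, by the $H^1$-analogue of the paper's own proof of Proposition \ref{isom of N^d}. Adjointness of $\mathrm{Gys}$ and $\mathrm{Res}$ reduces everything to nondegeneracy of the cup product on $\im\mathrm{Res}$. Where the paper uses positivity of the standard inner product on $H^0$ of points, you use positivity of polarizations: $\im\mathrm{Res}=V_\ell(A)$ for the image abelian subvariety $A\subseteq J$, and $\lambda|_A$ is again a polarization, hence an isogeny.

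\textbf{What your route buys.} It is self-contained, and it never uses finiteness of $\underline{k}$, since everything happens over $\overline{\underline{k}}$. So it proves the statement for an arbitrary CDVF, not just an MLF. The same argument should also adapt, with rational Dieudonné modules of the Picard varieties in place of $V_\ell$, to Proposition \ref{p-adic isom of N^d-1}, where the paper cites Mokrane. In particular your Frobenius-weight fallback is unnecessary. It would not help anyway: all of $H^1(\mathfrak{X}_{\overline{\underline{k}}}^{(d-1)})$ is pure of weight $1$, so weights cannot separate $\im\mathrm{Res}$ from its orthogonal complement.

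\textbf{What the paper's route buys.} Brevity, with the sign bookkeeping delegated to Rapoport--Zink.

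\textbf{The one point to pin down.} With the standard $(-1)^t$ conventions, $\mathrm{Gys}$ should be the transpose of $\mathrm{Res}$ for the \emph{unsigned} orthogonal sum of the cup-product pairings on the components. A global sign is harmless, but a component-dependent sign in the pairing would break the polarization argument. The same sign would equally break positivity in the paper's proof of Proposition \ref{isom of N^d}.
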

\begin{proof}
  For simplicity, we omit Tate twists and coefficients. 
  It suffices to prove that the following natural map is an isomorphism:
  \[
  \Ker \left(H^1(\mathfrak{X} _{\overline{\underline{k}}}^{(d-1)})\xrightarrow{\mathrm{Gys}}H^3(\mathfrak{X} _{\overline{\underline{k}}}^{(d-2)})\right)\longrightarrow \Coker \left(H^1(\mathfrak{X} _{\overline{\underline{k}}}^{(d-2)})\xrightarrow{\mathrm{Res}}H^1(\mathfrak{X} _{\overline{\underline{k}}}^{(d-1)})\right)
  \]
  --- where $\mathrm{Gys}$ (resp.\ $\mathrm{Res}$) is the sum of the Gysin maps (resp.\ restriction maps), each multiplied by an appropriate sign $(\pm 1)$.  
  Thus, the assertion follows from \cite[Satz 2.13]{Rapoport-Zink}. 
\end{proof}

\begin{prop}\label{more detail wt over MLF}
  Let $k$ be an MLF. 
  Assume that $d\geq 2$.  
  Then, for every $2\leq i\leq 2d-2$, the following assertions hold:
  \begin{enumerate}[label=(\roman*)]
    \item  The $G_{\underline{k}}$-representation $H^i(X_{\overline{k}},\mathbb{Q} _\ell)^{I_k}$ is mixed with weights in $[\max(0, 2i - 2d), \, \min(2i - 2, 2d - 2)] $.
    \item  The $G_{\underline{k}}$-representation $H^i(X_{\overline{k}},\mathbb{Q} _\ell)_{I_k}$ is mixed with weights in $[\max(2, 2i - 2d + 2), \,  \min(2i, 2d)] $.
  \end{enumerate}
\end{prop}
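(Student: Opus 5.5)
Following the proof of Theorem \ref{vanishing over MLF}, I would prove (i) and obtain (ii) from it by Poincar\'e duality. Duality pairs $H^i$ with $H^{2d-i}(d)$ and exchanges invariants with coinvariants, so an $I_k$-invariant weight $w$ in degree $2d-i$ corresponds to a coinvariant weight $2d-w$ in degree $i$. Substituting $i\mapsto 2d-i$ into the interval of (i) gives exactly the interval of (ii).

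By de Jong's alteration I reduce to the case where $X$ has projective strictly semi-stable reduction; this is legitimate because $H^i$ of $X$ injects into $H^i$ of an alteration. I then use the $\ell$-adic weight spectral sequence, which degenerates at $E_2$ and whose term $E_2^{a,b}$ is pure of weight $b$. The lower bound $\max(0,2i-2d)$ is already given by Theorem \ref{Jannsen Thm4.2}, so only the upper bound $\min(2i-2,2d-2)$ needs proof.

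\textbf{Step 1: the middle degree.} Take $i=d$ with $d\geq 2$. The graded pieces of $H^d$ are $E_2^{a,d-a}$ for $-d\le a\le d$. Under the morphism of spectral sequences, $N$ raises $a$ by $2$ and lowers $b$ by $2$.
\begin{itemize}
\item By Proposition \ref{isom of N^d}, $N^d:E_2^{-d,2d}\to E_2^{d,0}$ is an isomorphism.
\item By Proposition \ref{isom of N^d-1}, $N^{d-1}:E_2^{-d+1,2d-1}\to E_2^{d-1,1}$ is an isomorphism.
\end{itemize}
An invariant vector $v\in\Ker N$ therefore has zero image in the two top graded pieces $E_2^{-d,2d}$ and $E_2^{-d+1,2d-1}$. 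Indeed, write $v$ modulo the filtration step below the top nonzero graded piece. If its image in $E_2^{-d,2d}$ is nonzero, then $N^d$ of it is nonzero in $E_2^{d,0}$. Because $N$ is strictly compatible with the filtration shifted by $2$, this contradicts $Nv=0$. The same argument, run one filtration level down, uses $N^{d-1}$.

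It follows that $H^d(X_{\overline{k}},\mathbb{Q}_\ell)^{I_k}$ lies in $\bigoplus_{a\ge -d+2}E_2^{a,d-a}$, which has weights at most $2d-2$. This matches $\min(2i-2,2d-2)=2d-2$ at $i=d$.

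\textbf{Step 2: induction on $d$ for $2\le i\le d-1$.} I choose a smooth hyperplane section $Z$ of dimension $d-1$. Weak Lefschetz gives an injection $H^i(X)\hookrightarrow H^i(Z)$ of $G_k$-modules for $i\le d-1$, hence an injection on $I_k$-invariants. When $d-1\ge 2$, the induction hypothesis applied to $Z$ gives weights at most $\min(2i-2,2d-4)\le 2i-2$.

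For the edge cases where the hypothesis does not apply, I would proceed as follows.
\begin{itemize}
\item When $d-1=1$ there is no $i$ with $2\le i\le d-1$, so nothing needs to be shown.
\item When $i=d-1$ and $d-1\ge 2$, the hypothesis gives $\min(2(d-1)-2,2(d-1)-2)=2d-4\le 2i-2$.
\item The case $i=2(d-1)$ for $Z$ never arises, since $i\le d-1$ and $d-1<2d-2$ when $d\ge 2$.
\end{itemize}
So the base case is $d=2$, where only $i=2=d$ occurs, and Step 1 covers it.

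\textbf{Step 3: degrees $d+1\le i\le 2d-2$.} Hard Lefschetz gives $H^i\cong H^{2d-i}(i-d)$ as $G_k$-modules. Here $2\le 2d-i\le d-1$, so by Step 2 the invariants of $H^{2d-i}$ have weights at most $2(2d-i)-2$. The twist adds $2(i-d)$, giving weights at most $2d-2=\min(2i-2,2d-2)$.

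The main obstacle is Step 1. The key point is that invariants avoid the top two graded pieces, and this requires that $N$ acting on the abutment be \emph{strict} with respect to the weight filtration. Strictness should hold because the filtration is the weight filtration and $N$ maps weight $b$ to weight $b-2$ after twisting. So the kernel of $N$ on $H^d$ is computed on $\gr$ as the kernel of $N$ on $E_2$, and on $E_2^{-d,2d}\oplus E_2^{-d+1,2d-1}$ that kernel is zero by the two propositions. The same reasoning gives the bound $\le 2d-1$ in Theorem \ref{vanishing over MLF}; I would argue it concretely via $G_{\underline{k}}$-weight decomposition, since $\gr$ is split by weights.
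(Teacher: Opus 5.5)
Your proposal is correct and is essentially the paper's proof. You reduce (ii) to (i) by Poincar\'e duality, pass to projective strictly semi-stable reduction, and use the isomorphisms $N^d\colon E_2^{-d,2d}\to E_2^{d,0}$ and $N^{d-1}\colon E_2^{-d+1,2d-1}\to E_2^{d-1,1}$ to place $H^d(X_{\overline{k}},\mathbb{Q}_\ell)^{I_k}$ in weights at most $2d-2$. You then induct on $d$ via weak Lefschetz for $i\le d-1$ and hard Lefschetz for $i\ge d+1$. Your Step 1 is more carefully justified than the paper's one-line assertion, though it needs only $NW_b\subseteq W_{b-2}$ and the identification of the induced graded map with $N$ on $E_2$, not strictness.
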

\begin{proof}
  By Poincar\'e duality, $(ii)$ follows from $(i)$. 
  Therefore, we prove $(i)$.  
  We may assume that $X$ has projective strictly semi-stable reduction.
  Then we consider the following $\ell$-adic weight spectral sequence:
\[
  E_1^{a,b}
  = \bigoplus_{r \ge \max(0, -a)}
  H^{b-2r} ( \mathfrak{X} _{\overline{\underline{k}}}^{(a+2r)}, \mathbb{Q}_\ell(-r) )
  \Longrightarrow
  H^{a+b}( X_{\overline{k}}, \mathbb{Q}_\ell ) .
 \]
 By Propositions \ref{isom of N^d-1}, it holds that 
 $
    H^d(X_{\overline{k}},\mathbb{Q} _\ell)^{I_k} 
    \subseteq  E_2^{-d+2,2d-2}\oplus E_2^{-d+3,2d-3}\oplus \cdots \oplus E_2^{d,0} .                                                
 $  

 Since the $G_{\underline{k}}$-representation $E_2 ^{a,b}$ is pure of weight $b$, the $G_{\underline{k}}$-representation $H^d(X_{\overline{k}},\mathbb{Q} _\ell)^{I_k}$ is mixed with weights in $[0,\, 2d-2]$. 

In the remainder of the proof, we use the induction on $d$. 
The assertion holds for the case that $d=2$ from the above argument. 
Assume that the assertion holds for $d-1$. 
As in the latter part of the proof of Theorem \ref{vanishing over MLF}, one can prove that the $G_{\underline{k}}$-representation $H^i(X_{\overline{k}},\mathbb{Q} _\ell)^{I_k}$ is mixed with weights in $[0,\, \max(0,2i-2)]$ for every $i\leq d-1$ and in $[2i-2d,\ 2d-2]$ for every $d+1\leq i< 2d $. 
Thus, for every $2\leq i \leq 2d-2$, the $G_{\underline{k}}$-representation $H^i(X_{\overline{k}},\mathbb{Q} _\ell)^{I_k}$ is mixed with weights in $[\max(0, 2i - 2d), \, \max(0, \min(2i - 2, 2d - 2))]$. 
\end{proof}

For later use, we state the following lemma more general version of the argument used in the proofs of Theorem \ref{vanishing over MLF} and Proposition \ref{more detail wt over MLF}:

\begin{lem}\label{key lem of vanishing}
  Let $\square \in \{\ell, p\}$. 
  Assume that for any $n$-dimensional projective smooth variety $Y$ over $k$ and every $r\notin [0,\, n-1]$, it holds that $H^n(Y_{\overline{k}},\mathbb{Q} _{\square}(r))^{G_k}=0$. 
  Then, for every $i\neq 2d$ and $r\notin \left[\max(0, i - d), \, \max(0, \min(i - 1, d - 1))\right]$, we have 
    $$H^i(X_{\overline{k}},\mathbb{Q} _{\square}(r))^{G_k}=0.$$
\end{lem}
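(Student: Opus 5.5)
We argue by induction on $d=\dim X$, mirroring the second half of the proof of Theorem \ref{vanishing over MLF}, with the weight input now replaced by the vanishing hypothesis in middle degree. Two reductions come first. The invariants $H^i(X_{\overline{k}},\mathbb{Q}_\square(r))^{G_k}$ can only shrink under passage to a subrepresentation. Moreover, the case of a proper smooth $X$ can be reduced to that of a projective smooth $Y$: after replacing $k$ by a finite extension (which only enlarges the invariants), an alteration, or Chow's lemma combined with alteration, gives an alteration $Y\to X$ with $Y$ projective smooth, and the pullback $H^i(X)\to H^i(Y)$ is injective by the trace argument. So we may assume $X$ is projective.

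The base case $i=d$ is exactly the hypothesis with $n=d$. There the interval is $[\max(0,0),\max(0,d-1)]=[0,d-1]$ for $d\geq1$. For $d=0$ only $i=0=2d$ occurs, which is excluded.

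For $i<d$, take a smooth hyperplane section $Z\subset X$ defined over $k$; it exists after a finite extension of $k$ by Bertini, and infinite fields suffice. By weak Lefschetz, $H^i(X_{\overline{k}},\mathbb{Q}_\square(r))\hookrightarrow H^i(Z_{\overline{k}},\mathbb{Q}_\square(r))$ is $G_k$-equivariant and injective for $i\leq d-1$. Since $i\neq 2(d-1)$ whenever $i<d$ and $d\geq 2$, induction gives vanishing for
\[
r\notin[\max(0,i-d+1),\max(0,\min(i-1,d-2))].
\]
For $i\leq d-1$ this interval equals $[0,\max(0,i-1)]$, which is the same as the target interval $[\max(0,i-d),\max(0,\min(i-1,d-1))]=[0,\max(0,i-1)]$. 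One boundary check is needed: if $i=d-1=2(d-1)$, that is $d=1$ and $i=0$, then $Z$ is $0$-dimensional. In that case the weak Lefschetz restriction for $H^0$ is still injective, but the needed statement for $H^0(X)=\mathbb{Q}_\square$ is directly that $\mathbb{Q}_\square(r)^{G_k}=0$ for $r\neq 0$. This follows from the hypothesis applied to a curve $Y$ with $H^2(Y)=\mathbb{Q}_\square(-1)$, or better from $H^2$ of $\mathbb{P}^1$ via the $n=1$ case ($H^1$ of $\mathbb{P}^1$ is zero, so we need care). Simplest is to note that $\mathbb{Q}_\square(r)^{G_k}\neq 0$ would force $H^n(Y)$ of a product of curves to violate the hypothesis. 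I expect this edge bookkeeping, and more generally verifying that interval endpoints match, to be the main nuisance rather than a conceptual obstacle.

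For $d<i<2d$, hard Lefschetz gives a $G_k$-isomorphism $H^i(X_{\overline{k}},\mathbb{Q}_\square(r))\cong H^{2d-i}(X_{\overline{k}},\mathbb{Q}_\square(r-(i-d)))$. Here we use that for $\square=p$ the Lefschetz isomorphism, being given by cup product with the cycle class, is likewise Galois-equivariant with the appropriate twist. Since $2d-i<d$, the previous case gives vanishing unless $r-(i-d)\in[0,\max(0,2d-i-1)]$, that is, unless $r\in[i-d,\max(i-d,d-1)]$. For $d<i<2d$ we have $i-d\leq d-1$, so this equals $[\max(0,i-d),\max(0,\min(i-1,d-1))]$, as required. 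Combining the three ranges of $i$ gives the lemma.
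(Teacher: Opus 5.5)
Your proposal follows the paper's proof: induction on $d$, the hypothesis itself for $i=d$, weak Lefschetz on a smooth hyperplane section for $i<d$, and hard Lefschetz to move $d<i<2d$ down to degree $2d-i$. Your twist $H^{2d-i}(X_{\overline{k}},\mathbb{Q}_\square(r-(i-d)))$ is the correct one. The paper's display has the twist with the opposite sign, although its resulting interval $[i-d,d-1]$ agrees with yours.

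The one step in your write-up that is not yet a proof is the case $d=1$, $i=0$. There you need $\mathbb{Q}_\square(s)^{G_k}=0$ for all $s\neq 0$, and since $H^0=\mathbb{Q}_\square$ this is in fact the $i=0$ case for every $d$. The paper passes over it silently.

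Your first two suggestions fail, because the hypothesis for a curve says nothing about $H^2$; it only concerns $H^1$. The third suggestion is right in spirit but incomplete. For $Y=\mathbb{P}^1\times\mathbb{P}^1$ (or $\mathbb{P}^2$) one has $\mathbb{Q}_\square(s)\subseteq H^2(Y_{\overline{k}},\mathbb{Q}_\square(s+1))$. But $s+1\notin[0,1]$ only for $s\geq 1$ or $s\leq -2$, so $s=-1$ (twist $r=0$) is not reached this way.

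The fix is short. Let $\chi$ be the $\square$-adic cyclotomic character. Then $\mathbb{Q}_\square(s)^{G_k}\neq 0$ iff $\chi^s$ is trivial on $G_k$, and this condition is invariant under $s\mapsto -s$. So it suffices to treat $s\geq 1$. For such $s$, the hypothesis with $n=2$, $Y=\mathbb{P}^2$ and $r=s+1\geq 2$ gives $\mathbb{Q}_\square(s)^{G_k}=H^2(\mathbb{P}^2_{\overline{k}},\mathbb{Q}_\square(s+1))^{G_k}=0$.

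Finally, a point you share with the paper: when you pass to a finite extension $k'$ to make $X$ projective, the argument uses the hypothesis over $k'$, not just over $k$. This is harmless in all the applications. In characteristic $0$ you can avoid the extension altogether by using Chow's lemma plus resolution, together with Bertini over the infinite field $k$.
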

\begin{proof}
  We may assume that $X$ has projective strictly semi-stable reduction. 
  We use the induction on $d$.  
  Assume that $d\geq 2$. 
  Let $Z$ be a projective smooth hyperplane section of $X$ over $k$.
  Then, by the weak Lefschetz theorem, for every $i\leq d-1$, the morphism $H^i(X_{\overline{k}},\mathbb{Q} _{\square})\to H^i(Z_{\overline{k}},\mathbb{Q} _{\square})$ induced by the inclusion is injective. 
  Thus, by the induction hypothesis, for every $i\leq d-1$ and $r\notin [0,\, \max(0,i-1)]$, it holds that $H^i(X_{\overline{k}},\mathbb{Q} _{\square}(r))^{G_k}=0$. 
  On the other hand, by the hard Lefschetz theorem, for every $i\geq d+1$, there is a $G_k$-isomorphism $H^i(X_{\overline{k}},\mathbb{Q} _{\square})\cong H^{2d-i}(X_{\overline{k}},\mathbb{Q} _{\square}(i-d))$. 
  Thus, for every $d+1\leq i< 2d $ and $r\notin [i-d,\ d-1]$, it holds that $H^i(X_{\overline{k}},\mathbb{Q} _{\square}(r))^{G_k}=0$. 
   Therefore, for every $i\neq 2d$ and $r\notin \left[\max(0, i - d), \, \max(0, \min(i - 1, d - 1))\right]$, we have  
    $H^i(X_{\overline{k}},\mathbb{Q} _{\square}(r))^{G_k}=0$. 
\end{proof}

\vspace{\baselineskip}

In the remainder of this section, we show the $p$-adic analogue of the above argument.\\ 

In the remainder of this section, we assume that $\Char k=0$ and that $\underline{k}$ is a perfect field of characteristic $p>0$.
Then we shall write 
$$K_0(\underline{k})\overset{\mathrm{def}}{=}W(\underline{k})[1/p]$$
for the fraction field of the Witt ring of $\underline{k}$.\\

After replacing $k$ by a finite extension, we may assume that $H^i(X_{\overline{k}},\mathbb{Q} _p)$ is a semi-stable representation.
Define
\[
D_{\textnormal{st}}(H^i(X_{\overline{k}}, \mathbb{Q} _p))\overset{\mathrm{def}}{=} \left(B_{\textnormal{st}}\otimes _{\mathbb{Q} _p}H^i(X_{\overline{k}}, \mathbb{Q} _p)\right)^{G_k}.
\]
Using the monodromy operator on $B_{\text{st}}$, we obtain a monodromy operator $N$ on $D_{\textnormal{st}}(H^i(X_{\overline{k}}, \mathbb{Q} _p))$.
Then $N$ determines the increasing filtration $M_{\bullet}$ on $D_{\textnormal{st}}(H^i(X_{\overline{k}}, \mathbb{Q} _p))$, called the monodromy filtration, characterized by the following properties:
\begin{enumerate}
  \item For every sufficiently large integer $j$, it holds that $M_j=D_{\textnormal{st}}(H^i(X_{\overline{k}}, \mathbb{Q} _p))$ and $M_{-j-1}=0$. 
  \item For every  integer $j$, it holds that $NM_j\subseteq  M_{j-2}$. 
  \item For every nonnegative integer $j$, the induced map $N^j:\gr ^M _jD_{\textnormal{st}}(H^i(X_{\overline{k}}, \mathbb{Q} _p))\to \gr ^M _{-j}D_{\textnormal{st}}(H^i(X_{\overline{k}}, \mathbb{Q} _p))$ is an isomorphism. 
\end{enumerate}
Note that if $X$ has semi-stable reduction, then, by the $C_{\text{st}}$-conjecture (cf.\ \cite[Theorem 0.2]{Tsuji}), an isomorphism
$D_{\textnormal{st}}(H^i(X_{\overline{k}}, \mathbb{Q} _p)) \cong H_{\textnormal{log-crys}}^i(\mathfrak{X} _{\underline{k}}/K_0(\underline{k}))$ holds.
Furthermore, if $k$ is an MLF, then Jannsen formulated the following conjecture as a $p$-adic analogue of Conjecture \ref{monodromy weight conjecture}:
\begin{conj}[$p$-adic monodromy weight conjecture, {\cite[p.\ 347]{Jannsen2}}]\label{p-adic monodromy weight conjecture} 
  As a $\varphi$-module, $\gr ^M _j D_{\textnormal{st}}(H^i(X_{\overline{k}}, \mathbb{Q} _p))$ is pure of weight $i+j$ and has the same eigenvalues as $\gr ^M _j H^i(X_{\overline{k}},\mathbb{Q} _{\ell})$ for $\ell $ ($\neq p$). 
\end{conj}

\begin{cor}[{\cite[Corollary 5.2]{Jannsen}}]\label{Jannsen Cor5.2}
  Let $k$ be an MLF. 
  Assume that Conjecture \ref{p-adic monodromy weight conjecture} holds for $X$. 
  Then, for every $r\notin \left[\max(0,i-d),\ \frac{i}{2}\right]$, we have  
  \[
  H^i(X_{\overline{k}},\mathbb{Q} _p(r))^{G_k}=0.
  \]
\end{cor}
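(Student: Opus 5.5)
The plan is to reduce to a semi-stable representation and translate $G_k$-invariants into a Frobenius eigenvalue condition on $\ker N\subseteq D_{\textnormal{st}}$. Then Conjecture \ref{p-adic monodromy weight conjecture} lets us read off the possible weights from the $\ell$-adic side, where Corollary \ref{Jannsen Cor4.3} (via Conjecture \ref{monodromy weight conjecture}) already controls them. Replacing $k$ by a finite extension $k'$ only enlarges invariants, since $V^{G_k}\subseteq V^{G_{k'}}$. So we may assume $V\overset{\mathrm{def}}{=}H^i(X_{\overline{k}},\mathbb{Q}_p)$ is semi-stable, with $D\overset{\mathrm{def}}{=}D_{\textnormal{st}}(V)$ carrying $(\varphi,N)$ and a filtration.

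\textbf{Step 1 (invariants inside $D_{\textnormal{st}}$).} Let $v\in V(r)^{G_k}$. Then $1\otimes v$ lies in $D_{\textnormal{st}}(V(r))$ and satisfies $N(1\otimes v)=0$ and $\varphi(1\otimes v)=1\otimes v$. We have $D_{\textnormal{st}}(V(r))=D\otimes D_{\textnormal{st}}(\mathbb{Q}_p(r))$, where $D_{\textnormal{st}}(\mathbb{Q}_p(r))$ has $N=0$ and $\varphi=p^{-r}$. Hence $V(r)^{G_k}$ injects $\mathbb{Q}_p$-linearly into
\[
\{x\in D \,|\, Nx=0,\ \varphi(x)=p^{r}x\}.
\]
Write $q=p^f$ with $f=[K_0(\underline{k}):\mathbb{Q}_p]$. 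The linearized Frobenius $\varphi^f$ acts $K_0(\underline{k})$-linearly on $\ker N$, and every such $x$ is an eigenvector with eigenvalue $q^{r}$, i.e.\ of weight $2r$. It therefore suffices to show that every eigenvalue of $\varphi^f$ on $\ker N\subseteq D$ has weight in $[\max(0,2i-2d),\, i]$.

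\textbf{Step 2 (weights of $\ker N$).} By the defining properties of the monodromy filtration, $\ker N$ is contained in $M_0$. Using the primitive (Lefschetz-type) decomposition for the nilpotent operator $N$, $\ker N$ is isomorphic as a $\varphi$-module to $\bigoplus_{j\le 0}$ of the images of the primitive parts in $\gr^M_j D$. So its $\varphi^f$-eigenvalues form a subset, with multiplicities, of those of $\bigoplus_{j\le 0}\gr^M_j D$, and this decomposition is functorial in the same way on the $\ell$-adic side. By Conjecture \ref{p-adic monodromy weight conjecture}, $\gr^M_j D$ has the same eigenvalues as $\gr^M_j H^i(X_{\overline{k}},\mathbb{Q}_\ell)$. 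The eigenvalues of $\varphi^f$ on $\ker N$ therefore coincide with those of geometric Frobenius on $\ker N_\ell=H^i(X_{\overline{k}},\mathbb{Q}_\ell)^{I_k}$. The conjecture also gives that the $\ell$-adic monodromy filtration has pure graded pieces, i.e.\ Conjecture \ref{monodromy weight conjecture} holds for $X$. Corollary \ref{Jannsen Cor4.3}(i) then puts these weights in $[\max(0,2i-2d),\, i]$.

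\textbf{Step 3 (conclusion and expected obstacle).} By Step 2, a nonzero element of $V(r)^{G_k}$ forces $2r\in[\max(0,2i-2d),i]$, i.e.\ $r\in[\max(0,i-d),\,i/2]$; for $r$ outside this interval the invariants vanish. The main obstacle is the bookkeeping in Step 2. One must check carefully that the primitive decomposition of $\ker N$ is compatible on both sides, so that ``same eigenvalues on each $\gr^M_j$'' transfers to ``same eigenvalues on $\ker N$''. One must also match the normalizations (arithmetic $\varphi$ versus geometric Frobenius, and the sign of the twist). If the decomposition argument is awkward, the fallback is to bypass the $\ell$-adic comparison for the lower bound. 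Instead one would use Mokrane's $p$-adic weight spectral sequence together with Katz--Messing purity for the crystalline cohomology of the smooth proper strata $\mathfrak{X}^{(j)}_{\underline{k}}$ (after de Jong's alteration). This bounds the weights of $D$ below by $\max(0,2i-2d)$ directly, while the upper bound $i$ on $\ker N\subseteq M_0$ comes straight from purity of $\gr^M_j D$ of weight $i+j$.
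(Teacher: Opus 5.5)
The paper gives no proof of this statement; it is quoted from Jannsen. Your argument is correct, and it runs on the same mechanism the paper uses to prove Theorem \ref{p-adic vanishing over MLF}: embed $H^i(X_{\overline{k}},\mathbb{Q}_p(r))^{G_k}$ into $\{x\in \Ker N : \varphi(x)=p^r x\}\subseteq D_{\textnormal{st}}$, then bound the weights of $\varphi^f$ on $\Ker N$.

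Of your two routes for Step 2, the fallback is the cleaner one.
\begin{itemize}
\item The inclusion $\Ker N\subseteq M_0$ together with the purity half of Conjecture \ref{p-adic monodromy weight conjecture} gives the upper end $i$ directly.
\item The lower end $\max(0,2i-2d)$ holds on all of $D_{\textnormal{st}}$ without the conjecture; it is the same lower end as in Theorem \ref{Jannsen Thm5.3}. Obtain it from the $p$-adic weight spectral sequence for an alteration $X'\to X$ with semi-stable reduction. Then transfer it back through $D_{\textnormal{st}}(H^i(X_{\overline{k}},\mathbb{Q}_p))\hookrightarrow D_{\textnormal{st}}(H^i(X'_{\overline{k}},\mathbb{Q}_p))$, since $D_{\textnormal{st}}$ preserves injections. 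This matters because the conjecture is assumed only for $X$, so the alteration should be used only for this unconditional bound.
\end{itemize}
So the eigenvalue-comparison half of the conjecture is not needed at all.

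Your primary route through Corollary \ref{Jannsen Cor4.3} also works, with one proviso. Your claim that the Frobenius eigenvalues on $\Ker N$ and on $H^i(X_{\overline{k}},\mathbb{Q}_\ell)^{I_k}$ coincide requires reading ``same eigenvalues'' with multiplicities on each $\gr^M_j$. That is what lets you peel off the primitive parts recursively, via $\gr^M_j=P_j\oplus N\gr^M_{j+2}$ for $j\ge 0$.
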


\begin{thm}[{\cite[Theorem 5.3]{Jannsen}}]\label{Jannsen Thm5.3}
   Let $k$ be an MLF. 
   Then, for every $r\notin [\max (0, i-d),\, \min (i, d)]$, we have 
   $$H^i(X_{\overline{k}},\mathbb{Q} _p(r))^{G_k}=0.$$ 
\end{thm}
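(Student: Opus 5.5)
The aim is Jannsen's Theorem 5.3: for an MLF $k$, $H^i(X_{\overline{k}},\mathbb{Q}_p(r))^{G_k}=0$ whenever $r\notin[\max(0,i-d),\min(i,d)]$. The plan is to pass to Dieudonné modules and compare Frobenius slopes (equivalently, Frobenius weights) of the log-crystalline side with the Tate twist.

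\textbf{Step 1: reduce to semi-stable reduction and rewrite invariants.} By de Jong's alteration \cite[Theorem 6.5]{deJong}, $X$ is dominated by some $X'$ with strictly semi-stable reduction over a finite extension $k'/k$. The trace map makes $H^i(X_{\overline{k}},\mathbb{Q}_p)$ a $G_{k'}$-direct summand of $H^i(X'_{\overline{k}},\mathbb{Q}_p)$. Enlarging $k$ only enlarges the invariants, and $d$ is unchanged. Hence we may assume $X$ has semi-stable reduction and $V=H^i(X_{\overline{k}},\mathbb{Q}_p)$ is semi-stable.

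Then $V(r)^{G_k}$ embeds into $D_{\mathrm{st}}(V(r))^{N=0,\varphi=1}\cap\mathrm{Fil}^0$. Indeed, a $G_k$-invariant vector gives a weakly admissible sub-object of rank one, namely $D_{\mathrm{st}}(\mathbb{Q}_p)$. Since $D_{\mathrm{st}}(V(r))=D_{\mathrm{st}}(V)$ with $\varphi$ divided by $p^{r}$ and Hodge filtration shifted by $r$, it suffices to show two things. First, $\mathrm{Fil}^{-r}$ of the Hodge filtration on $D_{\mathrm{dR}}(V)$ must be nonzero in the relevant place. Second, there is no $N$-killed vector on which $\varphi^f$ (with $f=[\underline{k}:\mathbb{F}_p]$) acts by $p^{fr}$.

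\textbf{Step 2: bound $r$ using Hodge--Tate weights and slopes.} By the $C_{\mathrm{dR}}$/$C_{\mathrm{st}}$ comparison \cite{Tsuji}, the Hodge--Tate weights of $V$ are the $j$ with $H^{i-j}(X,\Omega^j)\neq 0$, so they lie in $[\max(0,i-d),\min(i,d)]$. A rank-one weakly admissible sub-object of $D_{\mathrm{st}}(V(r))$ isomorphic to the trivial one forces its unique Hodge--Tate weight to be $0$. Equivalently, $r$ is a Hodge--Tate weight of $V$, which already gives $r\in[\max(0,i-d),\min(i,d)]$.

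As a cross-check, I would also argue via slopes. Using $D_{\mathrm{st}}(V)\cong H^i_{\textnormal{log-crys}}(\mathfrak{X}_{\underline{k}}/K_0(\underline{k}))$ and the log-crystalline weight spectral sequence (Mokrane), whose $E_1$-terms are crystalline cohomologies of the smooth proper $\mathfrak{X}^{(j)}_{\underline{k}}$ with twists, the Frobenius slopes lie in $[\max(0,i-d),\min(i,d)]$. This holds by Mazur--Ogus (slopes of $H^m_{\mathrm{crys}}$ lie in $[\max(0,m-\dim),\min(m,\dim)]$) together with bookkeeping of the twists. Via weak admissibility, an invariant vector gives a $\varphi$-eigenvector of slope exactly $r$.

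\textbf{Main obstacle.} The main obstacle is making Step 2 rigorous: the rank-one subobject $D\cdot v$ must be a sub-filtered $(\varphi,N)$-module with induced filtration, so that weak admissibility of $D_{\mathrm{st}}(V(r))$ forces $t_H(Dv)=t_N(Dv)=0$. One then has to identify the jump of the induced filtration with a Hodge--Tate weight of $V$ shifted by $r$. Concretely, I would argue that $D_{\mathrm{dR}}$ is exact, so $\mathrm{gr}^0$ of the line must be nonzero in $\mathrm{gr}^{0}D_{\mathrm{dR}}(V(r))=\mathrm{gr}^{-r}D_{\mathrm{dR}}(V)\otimes t^{-r}$, which is nonzero only if $r$ is a Hodge number of $V$. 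This is the fact I would rely on, with the slope argument as backup.
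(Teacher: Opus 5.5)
The paper gives no proof of this statement; it is quoted from Jannsen, so there is no in-paper argument to compare yours with. Your Step 2 is correct. It is the standard Hodge--Tate argument for exactly this bound, since the interval $[\max(0,i-d),\min(i,d)]$ is precisely the set of $j$ for which $H^{i-j}(X,\Omega^j_X)$ can be nonzero.

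There are three things to fix. First, your sign is off. With $D_{\mathrm{dR}}(V)=(B_{\mathrm{dR}}\otimes_{\mathbb{Q}_p}V)^{G_k}$ one has $\mathrm{Fil}^jD_{\mathrm{dR}}(V(r))=t^{-r}\mathrm{Fil}^{j+r}D_{\mathrm{dR}}(V)$, so
$\mathrm{gr}^0D_{\mathrm{dR}}(V(r))\cong\mathrm{gr}^{r}D_{\mathrm{dR}}(V)\cong H^{i-r}(X,\Omega^r_X)$.
Your $\mathrm{gr}^{-r}$, and the $\mathrm{Fil}^{-r}$ in Step 1, would give the wrong interval and contradict your own conclusion that $r$ is a Hodge number. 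Second, the two conditions in Step 1 are alternatives, not joint requirements: ruling out either one for $r$ outside the interval suffices. Third, $\mathrm{Fil}^{r}D_{\mathrm{dR}}(V)\neq0$ alone only yields $r\le\min(i,d)$. The lower bound really needs the $\mathrm{gr}$ statement, which you do eventually use.

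What you call the main obstacle is immediate. For $0\neq v\in V(r)^{G_k}$, the element $1\otimes v$ lies in $\mathrm{Fil}^0\setminus\mathrm{Fil}^1$ of $B_{\mathrm{dR}}\otimes_{\mathbb{Q}_p}V(r)$ simply because $1\notin\mathrm{Fil}^1B_{\mathrm{dR}}$. So you need no weak admissibility, no $D_{\mathrm{st}}$, and no alteration. The shortest version is:
$0\neq v\otimes1\in(V(r)\otimes_{\mathbb{Q}_p}\mathbb{C}_p)^{G_k}\cong\bigoplus_jH^{i-j}(X,\Omega^j_X)\otimes_k\mathbb{C}_p(r-j)^{G_k}=H^{i-r}(X,\Omega^r_X)$.
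This uses Faltings' Hodge--Tate decomposition and Tate's theorem $\mathbb{C}_p(m)^{G_k}=0$ for $m\neq0$.

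Your slope cross-check is also valid. A $\varphi$-fixed vector of $D_{\textnormal{st}}(V(r))$ is a $y\in D_{\textnormal{st}}(V)$ with $\varphi y=p^ry$. Newton-above-Hodge for the weakly admissible $D_{\textnormal{st}}(V)$ already confines the slopes to the Hodge range, with no spectral sequence needed.

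It is worth contrasting this with the paper's sharper Theorem \ref{p-adic vanishing over MLF}. There the same vector $y$ is read through archimedean Frobenius weights (it has weight $2r$) together with $N=0$. The weight spectral sequence and Proposition \ref{p-adic isom of N^d} then bound the weights of $\Ker N$ on $H^d_{\textnormal{log-crys}}$ by $2d-1$. Hodge--Tate weights and slopes cannot detect this improvement. For example, for an elliptic curve $E$ with good ordinary reduction, both Hodge--Tate weight $1$ and slope $1$ occur in $H^1$, yet $H^1(E_{\overline{k}},\mathbb{Q}_p(1))^{G_k}=V_p(E)^{G_k}=0$.
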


Assume that $X$ has strictly semi-stable reduction.
Then, for $\mathfrak{X} _{\underline{k}}$, there is the following $p$-adic weight spectral sequence (cf.\ \cite[\S 3.23]{Mokrane}, \cite[Part I. \S 2]{Nakkajima}):
\[
   E_1^{a,b}
   = \bigoplus_{r \ge \max(0, -a)}
   H_{\textnormal{crys}}^{b-2r} ( \mathfrak{X} _{\underline{k}} ^{(a+2r)}/K_0(\underline{k}))(-r)
  \Longrightarrow
  H_{\textnormal{log-crys}}^{a+b}( \mathfrak{X} _{\underline{k}} /K_0(\underline{k})).
  \]
In general, note that the $p$-adic weight spectral sequence exists for proper SNCL-variety over $\underline{k}$. 

If $k$ is an MLF, then it follows immediately from the finiteness of the residue field that the following properties hold:
\begin{enumerate}
  \item As a $\varphi$-module, $E_1^{a,b}$ is pure of weight $b$. 
        Thus, as a $\varphi$-module, $E_2^{a,b}$ is also pure of weight $b$. 
  \item The filtration on $H_{\textnormal{log-crys}}^i(\mathfrak{X} _{\underline{k}}/K_0(\underline{k}))$ induced by the convergence of the $p$-adic weight spectral sequence gives the weight filtration on $H_{\textnormal{log-crys}}^i(\mathfrak{X} _{\underline{k}}/K_0(\underline{k}))$. 
  \item The $p$-adic weight spectral sequence degenerates at the $E_2$-page. 
\end{enumerate}
Even if $k$ is not an MLF, by \cite[Theorem 3.6]{Nakkajima}, the $p$-adic weight spectral sequence degenerates at the $E_2$-page. 

Let
$$\varepsilon \overset{\mathrm{def}}{=}(\overline{\varepsilon}_n)_{n\in \mathbb{Z} _{\geq 0}}$$
be an element of $\mathcal{O} _{\mathbb{C} _p}^\flat$ satisfying $\varepsilon _0=1$, $\varepsilon _1\neq 1$, and $\varepsilon _{n+1}^p=\varepsilon _n$, where $\overline{\varepsilon}_n$ is the reduction of $\varepsilon_n$ modulo $p$ for each $n \in \mathbb{Z}_{\geq 0}$.
Let
\[[\,\text{-}\,]:\mathcal{O} _{\mathbb{C} _p}^\flat \longrightarrow A_{\text{inf}}\]
be the Teichm\"uller lift, and define
\[t\overset{\mathrm{def}}{=}\log [\varepsilon ]=\sum_{m=1}^{\infty}(-1)^{m-1}\frac{([\varepsilon ]-1)^m}{m}\in B_{\text{dR}}.\]
Let $e$ be a generator of $\mathbb{Q} _p(1)$.
Then 
\[
t^{-1}\otimes e : K_0(\underline{k}) \longrightarrow K_0(\underline{k})(1) \overset{\mathrm{def}}{=} (B_{\textnormal{crys}}\otimes _{\mathbb{Q} _p}\mathbb{Q}_p(1))^{G_k} 
\]
is an isomorphism. 
With this notation, we obtain the following morphism between the $p$-adic weight spectral sequences:
\[
\begin{tikzcd}
E_1^{a,b}
=
\displaystyle
\bigoplus_{r \ge \max(0,-a)}
H_{\mathrm{crys}}^{\,b-2r}
(
\mathfrak{X}_{\underline{k}}^{(a+2r)}/K_0(\underline{k})
)
(-r)
\arrow[r, Rightarrow]
\arrow[d, "\mathrm{id}\otimes (t^{-1}\otimes e)"]
&
H_{\mathrm{log\text{-}crys}}^{a+b}
(
\mathfrak{X}_{\underline{k}}/K_0(\underline{k})
)
\arrow[d, "N"]
\\
E_1^{a+2,b-2}
=
\displaystyle
\bigoplus_{r \ge \max(0,-a)}
H_{\mathrm{crys}}^{\,b-2r}
(
\mathfrak{X}_{\underline{k}}^{(a+2r)}/K_0(\underline{k})
)
(-r+1)
\arrow[r, Rightarrow]
&
H_{\mathrm{log\text{-}crys}}^{a+b}
(
\mathfrak{X}_{\underline{k}}/K_0(\underline{k})
).
\end{tikzcd}
\]

\begin{prop}\label{p-adic isom of N^d}
   Assume that $d\geq 1$ and $X$ has strictly semi-stable reduction. 
   Then $N^d:E_2^{-d,2d}\to E_2^{d,0}$ is an isomorphism. 
\end{prop}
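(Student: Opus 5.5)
We follow the proof of Proposition \ref{isom of N^d} essentially verbatim, now in the $p$-adic weight spectral sequence. Omitting Tate twists, the map $N^d:E_2^{-d,2d}\to E_2^{d,0}$ is induced by the vertical morphism of spectral sequences $\mathrm{id}\otimes(t^{-1}\otimes e)$ applied $d$ times. The term $E_1^{-d,2d}$ contains only the summand $H^0_{\mathrm{crys}}(\mathfrak{X}^{(d)}_{\underline{k}}/K_0(\underline{k}))(-d)$, and $E_1^{d,0}$ is $H^0_{\mathrm{crys}}(\mathfrak{X}^{(d)}_{\underline{k}}/K_0(\underline{k}))$. Since $(t^{-1}\otimes e)$ is an isomorphism on these one-dimensional twists, $N^d$ on $E_1$ is the identity of $H^0_{\mathrm{crys}}(\mathfrak{X}^{(d)}_{\underline{k}})$ up to twist. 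Because $\mathfrak{X}^{(d)}_{\underline{k}}$ has dimension $0$ and no $\mathfrak{X}^{(j)}$ with $j>d$ exists, we get
\[
E_2^{-d,2d}=\Ker\bigl(H^0_{\mathrm{crys}}(\mathfrak{X}^{(d)}_{\underline{k}})\xrightarrow{\mathrm{Gys}}H^2_{\mathrm{crys}}(\mathfrak{X}^{(d-1)}_{\underline{k}})\bigr),\quad E_2^{d,0}=\Coker\bigl(H^0_{\mathrm{crys}}(\mathfrak{X}^{(d-1)}_{\underline{k}})\xrightarrow{\mathrm{Res}}H^0_{\mathrm{crys}}(\mathfrak{X}^{(d)}_{\underline{k}})\bigr).
\]
So it suffices to show that the natural map $\Ker\mathrm{Gys}\to\Coker\mathrm{Res}$ is an isomorphism.

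As before, $\mathrm{Gys}$ and $\mathrm{Res}$ are adjoint (up to signs) with respect to the crystalline Poincaré pairings on $\mathfrak{X}^{(d)}_{\underline{k}}$ and $\mathfrak{X}^{(d-1)}_{\underline{k}}$. Hence $\Ker\mathrm{Gys}=(\im\mathrm{Res})^{\perp}$ under the cup-product pairing $B$ on $H^0_{\mathrm{crys}}(\mathfrak{X}^{(d)}_{\underline{k}})$. The two sides then have equal dimension, and bijectivity reduces to injectivity, i.e.\ to $(\im\mathrm{Res})^\perp\cap\im\mathrm{Res}=0$. This in turn is the nondegeneracy of $B$ restricted to $\im\mathrm{Res}\times\im\mathrm{Res}$.

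To prove that nondegeneracy we use the combinatorial $\mathbb{Q}$-structure. Since $\mathfrak{X}^{(d)}_{\underline{k}}$ and the components of $\mathfrak{X}^{(d-1)}_{\underline{k}}$ are smooth proper over the perfect field $\underline{k}$, $H^0_{\mathrm{crys}}$ of each is $K_0$ of its ring of global functions, or after passing to geometric points (base change to $K_0(\overline{\underline{k}})$, harmless for injectivity) the space of $K_0$-valued locally constant functions. Thus $\mathrm{Res}$ is the base change of the $\mathbb{Q}$-linear restriction $\mathrm{Res}_{\mathbb{Q}}$ on locally constant $\mathbb{Q}$-valued functions, and $B$ is the base change of the standard inner product $B_{\mathbb{Q}}$ (the trace pairing on a finite set of points). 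Then $B_{\mathbb{Q}}$ is positive definite, so its restriction to $\im\mathrm{Res}_{\mathbb{Q}}$ is nondegenerate. Nondegeneracy of a bilinear form on a subspace is a determinant condition and is preserved under the field extension $\mathbb{Q}\subset K_0$, and $\im\mathrm{Res}=\im\mathrm{Res}_{\mathbb{Q}}\otimes K_0$.

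The main obstacle is the compatibility bookkeeping. One has to check that the $p$-adic weight spectral sequence's $d_1$ differentials on these extreme rows are indeed the signed sums of restriction and Gysin maps (this is Mokrane's description). One also has to check that the crystalline Gysin map in degree $0\to 2$ is adjoint to restriction under crystalline Poincaré duality, with the same signs as in the $\ell$-adic case. Granting these from \cite{Mokrane}, \cite{Nakkajima}, the argument is identical to the $\ell$-adic one, and the Frobenius structure plays no role.
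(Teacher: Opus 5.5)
Your proposal is correct and follows the paper's proof essentially step for step. You identify $E_2^{-d,2d}$ and $E_2^{d,0}$ with $\Ker\mathrm{Gys}$ and $\Coker\mathrm{Res}$, use the duality of $\mathrm{Gys}$ and $\mathrm{Res}$ to reduce to injectivity, and obtain nondegeneracy of $B$ on $\im\mathrm{Res}\times\im\mathrm{Res}$ from the combinatorial $\mathbb{Q}$-structure, as in the $\ell$-adic case. You also spell out details the paper leaves implicit: the positive definiteness of $B_{\mathbb{Q}}$, the dimension count, and the harmless base change to $K_0(\overline{\underline{k}})$.
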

\begin{proof}
  For simplicity, we omit Tate twists and coefficients. 
  It suffices to prove that the following natural map is an isomorphism:
  \[
  \Ker \left(H_{\text{crys}}^0(\mathfrak{X} _{{\underline{k}}}^{(d)})\xrightarrow{\mathrm{Gys}}H_{\text{crys}}^2(\mathfrak{X} _{{\underline{k}}}^{(d-1)})\right)\longrightarrow \Coker \left(H_{\text{crys}}^0(\mathfrak{X} _{{\underline{k}}}^{(d-1)})\xrightarrow{\mathrm{Res}}H_{\text{crys}}^0(\mathfrak{X} _{{\underline{k}}}^{(d)})\right)
  \]
  --- where $\mathrm{Gys}$ (resp.\ $\mathrm{Res}$) is the sum of the Gysin maps (resp.\ restriction maps), each multiplied by an appropriate sign $(\pm 1)$. 
  Since $\mathrm{Gys}$ and $\mathrm{Res}$ are dual to each other, it is enough to prove that the above map is injective.  
  Therefore, it suffices to prove that the nondegenerate bilinear pairing induced by the cup product
  $
  B: H_{\text{crys}}^0(\mathfrak{X} _{{\underline{k}}}^{(d)}) \times H_{\text{crys}}^0(\mathfrak{X} _{{\underline{k}}}^{(d)}) \longrightarrow 
  K_0(\underline{k}) 
  $
  remains nondegenerate when restricted to $\im \mathrm{Res}\times \im \mathrm{Res}$. 
  Since each of $\square \in \{H_{\text{crys}}^0(\mathfrak{X} _{{\underline{k}}}^{(d)}), H_{\text{crys}}^0(\mathfrak{X} _{{\underline{k}}}^{(d-1)}), \mathrm{Res}, B \}$ admits the corresponding combinatorial $\mathbb{Q} $-structure $\square_{\mathbb{Q}} $, the proof of the assertion is similar to that of Proposition \ref{isom of N^d}. 
\end{proof}

\begin{prop}\label{wt of kerN}
  Let $k$ be an MLF.
  Assume that $d\geq 1$ and $X$ has strictly semi-stable reduction. 
  Then, as a $\varphi$-module, $\Ker \left(N:H_{\textnormal{log-crys}}^d(\mathfrak{X} _{\underline{k}}/K_0(\underline{k}))\to H_{\textnormal{log-crys}}^d(\mathfrak{X} _{\underline{k}}/K_0(\underline{k}))\right)$ is mixed with weights in $[0,\, 2d-1]$. 
\end{prop}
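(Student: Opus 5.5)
The plan mirrors the first part of the proof of Theorem \ref{vanishing over MLF}, carried out on the $p$-adic weight spectral sequence
\[
E_1^{a,b}=\bigoplus_{r\ge\max(0,-a)}H_{\mathrm{crys}}^{b-2r}(\mathfrak{X}_{\underline{k}}^{(a+2r)}/K_0(\underline{k}))(-r)\Longrightarrow H_{\mathrm{log\text{-}crys}}^{a+b}(\mathfrak{X}_{\underline{k}}/K_0(\underline{k})).
\]
It degenerates at $E_2$. Since $k$ is an MLF, $E_2^{a,b}$ is pure of weight $b$ as a $\varphi$-module, and the induced filtration is the weight filtration. In degree $i=d$ the graded pieces of $H^d_{\mathrm{log\text{-}crys}}$ are $E_2^{a,d-a}$ for $-d\le a\le d$, with weights $d-a\in[0,2d]$. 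The only weight-$2d$ piece is $E_2^{-d,2d}$, which is the top quotient of the filtration: $E_2^{-d,2d}=H^d/F^{-d+1}$, where $F^{-d+1}$ is the filtration step collecting $E_2^{a,d-a}$ for $a\ge -d+1$. So it suffices to show
\[
\Ker N\subseteq F^{-d+1}H^d=\text{(the part built from }E_2^{-d+1,2d-1},\dots,E_2^{d,0}).
\]
All graded pieces of $F^{-d+1}$ have weights $b\in[0,2d-1]$, and a $\varphi$-stable subspace of a mixed $\varphi$-module with weights in a set $S$ is again mixed with weights in $S$. Once the inclusion holds, $\Ker N$ is mixed with weights in $[0,2d-1]$.

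To get the inclusion, use the morphism of spectral sequences from the diagram preceding Proposition \ref{p-adic isom of N^d}: $N$ on $H^d_{\mathrm{log\text{-}crys}}$ is compatible with the map $\mathrm{id}\otimes(t^{-1}\otimes e)\colon E_1^{a,b}\to E_1^{a+2,b-2}$. Hence $N$ shifts the filtration, $N(F^{a})\subseteq F^{a+2}$, and on graded pieces it induces $E_2^{a,b}\to E_2^{a+2,b-2}$. Then $N^d$ maps $H^d$ into $F^{d}=E_2^{d,0}$. Since $F^{-d+1}$ is sent into $F^{d+1}=0$, $N^d$ factors through $H^d/F^{-d+1}=E_2^{-d,2d}$, and the factored map is exactly the map $N^d\colon E_2^{-d,2d}\to E_2^{d,0}$ on $E_2$-terms. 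By Proposition \ref{p-adic isom of N^d} this map is an isomorphism. So if $x\in\Ker N$, then $N^d x=0$, and the image of $x$ in $E_2^{-d,2d}$ vanishes, i.e.\ $x\in F^{-d+1}$. This works because $d\ge1$, so $N^d x=N^{d-1}(Nx)=0$.

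The main obstacle is making precise that the map induced by $N^d$ on the graded piece $E_2^{-d,2d}\to E_2^{d,0}$ agrees with the $d$-fold composite of the $E_2$-level maps from the morphism of spectral sequences. This needs the morphism to respect the abutment filtrations with the stated shift by $2$; I would take that compatibility from \cite{Mokrane}, \cite{Nakkajima}, as is implicit in the diagram. The remaining steps are formal once purity of $E_2^{a,b}$ for MLFs is used.
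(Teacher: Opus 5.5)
Your proposal is correct and matches the paper's proof. The paper likewise uses Proposition~\ref{p-adic isom of N^d} to place $\Ker N$ inside the part of $H^d_{\mathrm{log\text{-}crys}}$ built from $E_2^{-d+1,2d-1},\dots,E_2^{d,0}$, and then uses that $E_2^{a,b}$ is pure of weight $b$. Your filtration argument, in which $N^d$ factors through $H^d/F^{-d+1}=E_2^{-d,2d}$ and lands in $F^d=E_2^{d,0}$, spells out the step behind the paper's inclusion, which the paper leaves implicit.
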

\begin{proof}
  We consider the following $p$-adic weight spectral sequence:
  \[
   E_1^{a,b}
   = \bigoplus_{r \ge \max(0, -a)}
   H_{\textnormal{crys}}^{b-2r} ( \mathfrak{X} _{\underline{k}} ^{(a+2r)}/K_0(\underline{k}))(-r)
  \Longrightarrow
  H_{\textnormal{log-crys}}^{a+b}( \mathfrak{X} _{\underline{k}} /K_0(\underline{k})).
  \]
  By Proposition \ref{p-adic isom of N^d}, we have  
  $$\Ker \left(H_{\textnormal{log-crys}}^d(\mathfrak{X} _{\underline{k}}/K_0(\underline{k}))\xrightarrow{N} H_{\textnormal{log-crys}}^d(\mathfrak{X} _{\underline{k}}/K_0(\underline{k}))\right)\subseteq  E_2^{-d+1,2d-1}\oplus E_2^{-d+2,2d-2}\oplus \cdots \oplus E_2^{d,0} .$$
  Since  $E_2^{a,b}$ is pure of weight $b$, $\Ker \left(N^d:H_{\textnormal{log-crys}}^d(\mathfrak{X} _{\underline{k}}/K_0(\underline{k}))\to H_{\textnormal{log-crys}}^d(\mathfrak{X} _{\underline{k}}/K_0(\underline{k}))\right)$ is mixed with weights in $[0,\, 2d-1]$ as $\varphi$-modules. 
\end{proof}

The functor $D_{\mathrm{st}}$ is a fully faithful functor from the category of semi-stable $G_k$-representations over $\mathbb{Q}_p$ to the category of filtered $(\varphi,N)$-modules over $k$. 
Furthermore, it is an equivalence of categories if the target is restricted to the full subcategory of admissible filtered $(\varphi,N)$-modules.

\begin{thm}\label{p-adic vanishing over MLF}
    Let $k$ be an MLF. 
    Then we have 
    $$H^i(X_{\overline{k}},\mathbb{Q} _p(r))^{G_k}=0$$
    for every $i\neq 2d$ and $r\notin \left[\max(0, i - d), \, \max(0, \min(i - 1, d - 1))\right]$.
    In particular, for every $i\neq 0$, we have  
    $H^i(X_{\overline{k}},\mathbb{Q} _p)_{G_k}=0$.
\end{thm}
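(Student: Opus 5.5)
The plan is to reduce, via Lemma \ref{key lem of vanishing} with $\square=p$, to the middle-degree case: it suffices to show that for every $n$-dimensional projective smooth variety $Y$ over an MLF $k$ and every $r\notin[0,\,n-1]$ we have $H^n(Y_{\overline{k}},\mathbb{Q}_p(r))^{G_k}=0$. Since invariants only grow under finite extension, we may replace $k$ by a finite extension. By de Jong's alteration (cf.\ \cite[Theorem 6.5]{deJong}), $H^n(Y_{\overline{k}},\mathbb{Q}_p)$ injects $G_k$-equivariantly into the cohomology of a variety with projective strictly semi-stable reduction; this uses the trace map for a generically finite surjection. We may therefore assume that $Y$ has strictly semi-stable reduction $\mathfrak{Y}$ and that $V=H^n(Y_{\overline{k}},\mathbb{Q}_p)$ is semi-stable.

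Next I pass to $D_{\mathrm{st}}$. A nonzero $G_k$-invariant of $V(r)$ is a $G_k$-equivariant map $\mathbb{Q}_p(-r)\to V$. By full faithfulness of $D_{\mathrm{st}}$, this is a morphism of filtered $(\varphi,N)$-modules $D_{\mathrm{st}}(\mathbb{Q}_p(-r))\to D_{\mathrm{st}}(V)$. Its image is a nonzero vector $x\in D_{\mathrm{st}}(V)\cong H^n_{\mathrm{log\text{-}crys}}(\mathfrak{Y}_{\underline{k}}/K_0(\underline{k}))$ (by \cite[Theorem 0.2]{Tsuji}) satisfying $Nx=0$ and $\varphi x=p^{r}x$, up to normalization of the twist. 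Over the finite residue field $\mathbb{F}_q$, with $q=p^f$, the linearized Frobenius $\varphi^f$ then has eigenvalue $q^{r}$ on $x$, so $x$ generates a sub-$\varphi$-module of weight exactly $2r$.

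I then invoke Proposition \ref{wt of kerN}: $\Ker N$ on $H^n_{\mathrm{log\text{-}crys}}$ is mixed with weights in $[0,\,2n-1]$. Because $x\in\Ker N$ has weight $2r$, we get $0\le 2r\le 2n-1$, that is, $r\in[0,\,n-1]$. This proves the hypothesis of Lemma \ref{key lem of vanishing}, and the lemma then gives the vanishing for all $i\ne 2d$ and $r$ outside the stated interval. The coinvariant statement for $i\neq 0$ follows by Poincar\'e duality: $(H^i_{G_k})^\vee\cong H^{2d-i}(X_{\overline{k}},\mathbb{Q}_p(d))^{G_k}$. Here $2d-i\ne 2d$, and $r=d$ lies outside $[\max(0,d-i),\,\max(0,\min(2d-i-1,d-1))]$, since $\min(2d-i-1,d-1)\le d-1<d$. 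The case $d=0$ is trivial.

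The main obstacle is the bookkeeping in the $D_{\mathrm{st}}$ step. I must check that the Frobenius on $D_{\mathrm{st}}(\mathbb{Q}_p(-r))$ matches weight $2r$ under the weight convention used for the $p$-adic weight spectral sequence, which is where Tsuji's comparison is compatible with $\varphi$ and $N$. I also need care that semi-stability and the alteration reduction commute with the invariant computation. Both are standard, but the sign conventions for the Tate twist must be fixed consistently with the normalization $t^{-1}\otimes e$ used above.
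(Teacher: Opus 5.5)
Your proposal is correct and follows essentially the same route as the paper. You reduce via Lemma \ref{key lem of vanishing} to the middle degree, then use the $C_{\mathrm{st}}$ comparison and the faithfulness of $D_{\mathrm{st}}$ to place a nonzero invariant in $\Ker N$ on $H^d_{\mathrm{log\text{-}crys}}$ with $\varphi=p^r$ (weight $2r$), and conclude $r\in[0,d-1]$ from Proposition \ref{wt of kerN}. Your explicit treatment of the twist, the alteration reduction, and the Poincar\'e duality step only spells out what the paper leaves implicit; your convention that $D_{\mathrm{st}}(\mathbb{Q}_p(-r))$ has $\varphi=p^r$ matches the paper's.
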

\begin{proof}
  By the $C_{\text{st}}$-conjecture, we have 
  \[
  \begin{aligned}
    H^d(X_{\overline{k}},\mathbb{Q} _p)^{G_k} &\cong \Hom _{G_k}\left(\mathbb{Q} _p, H^d(X_{\overline{k}},\mathbb{Q} _p)\right)\\
                                                 &\cong \Hom _{\varphi, N, \mathrm{Fil}^{\bullet}} \left(K_0(\underline{k}), D_{\textnormal{st}}(H^d(X_{\overline{k}},\mathbb{Q} _p))\right)\\
                                                 &\cong \Hom _{\varphi, N, \mathrm{Fil}^{\bullet}} \left(K_0(\underline{k}), H_{\textnormal{log-crys}}^d( \mathfrak{X} _{\underline{k}} /K_0)\right)\\
                                                 &\subseteq  \Ker \left( H_{\textnormal{log-crys}}^d( \mathfrak{X} _{\underline{k}} /K_0(\underline{k})) \xrightarrow{N} H_{\textnormal{log-crys}}^d( \mathfrak{X} _{\underline{k}} /K_0(\underline{k})) \right)^{\varphi =1}\\                                                 
  \end{aligned}
  \]
  --- where the last inclusion is obtained by evaluating each morphism at $1\in K_0(\underline{k})$. 
  Hence, by Proposition \ref{wt of kerN}, for every $r\notin [0,\, d-1]$, it holds that  $H^d(X_{\overline{k}},\mathbb{Q} _p(r))^{G_k}=0$. 
  Thus, by Lemma \ref{key lem of vanishing}, for every $i\neq 2d$ and $r\notin \left[\max(0, i - d), \, \max(0, \min(i - 1, d - 1))\right]$, we have 
    $H^i(X_{\overline{k}},\mathbb{Q} _p(r))^{G_k}=0.$
\end{proof}

\begin{prop}\label{p-adic isom of N^d-1}
   Let $k$ be an MLF.
   Assume that $d\geq 2$ and $X$ has strictly semi-stable reduction. 
   Then $N^{d-1}:E_2^{-d+1,2d-1}\to E_2^{d-1,1}$ is an isomorphism. 
\end{prop}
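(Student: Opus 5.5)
The plan is to follow the proof of Proposition \ref{isom of N^d-1} and reduce to a statement about a single map between crystalline cohomologies of the strata of $\mathfrak{X}_{\underline{k}}$. Omitting Tate twists, the $E_1$-differentials and the shape of $N$ (given by $\mathrm{id}\otimes(t^{-1}\otimes e)$ on $E_1$) show that $N^{d-1}\colon E_2^{-d+1,2d-1}\to E_2^{d-1,1}$ is identified with the natural map
\[
\Ker\left(H_{\mathrm{crys}}^1(\mathfrak{X}_{\underline{k}}^{(d-1)})\xrightarrow{\mathrm{Gys}}H_{\mathrm{crys}}^3(\mathfrak{X}_{\underline{k}}^{(d-2)})\right)\longrightarrow\Coker\left(H_{\mathrm{crys}}^1(\mathfrak{X}_{\underline{k}}^{(d-2)})\xrightarrow{\mathrm{Res}}H_{\mathrm{crys}}^1(\mathfrak{X}_{\underline{k}}^{(d-1)})\right).
\]
Here $\mathfrak{X}_{\underline{k}}^{(d-1)}$ is a disjoint union of smooth projective curves and $\mathfrak{X}_{\underline{k}}^{(d-2)}$ a disjoint union of smooth projective surfaces. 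The components of $E_1^{-d+1,2d-1}$ and $E_1^{d-1,1}$ with other indices $r$ vanish for dimension reasons, just as in the $\ell$-adic case. Gysin and restriction are adjoint under Poincar\'e duality, and the source and target of the displayed map have the same dimension. So it suffices to prove injectivity, which amounts to showing that the cup-product pairing on $H_{\mathrm{crys}}^1(\mathfrak{X}_{\underline{k}}^{(d-1)})$ stays nondegenerate on $\im\mathrm{Res}$.

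In the $\ell$-adic case this nondegeneracy is \cite[Satz 2.13]{Rapoport-Zink}. Their proof uses the Hodge index theorem on each surface component: the restriction of classes to curves on a surface is controlled by a definite form built from the Lefschetz operator attached to an ample class. I would transport this to crystalline cohomology by one of two routes.

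The first route is direct. We have $H^1_{\mathrm{crys}}$ of a curve $C$ equal to $V_p$ of the Dieudonn\'e module of $\mathrm{Pic}^0_C$, and $H^1$ of a surface $S$ is governed by $\mathrm{Pic}^0_S$. Hence $\im\mathrm{Res}$ and the pairing are induced, after tensoring with $K_0(\underline{k})$, from homomorphisms of abelian varieties $\mathrm{Pic}^0_S\to\mathrm{Pic}^0_C$ and from the polarizations. Nondegeneracy can then be checked at the level of $\Hom$ of abelian varieties tensored with $\mathbb{Q}$. That formulation does not depend on the cohomology theory, since both $\ell$-adic and crystalline realizations of abelian varieties are faithful on isogeny categories. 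The second route uses that $\underline{k}$ is finite. By Katz--Messing, the characteristic polynomials of Frobenius on $H^1_{\mathrm{crys}}$ and $H^1_{\mathrm{et}}$ of the smooth projective strata agree, with compatible Gysin and restriction maps via cycle classes. The rank of the displayed map is determined by the ranks of the composite endomorphisms, which can be expressed through algebraic correspondences. Equality of these ranks across the two realizations then gives the claim from the $\ell$-adic one.

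I expect the main obstacle to be making either transfer rigorous: ensuring that every map involved (Gysin, restriction, and the resulting composite $\mathrm{Res}\circ\mathrm{Gys}$-type operator whose invertibility is the real content) is induced by an algebraic correspondence, so that its rank is independent of the Weil cohomology. I would try the abelian-variety route first. The relevant maps on $H^1$ are all of the form pullback or pushforward of Picard or Albanese varieties along closed immersions $C\hookrightarrow S$. The determinant of the restricted pairing then becomes the discriminant of a $\mathbb{Q}$-bilinear form on $\Hom$-spaces, and \cite[Satz 2.13]{Rapoport-Zink} shows this discriminant is nonzero.
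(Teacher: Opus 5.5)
Your reduction matches the paper's. You identify $N^{d-1}$ on $E_2^{-d+1,2d-1}$ with the natural map $\Ker(\mathrm{Gys})\to\Coker(\mathrm{Res})$ between $H^1_{\mathrm{crys}}$ of the curves $C$ in $\mathfrak{X}_{\underline{k}}^{(d-1)}$ and the surfaces $S$ in $\mathfrak{X}_{\underline{k}}^{(d-2)}$. By adjointness and equal dimensions this becomes nondegeneracy of the Poincar\'e pairing on $\im\mathrm{Res}$.

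The difference is the last step. The paper cites \cite[Lemma 6.2.1]{Mokrane}, the crystalline counterpart of \cite[Satz 2.13]{Rapoport-Zink}. You instead rewrite the statement in terms of abelian varieties, where it no longer depends on the cohomology theory. Your first route works, and it closes more easily than you expect. Let $J=\prod_C\mathrm{Pic}^0_C$ with $\lambda$ the product of the canonical principal polarizations, and let $B\subseteq J$ be the image of $\prod_S\mathrm{Pic}^0_{S,\mathrm{red}}$ under the signed pullback homomorphism. Then $\im\mathrm{Res}$ corresponds to the rational Dieudonn\'e module of $B$. The restricted pairing is, up to a universal sign, the one attached to $\iota_B^\vee\lambda\iota_B$. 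This is the pullback of a polarization to an abelian subvariety, hence itself a polarization, hence an isogeny, so the pairing is nondegenerate.

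In particular you do not need to import \cite[Satz 2.13]{Rapoport-Zink} at all. The only inputs are standard. First, $H^1_{\mathrm{crys}}(-/K_0(\underline{k}))$ is functorially the rational Dieudonn\'e module of $\mathrm{Pic}^0_{\mathrm{red}}$, and this is exact on isogeny categories, so images correspond to image abelian subvarieties. Second, the cup product on a curve corresponds to the theta polarization. The paper's citation buys brevity. Your route shows that the $\ell$-adic and crystalline statements are the same statement about abelian varieties, namely positivity of polarizations, and it proves both at once.

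Three points in your write-up need correcting.
\begin{enumerate}
\item The realization-independent formulation is ``$\iota_B^\vee\lambda\iota_B$ is an isogeny.'' It is not ``a discriminant of a $\mathbb{Q}$-bilinear form on $\Hom$-spaces,'' which has no clear meaning here.
\item What you need from the realizations is exactness and the fact that a homomorphism is an isogeny iff its realization is an isomorphism. Faithfulness alone is not enough.
\item Drop the Katz--Messing route. Equality of characteristic polynomials of Frobenius does not determine the rank of a Frobenius-equivariant map such as $\mathrm{Gys}\circ\mathrm{Res}$. So that route cannot give the needed rank equality without falling back on the abelian-variety argument.
\end{enumerate}
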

\begin{proof}
  For simplicity, we omit Tate twists and coefficients. 
  It suffices to prove that the following natural map is an isomorphism:
  \[
  \Ker \left(H_{\text{crys}}^1(\mathfrak{X} _{{\underline{k}}}^{(d-1)})\xrightarrow{\mathrm{Gys}}H_{\text{crys}}^3(\mathfrak{X} _{{\underline{k}}}^{(d-2)})\right)\longrightarrow \Coker \left(H_{\text{crys}}^1(\mathfrak{X} _{{\underline{k}}}^{(d-2)})\xrightarrow{\mathrm{Res}}H_{\text{crys}}^1(\mathfrak{X} _{{\underline{k}}}^{(d-1)})\right)
  \]
  --- where $\mathrm{Gys}$ (resp.\ $\mathrm{Res}$) is the sum of the Gysin maps (resp.\ restriction maps), each multiplied by an appropriate sign $(\pm 1)$.  
  Thus, the assertion follows from \cite[Lemma 6.2.1]{Mokrane}. 
\end{proof}

\begin{prop}
  Let $k$ be an MLF. 
  Assume that $d\geq 2$ and $X$ has strictly semi-stable reduction.  
  Then, as a $\varphi$-module, $\Ker \left(N:H_{\textnormal{log-crys}}^d(\mathfrak{X} _{\underline{k}}/K_0(\underline{k}))\to H_{\textnormal{log-crys}}^d(\mathfrak{X} _{\underline{k}}/K_0(\underline{k}))\right)$ is mixed with weights in $[0,\, 2d-2]$. 
\end{prop}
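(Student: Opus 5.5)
The plan is to follow the proof of Proposition \ref{wt of kerN}, replacing Proposition \ref{p-adic isom of N^d} by the combination of Propositions \ref{p-adic isom of N^d} and \ref{p-adic isom of N^d-1}. We work with the $p$-adic weight spectral sequence
\[
E_1^{a,b}=\bigoplus_{r\ge\max(0,-a)}H_{\textnormal{crys}}^{b-2r}(\mathfrak{X}_{\underline{k}}^{(a+2r)}/K_0(\underline{k}))(-r)\Longrightarrow H_{\textnormal{log-crys}}^{a+b}(\mathfrak{X}_{\underline{k}}/K_0(\underline{k})).
\]
It degenerates at $E_2$, and since $k$ is an MLF, $E_2^{a,b}$ is pure of weight $b$ as a $\varphi$-module. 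Hence $H^d_{\textnormal{log-crys}}$ carries an increasing filtration whose graded pieces are $E_2^{a,d-a}$ for $-d\le a\le d$. The weight filtration splits canonically as a $\varphi$-module, because the pieces have distinct weights. This lets us regard $H^d_{\textnormal{log-crys}}=\bigoplus_a E_2^{a,d-a}$ compatibly with $\varphi$. The operator $N$ then maps the summand of weight $b$ to that of weight $b-2$, and it is induced by the morphism of spectral sequences displayed before Proposition \ref{p-adic isom of N^d}.

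Take $x\in\Ker N$ and write $x=\sum_a x_a$ with $x_a\in E_2^{a,d-a}$; since $N$ is $\varphi$-compatible up to twist and respects the weight decomposition, $\Ker N$ is a direct sum of its weight components. First, if $x_{-d}\neq 0$, then $N^d x_{-d}\neq 0$ by Proposition \ref{p-adic isom of N^d}. On the other hand, $N x_{-d}$ is the weight-$(2d-2)$ component of $Nx=0$, so $Nx_{-d}=0$ and hence $N^d x_{-d}=0$, a contradiction. So $x_{-d}=0$. The same argument applied to the weight-$(2d-1)$ component, using $N^{d-1}:E_2^{-d+1,2d-1}\to E_2^{d-1,1}$ from Proposition \ref{p-adic isom of N^d-1} (valid since $d\ge 2$), gives $x_{-d+1}=0$. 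Therefore
\[
\Ker N\subseteq E_2^{-d+2,2d-2}\oplus\cdots\oplus E_2^{d,0},
\]
which is mixed with weights in $[0,2d-2]$.

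The main obstacle is justifying that $N$ on $H^d_{\textnormal{log-crys}}$, restricted to the graded piece $E_2^{a,b}$, acts as the $E_2$-level map $E_2^{a,b}\to E_2^{a+2,b-2}$, so that the iterate $N^{d-1}$ on the weight-$(2d-1)$ component is literally the isomorphism of Proposition \ref{p-adic isom of N^d-1}. I would first try to argue via weights: because the filtration is split by $\varphi$-weights and $N$ lowers weight by exactly $2$, $N$ must send the weight-$b$ summand into the weight-$(b-2)$ summand. Its effect there is then determined by the graded map, which is the map induced by $\mathrm{id}\otimes(t^{-1}\otimes e)$ on $E_2$, exactly as in the proof of Proposition \ref{wt of kerN}.
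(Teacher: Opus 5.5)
Your proposal is correct and follows the paper's proof. The paper likewise combines Propositions~\ref{p-adic isom of N^d} and~\ref{p-adic isom of N^d-1} to get $\Ker N\subseteq E_2^{-d+2,2d-2}\oplus\cdots\oplus E_2^{d,0}$, and then concludes from the fact that $E_2^{a,b}$ is pure of weight $b$. Your splitting of $H^d_{\textnormal{log-crys}}$ into $\varphi$-weight components, with $N$ lowering weight by $2$ and acting through the $E_2$-level maps, spells out the step the paper leaves implicit.
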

\begin{proof}
  We consider the following $p$-adic weight spectral sequence:
  \[
   E_1^{a,b}
   = \bigoplus_{r \ge \max(0, -a)}
   H_{\textnormal{crys}}^{b-2r} ( \mathfrak{X} _{\underline{k}} ^{(a+2r)}/K_0(\underline{k}))(-r)
  \Longrightarrow
  H_{\textnormal{log-crys}}^{a+b}( \mathfrak{X} _{\underline{k}} /K_0(\underline{k})).
  \] 
  By Proposition \ref{p-adic isom of N^d} and \ref{p-adic isom of N^d-1}, we have 
  $$\Ker \left(H_{\textnormal{log-crys}}^d(\mathfrak{X} _{\underline{k}}/K_0(\underline{k}))\xrightarrow{N} H_{\textnormal{log-crys}}^d(\mathfrak{X} _{\underline{k}}/K_0(\underline{k}))\right)\subseteq  E_2^{-d+2,2d-2}\oplus E_2^{-d+3,2d-3}\oplus \cdots \oplus E_2^{d,0} .$$
  Since $E_2^{a,b}$ is pure of weight $b$,
  $\Ker \left(N:H_{\textnormal{log-crys}}^d(\mathfrak{X} _{\underline{k}}/K_0(\underline{k}))\to H_{\textnormal{log-crys}}^d(\mathfrak{X} _{\underline{k}}/K_0(\underline{k}))\right)$ is mixed with weights in $[0,\, 2d-2]$ as $\varphi$-modules. 
\end{proof}

\vspace{\baselineskip}
\section{Properties of Kummer-faithful fields}

In this section, we introduce Kummer-faithful fields, together with several variant notions of them, and review some of the properties they satisfy.

\begin{defn}[{\cite[Definition 1.5]{MochizukiIII}}, {\cite[Definition 1.2]{Hoshi}}, {\cite[Definition 3.1]{Murotani2}}]\label{Def of KF}
  Let $K$ be a field. 
  \begin{enumerate}[label=(\roman*)]
    \item If, for any finite extension $L/K$ and semi-abelian variety (resp.\ abelian variety, resp.\ torus) $A$ over $L$, it holds that
          \[
          A(L)_{\text{div}}=0,
          \]
          we shall say that $K$ is \textbf{pre-Kummer-faithful} (resp.\ \textbf{pre-AVKF}, resp.\ \textbf{pre-torally Kummer-faithful})
          (where ``AVKF" is understood as an abbreviation for ``Abelian Variety Kummer-Faithful"). 
    \item We shall say that $K$ is \textbf{Kummer-faithful} (resp.\ \textbf{AVKF}, resp.\ \textbf{torally Kummer-faithful}), if $K$ is perfect and pre-Kummer-faithful (resp.\ pre-AVKF, resp.\ pre-torally Kummer-faithful).
  \end{enumerate}
\end{defn}

It follows immediately from  the definitions of a Kummer-faithful field (resp.\ an AVKF-field, resp.\ a torally Kummer-faithful field) that the following assertions hold:
\begin{enumerate}
  \item If a perfect field $K$ is Kummer-faithful (resp.\ AVKF, resp.\ torally Kummer-faithful), then so is any perfect subfield of $K$. 
  \item Let $L/K$ be finite extension of a perfect field. 
        Then $K$ is Kummer-faithful (resp.\ AVKF, resp.\ torally Kummer-faithful) if and only if $L$ is Kummer-faithful (resp.\ AVKF, resp.\ torally Kummer-faithful). 
\end{enumerate}
By \cite[Remark 1.5.2]{MochizukiIII}, in the definition of ``Kummer-faithful", ``AVKF", and ``torally Kummer-faithful", it is equivalent to require only the case where $L=K$.
Furthermore, by \cite[Remark 1.5.4]{MochizukiIII}, any sub-$p$-adic field is Kummer-faithful. 
Note that a perfect field $K$ is Kummer-faithful if and only if $K$ is both AVKF and torally Kummer-faithful (cf.\ \cite[Proposition 2.3]{Ozeki-Taguchi}).

\vspace{\baselineskip}
In the remainder of this section, let $K$ be a field.
Then, for $p \in \mathfrak{Primes}$ and $n\in \mathbb{Z} _{>0}$, we shall write 
\begin{itemize}
  \item $K^{\times}\overset{\mathrm{def}}{=}K\backslash \{0\}$,
  \item $\mu _n(K)\overset{\mathrm{def}}{=}\{x\in K^{\times}| x^n=1\}$, 
  \item $\mu _{p^{\infty}}(K)\overset{\mathrm{def}}{=}\bigcup _{m\in \mathbb{Z} _{>0}}\mu _{p^m}(K)$.
\end{itemize} 
\vspace{\baselineskip}
\begin{defn}[{\cite[Definition 6.1]{Hoshi-Mochizuki-Tsujimura}}]\label{Def of stably mu-finite}
  Let $\ell \in \mathfrak{Primes}$. 
  If, for any finite extension $L/K$, $\mu _{\ell ^{\infty}}(L)$ is finite, we shall say that $K$ is \textbf{stably $\mu _{\ell ^{\infty}}$-finite}. 
\end{defn}

\begin{defn}[{\cite[Definition 6.1]{Hoshi-Mochizuki-Tsujimura}}, {\cite[Definition 2.5]{Ozeki}}]\label{Def of semi-AV-tor-finite}
  Let $K$ be a perfect field. 
  Let $\ell \in \mathfrak{Primes}$, and let $\Sigma$ be a nonempty subset of $\mathfrak{Primes}$. 
  \begin{enumerate}
    \item If, for any finite extension $L/K$ and any semi-abelian variety (resp.\ abelian variety) $A$ over $L$, $A(L)[\ell ^{\infty}]$ is finite, we shall say that $K$ is \textbf{$\ell ^{\infty}$-semi-AV-tor-finite} (resp.\ \textbf{$\ell ^{\infty}$-AV-tor-finite}). 
    \item If, for every $\ell \in \Sigma $, $K$ is $\ell ^{\infty}$-semi-AV-tor-finite (resp.\ $\ell ^{\infty}$-AV-tor-finite), we shall say that $K$ is \textbf{$\Sigma ^{\infty}$-semi-AV-tor-finite} (resp.\ \textbf{$\Sigma ^{\infty}$-AV-tor-finite}). 
    \item If, for any finite extension $L/K$ and any semi-abelian variety (resp.\ abelian variety) $A$ over $L$, $A(L)_{\text{tor}}$ is finite, we shall say that $K$ is \textbf{semi-AV-tor-finite} (resp.\ \textbf{AV-tor-finite}). 
  \end{enumerate}
\end{defn}
The following proposition is useful for studying the notions introduced in Definition \ref{Def of semi-AV-tor-finite}:
\begin{prop}[{\cite[Proposition 2.4]{Ozeki-Taguchi}}]\label{Ozeki-Taguchi, Prop. 2,4}
  Let $k$ be a field, $A$ a semi-abelian variety over $k$, and $K/k$ an algebraic extension. 
  Then, for each $\ell \in \mathfrak{Primes}$, the following conditions are equivalent:
  \begin{enumerate}[label=(\roman*)]
    \item It holds that $(A(K)[\ell ^{\infty}])_{\ell {\textnormal{-div}}}=0$.
    \item $A(K)[\ell^{\infty}]$ is finite. 
    \item It holds that $V_{\ell}(A)^{G_K}=0$.
  \end{enumerate}
  Moreover, we consider the following conditions:
  \begin{enumerate}[label=(\arabic*)]
    \item It holds that $A(K)_{\textnormal{div}}=0$.
    \item It holds that $(A(K)_{\textnormal{tor}})_{\textnormal{div}}=0$.
    \item For each $\ell \in \mathfrak{Primes}$, $A(K)[\ell ^{\infty}]$ is finite. 
  \end{enumerate}
  Then,
  \begin{itemize}
    \item $(1)\Rightarrow  (2)\Leftrightarrow (3)$ holds,
    \item $(1)\Leftrightarrow   (2)\Leftrightarrow (3)$ holds if $k$ is Kummer-faithful and $K/k$ is a Galois extension.
  \end{itemize}
\end{prop}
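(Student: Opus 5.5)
The plan is to prove the three equivalences (i)$\Leftrightarrow$(ii)$\Leftrightarrow$(iii) for each fixed $\ell$ by standard arguments on the Tate module $T=T_\ell(A)$ and on $A(K)[\ell^\infty]$. Afterwards I would deduce the implications among (1)--(3) and use Kummer-faithfulness for the converse.

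For (ii)$\Rightarrow$(i): $A(K)[\ell^\infty]$ is a torsion group, so if it is finite it is killed by some $\ell^n$. An $\ell$-divisible element $x=\ell^n y$ with $y$ in the group is then $0$.

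For (i)$\Rightarrow$(ii): since $A[\ell^\infty]\cong(\mathbb{Q}_\ell/\mathbb{Z}_\ell)^{m}$ (char $\neq\ell$; if $\ell=\mathrm{char}$, use the $p$-part, which is still a finite corank divisible group), the subgroup $A(K)[\ell^\infty]$ is a cofinitely generated $\mathbb{Z}_\ell$-module. It therefore decomposes as $(\mathbb{Q}_\ell/\mathbb{Z}_\ell)^s\oplus F$ with $F$ finite. Its $\ell$-divisible part is the divisible part $(\mathbb{Q}_\ell/\mathbb{Z}_\ell)^s$, and vanishing forces $s=0$.

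For (ii)$\Leftrightarrow$(iii): we have $V_\ell(A)^{G_K}=(T^{G_K})\otimes\mathbb{Q}$ and $T^{G_K}=\varprojlim A(K)[\ell^n]=T_\ell(A(K)[\ell^\infty])$. The Tate module of $(\mathbb{Q}_\ell/\mathbb{Z}_\ell)^s\oplus F$ is $\mathbb{Z}_\ell^s$, so $V_\ell(A)^{G_K}=0$ iff $s=0$ iff finiteness. The one point needing care is $\ell=\operatorname{char}k$, where I interpret $V_\ell$ via $A[\ell^\infty](K^{\mathrm{sep}})$; the same cofiniteness argument applies.

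Next, (1)$\Rightarrow$(2) is trivial, since $(A(K)_{\rm tor})_{\rm div}\subseteq A(K)_{\rm div}$.

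For (2)$\Rightarrow$(3): $A(K)_{\rm tor}=\bigoplus_\ell A(K)[\ell^\infty]$, and the divisible part of a direct sum of $\ell$-primary groups is the sum of the divisible parts. In each component, divisibility by integers prime to $\ell$ is automatic, so this divisible part equals the $\ell$-divisible part. Hence (2) means each $(A(K)[\ell^\infty])_{\ell\text{-div}}=0$, and by (i)$\Leftrightarrow$(ii) this is (3).

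For (3)$\Rightarrow$(2): the same decomposition reversed gives the implication, so (2)$\Leftrightarrow$(3).

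The main obstacle is (2)$\Rightarrow$(1) when $k$ is Kummer-faithful and $K/k$ is Galois. Let $D=A(K)_{\rm div}$. I would first show that $D$ is a divisible group, so that $D=D_{\rm tor}\oplus(\text{uniquely divisible part})$. Divisibility in $A(K)$ is not obviously divisibility in $D$, and I would handle this using finiteness of $A(K)[n]$, a compactness/Mittag-Leffler argument on the finite sets of $n$-th roots. Then $D_{\rm tor}\subseteq(A(K)_{\rm tor})_{\rm div}=0$, so $D$ is a $\mathbb{Q}$-vector space.

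Take $x\in D$. It lies in $A(L)$ for some finite subextension $L/k$ of $K/k$. I would then want to show $x\in A(L)_{\rm div}$, which vanishes by Kummer-faithfulness of $k$ (via the remark that the condition holds for finite extensions $L$). The key is Galois descent. Choose a Galois $L$, and for each $n$ let $y_n\in A(K)$ be the unique element with $ny_n=x$; uniqueness holds because $D$ is torsion-free. For $\sigma\in\Gal(K/L)$, the element $\sigma y_n$ also satisfies $n\,\sigma y_n=x$, and $\sigma y_n\in D$ since $D$ is Galois-stable. Uniqueness then gives $\sigma y_n=y_n$, so $y_n\in A(L)$ and $x\in A(L)_{\rm div}=0$. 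The delicate step is establishing the unique choice of $y_n$ inside $D$, i.e.\ that $D$ is itself divisible, and this is where I expect to spend effort.
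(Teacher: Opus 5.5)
The paper does not prove this proposition; it quotes [Ozeki--Taguchi, Prop.~2.4] without argument, so there is no in-paper proof to compare with. Your proof is correct.

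\textbf{The standard parts.} The equivalences (i)$\Leftrightarrow$(ii)$\Leftrightarrow$(iii) via $A(K)[\ell^\infty]\cong(\mathbb{Q}_\ell/\mathbb{Z}_\ell)^s\oplus F$ and $T_\ell(A)^{G_K}\cong\mathbb{Z}_\ell^s$ are fine. So are (1)$\Rightarrow$(2)$\Leftrightarrow$(3) via the primary decomposition of $A(K)_{\rm tor}$.

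\textbf{The compactness step.} The step you flag as delicate works as you suggest. Fix $x\in D$ and $n$, and set $S_m=\{y\in mA(K): ny=x\}$.
\begin{itemize}
\item Each $S_m$ is nonempty: choose $z$ with $nmz=x$ and take $y=mz$.
\item Each $S_m$ is finite, since it lies in a coset of $A(K)[n]$, which is finite for a semi-abelian variety.
\item The sets decrease along divisibility $m\mid m'$.
\end{itemize}
Hence $\bigcap_N S_{N!}\neq\emptyset$, and any element of it is an $n$-th root of $x$ lying in $D$.

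\textbf{Two small corrections.} First, $y_n$ is unique as an element of $D$, not of $A(K)$, since $A(K)[n]$ may be nonzero. Second, you do not need $L/k$ Galois. What you use is that $K/L$ is Galois, which is automatic from $K/k$ Galois, so that $A(K)^{\Gal(K/L)}=A(L)$.

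\textbf{A shorter route.} You can use (3) instead of (2) and skip proving that $D$ is divisible or torsion-free.
\begin{itemize}
\item Suppose $A(K)[\ell^\infty]$ is killed by $\ell^c$, and $x\in A(K)_{\rm div}\cap A(L)$.
\item For each $m$ pick $y\in A(K)$ with $\ell^{m+c}y=x$. For $\sigma\in\Gal(K/L)$ we have $\sigma y-y\in A(K)[\ell^\infty]$, so $\ell^c y$ is $\Gal(K/L)$-invariant.
\item Thus $\ell^c y\in A(L)$ and $x\in\ell^mA(L)$ for all $\ell$ and $m$.
\item Finally, $\bigcap_{\ell,m}\ell^mA(L)=A(L)_{\rm div}$ by B\'ezout: if $x=ag=bh$ with $ua+vb=1$, then $x=ab(uh+vg)$.
\end{itemize}
Your route gives a slightly stronger structural fact: under (2), $A(K)_{\rm div}$ is uniquely divisible. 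The price is the compactness argument. The direct route works one prime at a time and only uses finiteness of each $A(K)[\ell^\infty]$.
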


Let $\square$ be a one of the following properties:
\begin{itemize}
  \item stably $\mu _{\ell ^{\infty}}$-finite,
  \item $\ell ^{\infty}$-semi-AV-tor-finite,
  \item $\ell ^{\infty}$-AV-tor-finite,
  \item $\Sigma ^{\infty}$-semi-AV-tor-finite,
  \item $\Sigma ^{\infty}$-AV-tor-finite,
  \item semi-AV-tor-finite,
  \item AV-tor-finite.
\end{itemize}
Then it follows immediately from the definitions of $\square$ that the following assertions hold:
\begin{enumerate}
  \item If a perfect field $K$ is $\square$, then so is any perfect subfield of $K$. 
  \item Let $L/K$ be a finite extension of a perfect field. 
        Then $K$ is $\square$ if and only if $L$ is $\square$. 
\end{enumerate}
If $\square$ is not stably $\mu_{\ell^\infty}$-finite, then, as in \cite[Remark~1.5.4]{MochizukiIII}, it is equivalent in the definition of the $\square$-property to require only the case where $L=K$.
For a perfect field $K$ and a prime number $\ell \in \mathfrak{Primes}$, we have $\mu _{\ell ^{\infty}}(K)=\mathbb{G} _m(K)[\ell ^{\infty}]$.
Hence, if $K$ is $\ell ^{\infty}$-semi-AV-tor-finite, then $K$ is stably $\mu _{\ell ^{\infty}}$-finite. 
Furthermore, any semi-AV-tor-finite (resp.\ AV-tor-finite) field is $\mathfrak{Primes}^{\infty}$-semi-AV-tor-finite (resp.\ $\mathfrak{Primes}^{\infty}$-AV-tor-finite). 
By \cite[Proposition 2.9]{Ozeki-Taguchi}, any sub-$p$-adic field is semi-AV-tor-finite. 
If a perfect field $K$ is Kummer-faithful, then $K$ is $\mathfrak{Primes}^{\infty}$-semi-AV-tor-finite.
Furthermore, if $K$ is a Galois extension of Kummer-faithful field, then the converse also holds by Proposition \ref{Ozeki-Taguchi, Prop. 2,4}. 
Finally, by \cite[Theorem 3.9]{Ozeki}, for a Galois extension of an MLF, a field is  semi-AV-tor-finite if and only if it is Kummer-faithful and has finite residue field.
In addition, the following criterion for $\ell^{\infty}$-semi-AV-tor-finiteness holds:

\begin{prop}[cf.\ {\cite[Proposition 2.3]{Ozeki-Taguchi}}]\label{finite=T-finite+A-finite}
  The following conditions are equivalent:
  \begin{enumerate}[label=(\roman*)]
    \item For any semi-abelian variety $B$ over $K$, it holds that $B(K)[\ell ^{\infty}]_{\ell \textnormal{-div}}=0$.
    \item For any finite extension $L/K$, it holds that $\mathbb{G} _m(L)[\ell ^{\infty}]_{\ell \textnormal{-div}}=0$, and for any abelian variety $A$ over $K$, it holds that $A(K)[\ell ^{\infty}]_{\ell \textnormal{-div}}=0$.
  \end{enumerate}
\end{prop}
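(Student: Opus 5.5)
The statement to prove is Proposition \ref{finite=T-finite+A-finite}: condition (i), that $B(K)[\ell^{\infty}]_{\ell\text{-div}}=0$ for every semi-abelian $B$ over $K$, is equivalent to condition (ii), the same vanishing for $\mathbb{G}_m$ over every finite $L/K$ and for every abelian variety over $K$. The plan is to treat each direction separately. Throughout, $\ell$-divisible torsion is converted into rational Tate modules, using the equivalence (i)$\Leftrightarrow$(iii) of Proposition \ref{Ozeki-Taguchi, Prop. 2,4} applied with $k=K$ and $K/k$ trivial: for a semi-abelian $B$, the vanishing $B(K)[\ell^\infty]_{\ell\text{-div}}=0$ is equivalent to $V_\ell(B)^{G_K}=0$.

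\textbf{Direction (i)$\Rightarrow$(ii).} Abelian varieties are semi-abelian, so the statement for $A$ is immediate. For $\mathbb{G}_m$ over a finite extension $L/K$, I will pass to $K$ by Weil restriction: put $T=\mathrm{Res}_{L/K}\mathbb{G}_m$. When $L/K$ is separable, $T$ is a torus over $K$, hence semi-abelian, and $T(K)=\mathbb{G}_m(L)$ as groups. Therefore $T(K)[\ell^\infty]_{\ell\text{-div}}=\mathbb{G}_m(L)[\ell^\infty]_{\ell\text{-div}}$, which vanishes by (i). If $L/K$ is inseparable, I replace $L$ by its separable closure $L_s$ in $L$. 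Since $L/L_s$ is purely inseparable, every $\ell$-power root of unity in $L$ already lies in $L_s$ (when $\ell\neq\Char K$; when $\ell=\Char K$ both groups are trivial). Hence $\mathbb{G}_m(L)[\ell^\infty]=\mathbb{G}_m(L_s)[\ell^\infty]$ and the separable case applies.

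\textbf{Direction (ii)$\Rightarrow$(i).} Let $B$ be semi-abelian over $K$ with extension $0\to T\to B\to A\to 0$, where $T$ is a torus and $A$ an abelian variety. This gives an exact sequence of $G_K$-modules
\[
0\to V_\ell(T)\to V_\ell(B)\to V_\ell(A)\to 0,
\]
and hence $0\to V_\ell(T)^{G_K}\to V_\ell(B)^{G_K}\to V_\ell(A)^{G_K}$. By (ii) and Proposition \ref{Ozeki-Taguchi, Prop. 2,4} we have $V_\ell(A)^{G_K}=0$, so it suffices to show $V_\ell(T)^{G_K}=0$.

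To do this, choose a finite Galois extension $L/K$ splitting $T$, so that $T_L\cong\mathbb{G}_m^n$. Then
\[
V_\ell(T)^{G_K}\subseteq V_\ell(T)^{G_L}\cong \bigl(V_\ell(\mathbb{G}_m)^{G_L}\bigr)^{n},
\]
and this vanishes because $\mathbb{G}_m(L)[\ell^\infty]_{\ell\text{-div}}=0$ by (ii), again via Proposition \ref{Ozeki-Taguchi, Prop. 2,4}. It follows that $V_\ell(B)^{G_K}=0$, which is (i).

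The step I expect to need the most care is the bookkeeping in the case $\ell=\Char K$, together with the Weil restriction for inseparable $L/K$. When $\ell=\Char K$, $V_\ell$ of a torus is zero, so the torus contribution disappears and both directions reduce to the abelian-variety part. I also need to check that Proposition \ref{Ozeki-Taguchi, Prop. 2,4} is valid as stated in that characteristic. I will also check that the exactness of $V_\ell$ on the extension holds there; it does, since the torsion of $T$ is divisible and $\varprojlim$ of the relevant finite groups is exact.
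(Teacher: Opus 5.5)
Your proof is correct, but for (ii)$\Rightarrow$(i) it takes a somewhat different route from the paper.

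The paper argues directly with points. It assumes there is a nonzero $P_0\in B(K)[\ell^\infty]_{\ell\text{-div}}$ and forms the nonempty finite sets of $\ell^n$-th division points of $P_0$ in $B(K)[\ell^\infty]$. It then takes a compatible system $(P_n)$ in their nonempty inverse limit. The images $\overline{P}_n$ are $\ell$-divisible in $A(K)[\ell^\infty]$, hence zero. So $P_0$ is a nonzero $\ell$-divisible element of $T(K)[\ell^\infty]\subseteq \mathbb{G}_m(L)^{\dim T}[\ell^\infty]$, which is a contradiction.

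You instead use Proposition \ref{Ozeki-Taguchi, Prop. 2,4} to pass to rational Tate modules, and conclude from $0\to V_\ell(T)^{G_K}\to V_\ell(B)^{G_K}\to V_\ell(A)^{G_K}$. The paper's compatible system $(P_n)$ is exactly what yields a nonzero Galois-invariant vector in $V_\ell(B)$, so the two arguments are the same idea in different language. Yours is shorter given the cited proposition. The paper's is self-contained, needing only finiteness of $B(K)[\ell^n]$ and a compactness argument.

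On exactness: you only use the left-exact part $0\to V_\ell(T)\to V_\ell(B)\to V_\ell(A)$. This is automatic from left exactness of $\varprojlim$, so you do not need to verify surjectivity when $\ell=\Char K$.

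For (i)$\Rightarrow$(ii), the paper simply calls it clear. Your Weil-restriction argument supplies what is actually needed, since (i) only quantifies over semi-abelian varieties over $K$ itself. The reduction to the separable closure of $K$ in $L$ is also correct.
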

\begin{proof}
  Since $(i)\Rightarrow (ii)$ is clear, we prove $(ii)\Rightarrow (i)$. 
  Let $B$ be a semi-abelian variety over $K$, given as an extension of an abelian variety $A$ by a torus $T$. 
  Assume that $B(K)[\ell ^{\infty}]_{\ell \text{-div}}$ is nonzero, and choose $P_0\in B(K)[\ell ^{\infty}]_{\ell \text{-div}}\backslash \{0\}$. 
  For each $n\in \mathbb{Z} _{>0}$, define $X_{\ell ^n} \overset{\mathrm{def}}{=} \{P_n \in B(K)[\ell ^{\infty}]| \ell ^n P_n=P_0\}$. 
  Then $X_{\ell ^n}$ is a nonempty finite set. 
  For $n,m\in \mathbb{Z} _{>0}$ with $m\geq n$, define $f_{n,m}:X_{\ell ^m}\to X_{\ell ^n}$, $P_m\mapsto \ell ^{m-n}P_m$.
  Then $\{X_{\ell ^n}\}_{n\in \mathbb{Z} _{>0}}$ forms an inverse system, and hence its inverse limit $\varprojlim X_{\ell^n}$ is nonempty. 
  Take an element $(P_n)_{n \in \mathbb{Z} _{>0}} \in \varprojlim X_{\ell ^n}$. 
  Let $\overline{P}_n$ be the image of $P_n$ under the map $B(K)[\ell ^{\infty}]\to A(K)[\ell ^{\infty}]$. 
  Since $A(K)[\ell ^{\infty}]_{\ell \text{-div}}=0$, we have $\overline{P}_n=0$ for each $n$. 
  Thus, $P_1$ is a nonzero divisible element of $T(K)[\ell ^{\infty}]$. 
  On the other hand, assume that $T$ splits over a finite extension $L/K$.
  Then $T(K)$ may be identified with a subset of $\mathbb{G} _m(L)^{\dim T}$. 
  Thus, $T(K)[\ell ^{\infty}]_{\ell \text{-div}}=0$, which contradicts the fact that $P_1$ is a nonzero divisible element of $T(K)[\ell ^{\infty}]$. 
\end{proof}

\vspace{\baselineskip}
\section{Criterion for (quasi-)highly Kummer-faithful fields}

In this section, we study the (quasi-)high Kummer-faithfulness of various fields. 
In particular, we show that sub-$p$-adic fields are quasi-highly Kummer-faithful. 
Furthermore, we  give equivalent conditions for the vanishing of the coinvariants of \'etale cohomology in the case of algebraic extensions of FF's and CDVF's whose residue fields are algebraic extensions of some FF. 
As a result, we establish equivalences between (torally) Kummer-faithfulness and (quasi-)high Kummer-faithfulness.\\

In the remainder of this section, let $r$ be an integer, $d$ a nonnegative integer, and $i$ an integer with $0\le i\le 2d$ for a $d$-dimensional proper smooth variety $X$.\\

\begin{defn}[{\cite[Definition 2.6]{Ozeki-Taguchi}}]\label{Def of HKF}
    Let $k$ be a perfect field, and set $p_k\overset{\mathrm{def}}{=}\Char k$. 
    For $k$, consider the following condition:
    \begin{itemize}
    \item[$(\dagger)_{d,i,r}$] For any finite extension $k_H/k$, any $d$-dimensional proper smooth variety $X$ over $k_H$, each $\ell \in \mathfrak{Primes}\backslash \{p_k\}$, it holds that 
    $$H^i(X_{\overline{k}_H},\mathbb{Q}_{\ell}(r))_{H}=0$$ 
    --- where $H\overset{\mathrm{def}}{=}G_{k_H}$. 
    \end{itemize}
    \begin{enumerate}[label=(\roman*)]
        \item If, for every $0<i\leq 2d$, the field $k$ satisfies the condition $(\dagger)_{d,i,0}$, we shall say that $k$ is \textbf{quasi-highly Kummer-faithful}. 
        \item If, for every $d$ and every pair of integers $i,r$ with $i\neq 2r$, the field $k$ satisfies the condition $(\dagger)_{d,i,r}$, we shall say that $k$ is \textbf{highly Kummer-faithful}.  
    \end{enumerate}
\end{defn}

It follows immediately from the definition that 
\begin{center}
  $k$ is highly Kummer-faithful $\Longrightarrow $ $k$ is quasi-highly Kummer-faithful.
\end{center}
By Poincar\'e duality, the condition $(\dagger)_{d,i,r}$ is also equivalent to the following condition:
\begin{center} 
  ``For any finite extension $k_H/k$, any $d$-dimensional proper smooth variety $X$ over $k_H$, each $\ell \in \mathfrak{Primes}\backslash \{p_k\}$, it holds that $H^{2d-i}(X_{\overline{k}_H},\mathbb{Q}_{\ell}(d-r))^{H}=0$, where $H\overset{\mathrm{def}}{=}G_{k_H}$. "
\end{center} 
It follows immediately from the definition of a highly Kummer-faithful field (resp.\ quasi-highly Kummer-faithful field) that the following assertions hold:
\begin{enumerate}
  \item If a perfect field $k$ is highly Kummer-faithful (resp.\ quasi-highly Kummer-faithful), then so is any perfect subfield of $k$. 
  \item Let $k_H/k$ be a finite extension of a perfect field. 
  Then $k$ is highly Kummer-faithful (resp.\ quasi-highly Kummer-faithful) if and only if $k_H$ is highly Kummer-faithful (resp.\ quasi-highly Kummer-faithful). 
\end{enumerate}
Assume that a field $k$, not necessarily perfect, satisfies the condition $(\dagger)_{d,i,r}$.
Then, since $G_k$ and $G_{k^{\text{perf}}}$ coincide, the perfect closure $k^{\text{perf}}$ of $k$ also satisfies the condition $(\dagger)_{d,i,r}$.
Hence, for any integers $d(\geq 0), i, r$ satisfying $i\neq 2r$ (resp.\ $0< i\leq 2d$), if $k$ satisfies the condition $(\dagger)_{d,i,r}$ (resp.\ ($\dagger ) _{d,i,0}$), then $k^{\text{perf}}$ is highly Kummer-faithful (resp.\ quasi-highly Kummer-faithful).
Assume that a perfect field $k$ is quasi-highly Kummer-faithful and is a Galois extension of a Kummer-faithful field.
Then, by \cite[Proposition 2.9]{Ozeki-Taguchi}, the field $k$ is Kummer-faithful.
The notions of (quasi-)highly Kummer-faithful fields were introduced in order to characterize Kummer-faithful fields in terms of Galois representations.
It is therefore natural to investigate similarities between (quasi-)highly Kummer-faithful fields and Kummer-faithful fields.
As a first basic test case for measuring such similarities, we consider sub-$p$-adic fields.
However, it is known that $\mathbb{Q}_p$ is not highly Kummer-faithful (cf.\ \cite[p.\ 54]{Ozeki-Taguchi}).
Thus, Ozeki and Taguchi posed the following question:

\begin{ques}[{\cite[Question]{Ozeki-Taguchi}}]\label{ques}
  Let $k$ be a sub-$p$-adic field.
  Then is $k$ quasi-highly Kummer-faithful?
\end{ques}

\begin{prop}[{\cite[Proposition 2.11]{Ozeki-Taguchi}}]
  Let $k$ be an MLF.
  Assume that Conjectures \ref{monodromy weight conjecture} and \ref{p-adic monodromy weight conjecture} hold for any proper smooth variety over any finite extension of $k$ with semi-stable reduction. 
  Then $k$ is quasi-highly Kummer-faithful. 
\end{prop}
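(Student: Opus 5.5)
The plan is to reduce the coinvariant statement to an invariant statement via Poincar\'e duality, and then apply Jannsen's conditional vanishing results, Corollaries \ref{Jannsen Cor4.3} and \ref{Jannsen Cor5.2}, at the single twist that this produces. Since $\Char k=0$, the condition $(\dagger)_{d,i,0}$ has to be checked for \emph{every} prime $\ell$, including $\ell=p$. Fix a finite extension $k_H/k$, which is again an MLF, a $d$-dimensional proper smooth variety $X$ over $k_H$, and an integer $i$ with $0<i\le 2d$. By the reformulation of $(\dagger)_{d,i,r}$ given after Definition \ref{Def of HKF}, the vanishing $H^i(X_{\overline{k}_H},\mathbb{Q}_\ell)_{G_{k_H}}=0$ is equivalent to
\[
H^{i'}(X_{\overline{k}_H},\mathbb{Q}_\ell(d))^{G_{k_H}}=0, \qquad i'\overset{\mathrm{def}}{=}2d-i\in[0,2d-1].
\]
So it suffices to prove this for every prime $\ell$.

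The conjectures are assumed only for varieties with semi-stable reduction, so I first reduce to that case. By de Jong's alteration (\cite[Theorem 6.5]{deJong}) there are a finite extension $k'/k_H$, a $d$-dimensional proper smooth variety $X'$ over $k'$ with strictly semi-stable reduction (after passing to a geometrically integral component and enlarging $k'$ if necessary), and a proper, surjective, generically finite morphism $X'\to X_{k'}$. Pullback along it is $G_{k'}$-equivariant, and it is injective on $\mathbb{Q}_\ell$-cohomology because composing with the trace map gives multiplication by the degree. Hence
\[
H^{i'}(X_{\overline{k}_H},\mathbb{Q}_\ell(d))^{G_{k_H}}\subseteq H^{i'}(X_{\overline{k}_H},\mathbb{Q}_\ell(d))^{G_{k'}}\hookrightarrow H^{i'}(X'_{\overline{k}'},\mathbb{Q}_\ell(d))^{G_{k'}},
\]
and it is enough to show that the right-hand side vanishes. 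Since $k'$ is a finite extension of $k$ and $X'$ has semi-stable reduction, Conjectures \ref{monodromy weight conjecture} and \ref{p-adic monodromy weight conjecture} hold for $X'$ by hypothesis.

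Now apply Corollary \ref{Jannsen Cor4.3} for $\ell\neq p$ and Corollary \ref{Jannsen Cor5.2} for $\ell=p$, both to $X'$ over $k'$. They give $H^{i'}(X'_{\overline{k}'},\mathbb{Q}_\ell(r))^{G_{k'}}=0$ whenever $r\notin[\max(0,i'-d),\,i'/2]$. Take $r=d$: since $i'\le 2d-1$ we have $i'/2<d$, so $d$ lies outside this interval, and the required vanishing follows. As $k_H$, $X$, $i$ and $\ell$ were arbitrary, $k$ is quasi-highly Kummer-faithful.

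The only delicate step is the alteration reduction. One has to check that the hypothesis on the conjectures covers $X'$: this works because $k'$ is a finite extension of $k$, so nothing is needed beyond what is assumed. One also has to check that injectivity of pullback is compatible with passing from $G_{k_H}$-invariants to $G_{k'}$-invariants, which is clear because $G_{k'}\subseteq G_{k_H}$. Everything else is a direct application of the stated corollaries.
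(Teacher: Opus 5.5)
Your proposal is correct and follows the paper's route. The paper's proof simply cites Corollaries \ref{Jannsen Cor4.3} and \ref{Jannsen Cor5.2} together with de Jong's alteration, and you have filled in the implicit details: Poincar\'e duality to pass to $H^{2d-i}(X_{\overline{k}_H},\mathbb{Q}_\ell(d))^{G_{k_H}}$, injectivity of pullback along the alteration, covering all $\ell$ including $\ell=p$, and the check that $r=d$ lies outside $[\max(0,i'-d),\,i'/2]$ because $i'\le 2d-1$.
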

\begin{proof}
  The assertion follows from Corollaries \ref{Jannsen Cor4.3} and \ref{Jannsen Cor5.2} together with de Jong's alteration. 
\end{proof}
\vspace{\baselineskip}
In the remainder of this section, we show (quasi-)highly Kummer-faithfulness for various fields.
In Theorem \ref{QHKFness of sub-p-adic fields}, we give an affirmative answer to Question \ref{ques} without assuming Conjectures \ref{monodromy weight conjecture} and \ref{p-adic monodromy weight conjecture}.

\begin{prop}
  Let $k$ be an FF.
  Then $k$ is highly Kummer-faithful. 
\end{prop}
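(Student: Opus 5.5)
We must show: for a finite field $k$, every finite extension $k_H/k$ (again a finite field, say $\mathbb{F}_q$), every $d$-dimensional proper smooth variety $X$ over $k_H$, every $\ell\neq p$, and all integers $i,r$ with $i\neq 2r$, we have $H^i(X_{\overline{k}_H},\mathbb{Q}_\ell(r))_{H}=0$. Since any finite extension of a finite field is again finite, it suffices to fix $k=\mathbb{F}_q$ and $H=G_k$. Because $G_k\cong\widehat{\mathbb{Z}}$ is topologically generated by the geometric Frobenius $\Frob_q$, the coinvariants of a continuous finite-dimensional $\mathbb{Q}_\ell$-representation $V$ are $V/(\Frob_q-1)V$. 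For a finite-dimensional space, this quotient vanishes if and only if $\Frob_q-1$ is injective, i.e.\ if and only if $1$ is not an eigenvalue of $\Frob_q$ on $V$.

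The key input is Deligne's theorem (the Weil conjectures) for proper smooth varieties over finite fields: every eigenvalue of $\Frob_q$ on $H^i(X_{\overline{k}},\mathbb{Q}_\ell)$ is an algebraic number all of whose complex absolute values equal $q^{i/2}$. Deligne states this for projective smooth $X$; for merely proper smooth $X$, purity is obtained by reducing to the projective case via de Jong's alterations, as the paper does elsewhere. The alteration gives a projective smooth $X'$ with a generically finite surjection $X'\to X$, and the trace argument shows that $H^i(X_{\overline{k}},\mathbb{Q}_\ell)$ injects $G_k$-equivariantly into $H^i(X'_{\overline{k}},\mathbb{Q}_\ell)$. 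This is the step requiring the most care, but it is standard.

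Twisting by $\mathbb{Q}_\ell(r)$ multiplies each Frobenius eigenvalue by $q^{-r}$, since geometric Frobenius acts on $\mathbb{Q}_\ell(1)$ by $q^{-1}$. The eigenvalues on $H^i(X_{\overline{k}},\mathbb{Q}_\ell(r))$ therefore have absolute value $q^{i/2-r}$, which differs from $1$ precisely because $i\neq 2r$. Hence $1$ is not an eigenvalue, $\Frob_q-1$ is invertible, and the coinvariants vanish.

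Applying this for every $d$, every $k_H$ and every $\ell\neq p$ gives $(\dagger)_{d,i,r}$ for all $i\neq 2r$. Since finite fields are perfect, $k$ is highly Kummer-faithful.
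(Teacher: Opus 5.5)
Your proposal is correct and follows the same route as the paper: the paper's one-line proof invokes purity of weight $i$ of $H^i(X_{\overline{k}},\mathbb{Q}_\ell)$ (Deligne), so that the twist by $\mathbb{Q}_\ell(r)$ with $i\neq 2r$ has nonzero weight and $1$ cannot be a Frobenius eigenvalue. You have spelled out what the paper leaves implicit: coinvariants as the cokernel of $\Frob_q-1$, the effect of the Tate twist, and the passage from proper to projective (which Weil II together with Poincar\'e duality would also give directly).
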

\begin{proof}
  Let $\ell \in \mathfrak{Primes}\backslash \{\Char k\}$ and let $X$ be a proper smooth variety over $k$. 
  Then the $G_k$-representation $H^i(X_{\overline{k}},\mathbb{Q} _{\ell})$ is pure of weight $i$. 
\end{proof} 

\begin{prop}
  Let $k$ be a PLF.
  Then $k^{\textnormal{perf}}$ is quasi-highly Kummer-faithful. 
\end{prop}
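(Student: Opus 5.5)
The plan is to reduce quasi-high Kummer-faithfulness of $k^{\text{perf}}$ to the condition $(\dagger)_{d,i,0}$ for $k$ itself, using the remark after Definition \ref{Def of HKF}. Since every finite extension $k_H$ of a PLF is again a PLF, it suffices to show the following for an arbitrary PLF $k$, any $d$-dimensional proper smooth $X$ over $k$, any $\ell\neq p$ and any $0<i\le 2d$:
\[
H^i(X_{\overline{k}},\mathbb{Q}_\ell)_{G_k}=0 .
\]
By Poincar\'e duality this is equivalent to $H^{2d-i}(X_{\overline{k}},\mathbb{Q}_\ell(d))^{G_k}=0$ for $0\le 2d-i<2d$.

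For this vanishing I will use weights, as in Theorem \ref{vanishing over MLF}. In positive characteristic the monodromy weight conjecture is a theorem (Ito; Remark \ref{remark of monodromy weight conjecture}). Hence Corollary \ref{Jannsen Cor4.3} applies unconditionally to $X$ over the PLF $k$: the representation $H^{j}(X_{\overline{k}},\mathbb{Q}_\ell)^{I_k}$ is mixed with weights in $[\max(0,2j-2d),\,j]$. The same then holds for every finite extension $k'$ of $k$, with the same weight bounds.

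Now set $j=2d-i<2d$. The representation $H^{j}(X_{\overline{k}},\mathbb{Q}_\ell(d))^{I_k}$ has weights in $[\max(0,2j-2d)-2d,\; j-2d]$. Since $j<2d$, the upper bound is negative, so weight $0$ does not occur. Therefore the Frobenius invariants vanish and $H^{j}(X_{\overline{k}},\mathbb{Q}_\ell(d))^{G_k}=0$. This is exactly the last statement of Corollary \ref{Jannsen Cor4.3} with $r=d\notin[\max(0,j-d),\,j/2]$, because $j/2<d$.

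Two technical points need care. First, Jannsen's corollary is stated for $X$ with semi-stable reduction once one passes to a finite extension, and de Jong's alteration is used to arrange this. I would reduce in the standard way: a proper smooth $X$ over a PLF admits an alteration $X'\to X$ with $X'$ having strictly semi-stable reduction over a finite extension $k'$, and $H^j(X_{\overline{k}})\hookrightarrow H^j(X'_{\overline{k}})$ by the trace argument. Invariants only grow under restriction to $G_{k'}$, so vanishing for $X'$ over $k'$ gives vanishing for $X$ over $k$.

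Second, de Jong's alteration in characteristic $p$ is only generically étale after a purely inseparable extension, or yields a separable alteration over $k^{\text{perf}}$-type extensions. I expect this to be the main obstacle, and it is presumably why the statement is phrased for $k^{\text{perf}}$. I would handle it by noting that $G_{k^{1/p^n}}=G_k$ and that étale cohomology is insensitive to purely inseparable base change. So one may work over finite purely inseparable extensions of $k$, which are again PLFs, and apply Ito's theorem and Jannsen's argument there. This gives $(\dagger)_{d,i,0}$ for $k$, and hence for $k^{\text{perf}}$.
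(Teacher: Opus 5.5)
Your proposal is correct and is essentially the paper's proof: Ito's theorem makes Corollary \ref{Jannsen Cor4.3} unconditional over every PLF, Poincar\'e duality turns the coinvariant vanishing into $H^{2d-i}(X_{\overline{k}},\mathbb{Q}_\ell(d))^{G_k}=0$ because $d>\frac{2d-i}{2}$, and the passage to $k^{\textnormal{perf}}$ uses $G_k=G_{k^{\textnormal{perf}}}$. One correction to your commentary: the statement is phrased for $k^{\textnormal{perf}}$ only because (quasi-)high Kummer-faithfulness is defined just for perfect fields, not because of inseparability in de Jong's alteration; that inseparability is harmless anyway, as you note, since pullback along an alteration is injective on $\mathbb{Q}_\ell$-cohomology and purely inseparable extensions do not change the absolute Galois group.
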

\begin{proof}
  By Remark \ref{remark of monodromy weight conjecture} and Corollary \ref{Jannsen Cor4.3}, for every $r\notin \left[\max(0, i-d),\, \frac{i}{2}\right]$, the field $k$ satisfies the condition $(\dagger)_{d,i,r}$. 
  Thus, $k^{\text{perf}}$ is quasi-highly Kummer-faithful. 
\end{proof}

\begin{prop}\label{f.g. over HKF}
   Let $K$ be a finitely generated field extension of a field $k$, $X$ a $d$-dimensional proper smooth variety over $K$, and $\ell \in \mathfrak{Primes}\backslash \{\Char k\}$.
   If $k$ satisfies the condition $(\dagger)_{d,i,r}$, then we have 
   $$H^i(X_{\overline{K}},\mathbb{Q} _\ell(r))_{G_{K}}=0.$$
   In particular, if $k$ is highly Kummer-faithful (resp.\ quasi-highly Kummer-faithful), then $K^{\textnormal{perf}}$ is highly Kummer-faithful (resp.\ quasi-highly Kummer-faithful). 
\end{prop}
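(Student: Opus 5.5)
The plan is to spread $X$ out over a smooth base, specialize to a closed point whose residue field is finite over $k$, and use smooth proper base change to compare Galois actions. The coinvariants under $G_K$ then become a quotient of coinvariants under the Galois group of a finite extension of $k$, which vanish by $(\dagger)_{d,i,r}$.

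\emph{Reduction to $k$ perfect.} Since $G_k=G_{k^{\text{perf}}}$, the field $k^{\text{perf}}$ also satisfies $(\dagger)_{d,i,r}$. The compositum $K k^{\text{perf}}$ is finitely generated over $k^{\text{perf}}$ and purely inseparable over $K$, so $G_{Kk^{\text{perf}}}=G_K$. Hence we may replace $(k,K,X)$ by $(k^{\text{perf}}, Kk^{\text{perf}}, X_{Kk^{\text{perf}}})$ and assume $k$ is perfect.

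\emph{Spreading out.} Let $k'$ be the algebraic closure of $k$ in $K$. It is finite over $k$, so $k'$ satisfies $(\dagger)_{d,i,r}$ as well, since finite extensions of $k'$ are finite extensions of $k$. Because $k'$ is perfect, $K/k'$ is a regular, finitely generated extension. Hence $K$ is the function field of a geometrically integral smooth variety $S$ over $k'$. After shrinking $S$, the variety $X$ spreads out to a proper smooth morphism $f\colon \mathfrak{Y}\to S$ with generic fibre $X$. Since $X$ is geometrically integral of dimension $d$, after further shrinking every fibre is a geometrically integral proper smooth variety of dimension $d$. Then $\mathcal{F}=R^if_*\mathbb{Q}_\ell(r)$ is a lisse sheaf on $S$. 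Its geometric generic stalk is $H^i(X_{\overline{K}},\mathbb{Q}_\ell(r))$, and $G_K$ acts through the surjection $G_K\twoheadrightarrow \pi_1(S,\bar\eta)$, surjective because $S$ is normal. Therefore
\[H^i(X_{\overline{K}},\mathbb{Q}_\ell(r))_{G_K}=H^i(X_{\overline{K}},\mathbb{Q}_\ell(r))_{\pi_1(S)}.\]

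\emph{Specialization.} Choose a closed point $s\in S$. Its residue field $k(s)$ is finite over $k'$, hence a finite, perfect extension of $k$. The decomposition map $G_{k(s)}\to\pi_1(S)$ is defined up to conjugacy by choosing a path from $\bar s$ to $\bar\eta$. By smooth proper base change, the specialization isomorphism
\[H^i(X_{\overline{K}},\mathbb{Q}_\ell(r))\cong H^i(\mathfrak{Y}_{\bar s},\mathbb{Q}_\ell(r))\]
is $G_{k(s)}$-equivariant, with $G_{k(s)}$ acting on the left side through $\pi_1(S)$. Since $G_{k(s)}$ acts through a subgroup of $\pi_1(S)$, coinvariants under $\pi_1(S)$ are a quotient of coinvariants under $G_{k(s)}$. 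The latter are $H^i(\mathfrak{Y}_{\bar s},\mathbb{Q}_\ell(r))_{G_{k(s)}}$. These vanish by $(\dagger)_{d,i,r}$ applied with $k_H=k(s)$ and the $d$-dimensional proper smooth variety $\mathfrak{Y}_s$. This gives the vanishing statement.

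\emph{The ``in particular'' clause.} A finite extension of $K^{\text{perf}}$ is of the form $L^{\text{perf}}$ for some finite extension $L/K$, and $L$ is again finitely generated over $k$. Moreover $G_{L^{\text{perf}}}=G_L$, so the vanishing for every such $L$, every $d$, and the relevant pairs $(i,r)$ gives the (quasi-)highly Kummer-faithful property of $K^{\text{perf}}$.

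The main point requiring care is the spreading-out step. One must ensure that the base is smooth and geometrically integral over a field finite over $k$; this is why we pass to $k$ perfect and to $k'$. One must also ensure that the special fibre is again a geometrically integral proper smooth variety of dimension $d$ over a finite extension of $k$, so that the hypothesis $(\dagger)_{d,i,r}$ literally applies to it. The compatibility of the specialization isomorphism with Galois actions is standard once the lisse sheaf is in place.
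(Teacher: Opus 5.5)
Your proposal is correct and follows essentially the same route as the paper. In both, $X$ is spread out to a proper smooth family over a normal base, so that $G_K\to\pi_1(S)$ is surjective, and proper base change at a closed point $s$ with $\kappa(s)$ finite over $k$ shows the $G_K$-coinvariants are a quotient of the $G_{\kappa(s)}$-coinvariants, which vanish by $(\dagger)_{d,i,r}$. Your preliminary passage to $k^{\mathrm{perf}}$ and to the relative algebraic closure $k'$ is harmless but unnecessary, since the argument only uses that the base is normal and integral, not that it is smooth or geometrically integral over a field.
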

\begin{proof}
  There exists a normal integral scheme $S$ over $k$ whose function field $k(S)$ coincides with $K$.
  By spreading out, there exist an open subscheme $U \subseteq S$ and a model $\mathfrak{X}$ over $U$ such that the structure morphism $f_U:\mathfrak{X}\to U$ is proper, smooth, and geometrically irreducible, and such that the generic fiber of $\mathfrak{X}$ is isomorphic to $X$.
  Let $s$ be a closed point of $U$ and $\overline{s}$ a geometric point over $s$. 
  Note that the residue field $\kappa(s)$ of $s$ is a finite extension of $k$.
  By the proper base change theorem, there is a canonical isomorphism  $({R^i{f_U} _*}\mathbb{Q} _{\ell})_{\overline{s}} \cong H^i(\mathfrak{X} _{\overline{\kappa (s)}},\mathbb{Q} _{\ell})$, where  $({R^i{f_U} _*}\mathbb{Q} _{\ell})_{\overline{s}}$ is the stalk of ${R^i{f_U} _*}\mathbb{Q} _{\ell}$ at $\overline{s}$.
  Let $\pi ^{\text{\'et}} _1(U,\overline{s})$ be the \'etale fundamental group of $U$ with basepoint $\overline{s}$.
  Since ${R^i{f_U} _*}\mathbb{Q} _{\ell}$ is a lisse sheaf, the stalk $({R^i{f_U} _*}\mathbb{Q} _{\ell})_{\overline{s}}$ is $\pi ^{\text{\'et}} _1(U,\overline{s})$-representation, and the action of $G_{\kappa (s)}$ (resp.\ $G_K$) on $H^i(\mathfrak{X} _{\overline{\kappa (s)}},\mathbb{Q} _{\ell})$ (resp.\ $H^i(X _{\overline{K}},\mathbb{Q} _{\ell})$) may be regarded as factoring through $\pi ^{\text{\'et}} _1(U,\overline{s})$. 
  Since $U$ is normal, the natural morphism $G_K\to \pi ^{\text{\'et}} _1(U,\overline{s})$ is surjective. 
  Thus, we have $H^i(X_{\overline{K}},\mathbb{Q} _\ell(r))_{G_{K}}=0$. 
\end{proof}

\begin{thm}\label{QHKFness of sub-p-adic fields}
    Let $k$ be a sub-$p$-adic field.
    Then $k$ is quasi-highly Kummer-faithful. 
\end{thm}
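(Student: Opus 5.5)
Since $k$ is sub-$p$-adic, it embeds into a field $K$ that is finitely generated over $\mathbb{Q}_p$. The plan is to prove quasi-high Kummer-faithfulness for (the perfect closure of) $K$ and then pass to the subfield $k$. This reduction is harmless: $k$ has characteristic $0$, so it is perfect. Quasi-high Kummer-faithfulness descends to perfect subfields, and the condition is stable under finite extensions. Moreover, $K$ has characteristic $0$, so $K^{\text{perf}}=K$.

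Next I check that $\mathbb{Q}_p$ satisfies $(\dagger)_{d,i,0}$ for every $d\ge 1$ and every $0<i\le 2d$. Any finite extension $k_H/\mathbb{Q}_p$ is an MLF. For any $d$-dimensional proper smooth $X$ over $k_H$ and any prime $\ell\neq p$, Theorem \ref{vanishing over MLF} gives $H^i(X_{\overline{k}_H},\mathbb{Q}_\ell)_{G_{k_H}}=0$ whenever $i\neq 0$. The same vanishing for $\ell=p$ follows from Theorem \ref{p-adic vanishing over MLF}. The condition $(\dagger)$ only quantifies over $\ell$ different from the characteristic, which here is $0$, so $p$ itself must be covered; this is exactly why the $p$-adic result is needed. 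The case $d=0$ is vacuous, since then $0<i\le 0$ is impossible.

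Now apply Proposition \ref{f.g. over HKF} with base field $\mathbb{Q}_p$ and the finitely generated extension $K$. Its hypothesis, $(\dagger)_{d,i,0}$ for $\mathbb{Q}_p$, was just verified, so it yields $H^i(X_{\overline{K}},\mathbb{Q}_\ell)_{G_K}=0$ for all $0<i\le 2d$ and all primes $\ell$. The condition $(\dagger)$ requires this vanishing over every finite extension $K'/K$, not just over $K$. But $K'$ is again finitely generated over $\mathbb{Q}_p$, so the same argument applies to it. Hence $K$ is quasi-highly Kummer-faithful, and therefore so is $k$.

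The main point requiring care is the use of Proposition \ref{f.g. over HKF} when $\ell=p$. Its spreading-out argument specializes $X$ to the residue fields of closed points of a model over $U$. These residue fields are finite over $\mathbb{Q}_p$, hence MLF's, so the argument is consistent. It also needs $\mathbb{Q}_p$-coefficient sheaves to be lisse with the proper base change isomorphism. For $\ell=p$ on a $\mathbb{Q}_p$-scheme this holds, because $p$ is invertible on the base, which has characteristic $0$. So the hypothesis $\ell\neq\Char k$ in Proposition \ref{f.g. over HKF} is satisfied even for $\ell=p$, and no further obstacle is expected.
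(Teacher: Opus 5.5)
Your proposal is correct and follows the paper's proof. The route is the same: MLF's satisfy $(\dagger)_{d,i,0}$ by Theorems \ref{vanishing over MLF} and \ref{p-adic vanishing over MLF}, Proposition \ref{f.g. over HKF} carries this to finite extensions of fields finitely generated over $\mathbb{Q}_p$, and quasi-high Kummer-faithfulness then descends to the perfect subfield $k$; your remarks on $\ell=p$ (admissible since $\Char \mathbb{Q}_p=0$) and on finite extensions of $K$ just make explicit details the paper leaves implicit.
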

\begin{proof}
  By Theorems \ref{vanishing over MLF} and \ref{p-adic vanishing over MLF}, MLF's are quasi-highly Kummer-faithful. 
  Note that any perfect subfield of quasi-highly Kummer-faithful field is quasi-highly Kummer-faithful.
  Thus, by Proposition \ref{f.g. over HKF}, $k$ is quasi-highly Kummer-faithful.  
\end{proof}
\vspace{\baselineskip}
 In the remainder of this section, we give equivalent conditions for the vanishing of the coinvariants  of \'etale cohomology over algebraic extensions of some FF and over CDVF's whose residue fields are algebraic extensions of some FF.
 As a result, we show the equivalence between (torally) Kummer-faithful fields and (quasi-)highly Kummer-faithful fields.

 \begin{lem}\label{key lem of Gal rep}
    Let $\ell \in \mathfrak{Primes}$, and let $V$ be a $G_{\mathbb{F} _q}$-module over $\mathbb{Q} _{\ell}$. 
    Let $\mathbb{F} /\mathbb{F} _q$ be an algebraic extension, and assume that $G_{\mathbb{F} }^{\ell}$ is nontrivial. 
    If, for any finite extension $\mathbb{F} _{q^n}/\mathbb{F} _q$, it holds that $V^{G_{\mathbb{F} _{q^n}}}=0$, then we have $$V^{G_{\mathbb{F} }}=0.$$      
\end{lem}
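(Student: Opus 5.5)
The plan is to reduce the statement to a statement about the eigenvalues of the arithmetic Frobenius. Throughout, $V$ is tacitly a finite-dimensional continuous representation. Let $\varphi\in G_{\mathbb{F}_q}\cong\widehat{\mathbb{Z}}$ be the Frobenius. Since $G_{\mathbb{F}_{q^n}}$ is topologically generated by $\varphi^n$, we have
\[
V^{G_{\mathbb{F}_{q^n}}}=\Ker(\varphi^n-1).
\]
So the hypothesis says precisely that no eigenvalue of $\varphi$ on $V\otimes\overline{\mathbb{Q}}_\ell$ is a root of unity. I will show that if $W\overset{\mathrm{def}}{=}V^{G_{\mathbb{F}}}$ were nonzero, then $\varphi$ would have a root-of-unity eigenvalue on $W\subseteq V$, which is a contradiction.

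First, the structure of $G_{\mathbb{F}}$. Since $G_{\mathbb{F}_q}=\widehat{\mathbb{Z}}=\prod_{p'}\mathbb{Z}_{p'}$ is abelian, the closed subgroup $H\overset{\mathrm{def}}{=}G_{\mathbb{F}}$ is normal, so $W$ is stable under $\varphi$. Every closed subgroup of $\widehat{\mathbb{Z}}$ has the form $\prod_{p'}p'^{e_{p'}}\mathbb{Z}_{p'}$ with $e_{p'}\in\mathbb{Z}_{\geq 0}\cup\{\infty\}$, where $p'^{\infty}\mathbb{Z}_{p'}\overset{\mathrm{def}}{=}0$. The maximal pro-$\ell$ quotient of $H$ is its $\ell$-component, so the assumption that $G_{\mathbb{F}}^{\ell}$ is nontrivial means $e_\ell<\infty$. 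Consequently, in the quotient $\widehat{\mathbb{Z}}/H=\prod_{p'}\mathbb{Z}_{p'}/p'^{e_{p'}}\mathbb{Z}_{p'}$ the $\ell$-component is finite of order $\ell^{e_\ell}$, and the remaining factor $Q'$ is a pro-prime-to-$\ell$ group.

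Next, suppose $W\neq 0$ and pick an eigenvector of $\varphi$ on $W$. It is defined over some finite extension $E/\mathbb{Q}_\ell$; call its eigenvalue $\alpha$. The map $\sigma\mapsto(\text{eigenvalue of }\sigma)$ on the line spanned by this vector is a continuous character $\chi:\widehat{\mathbb{Z}}\to E^\times$. It is trivial on $H$, since $H$ acts trivially on $W$, so $\chi$ factors through $\widehat{\mathbb{Z}}/H$. Because the source is compact, the image lies in $\mathcal{O}_E^\times\cong\mu(E)_{\text{prime-to-}\ell}\times U$ with $U$ pro-$\ell$. Any continuous homomorphism from the pro-prime-to-$\ell$ group $Q'$ into this has trivial projection to $U$, so $\chi(Q')$ lies in the finite group of prime-to-$\ell$ roots of unity. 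The $\ell$-component of $\widehat{\mathbb{Z}}/H$ is finite. Hence $\chi$ has finite image, and $\alpha=\chi(\varphi)$ is a root of unity, say $\alpha^n=1$.

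Then the chosen eigenvector lies in $\Ker(\varphi^n-1)\otimes_{\mathbb{Q}_\ell}E=V^{G_{\mathbb{F}_{q^n}}}\otimes_{\mathbb{Q}_\ell}E$, which is $0$ by hypothesis, a contradiction. Therefore $V^{G_{\mathbb{F}}}=0$. The only delicate point is the step $e_\ell<\infty$ together with the splitting of $\mathcal{O}_E^\times$ into prime-to-$\ell$ torsion times a pro-$\ell$ group. This is exactly where nontriviality of $G_{\mathbb{F}}^\ell$ enters: without it, $\widehat{\mathbb{Z}}/H$ could contain $\mathbb{Z}_\ell$, which maps onto infinite-order elements of $U$ such as $1+\ell$.
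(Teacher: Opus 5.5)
Your proof is correct, but it is organized differently from the paper's. The paper never looks at eigenvalues. It fixes a $\mathbb{Z}_\ell$-lattice and passes to the open subgroups of $G_{\mathbb{F}}$ and $G_{\mathbb{F}_q}$ cut out by the kernel of reduction modulo $\ell$, so that both act through their procyclic maximal pro-$\ell$ quotients. It then uses that the $\ell$-part of $G_{\mathbb{F}}$ is open in $\mathbb{Z}_\ell$ to see that an open subgroup of $G_{\mathbb{F}}$ has the same image in $\GL_n(\mathbb{Z}_\ell)$ as some $G_{\mathbb{F}_{q^N}}$, whence $V^{G_{\mathbb{F}}}\subseteq V^{G_{\mathbb{F}_{q^N}}}=0$.

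You instead rewrite the hypothesis as ``no Frobenius eigenvalue is a root of unity''. You then take one Frobenius eigenline in $V^{G_{\mathbb{F}}}\otimes E$; it is $G_{\mathbb{F}_q}$-stable by continuity and density of $\varphi^{\mathbb{Z}}$, which you should say explicitly. Finally you show the resulting character of $\widehat{\mathbb{Z}}/G_{\mathbb{F}}$ has finite image using $\mathcal{O}_E^\times\cong\mu'\times(1+\mathfrak{m}_E)$.

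The underlying mechanism is the same in both. $\ell$-adic images are pro-$\ell$ up to finite index, and $G_{\mathbb{F}}^{\ell}\neq 1$ says exactly that $\widehat{\mathbb{Z}}/G_{\mathbb{F}}$ has finite $\ell$-part. Your rank-one reduction makes fully explicit the step the paper compresses into ``we may assume that the actions coincide''. It also matches the $\ell$-adic applications, where purity of nonzero weight means the Frobenius eigenvalues are Weil numbers of nonzero weight, hence not roots of unity.

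The paper's group-theoretic route avoids extending scalars. Its argument also yields, independently of the vanishing hypothesis, that $G_{\mathbb{F}_q}$ acts on $V^{G_{\mathbb{F}}}$ through a finite quotient. Finally, its first reduction step is reused in the proofs of $(iii)\Rightarrow(i)$ in Proposition~\ref{vanishing of coinvariants over alg ext over FF} and $(iv)\Rightarrow(i)$ in Proposition~\ref{l-adic vanishing over CDVF}.
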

\begin{proof}
  Let $n$ be the dimension of $V$, and fix an isomorphism $\Aut (V)\cong \GL _n(\mathbb{Q} _{\ell})$. 
  Then we may assume that the action $G_{\mathbb{F} }\to \GL _n(\mathbb{Q} _{\ell})$ factors through $\GL _n(\mathbb{Z} _{\ell})$. 
  By replacing $\mathbb{F}$ with the finite extension corresponding to the kernel of the composite homomorphism with the reduction map $G_{\mathbb{F} }\to \GL _n(\mathbb{Z} _{\ell})\to \GL _n(\mathbb{F} _{\ell})$, we may assume that the action $G_{\mathbb{F} }\to \GL _n(\mathbb{Q} _{\ell})$ factors through $\mathbb{Z} _{\ell}$ surjectively. 
  Similarly, by replacing $\mathbb{F}_q$ with a finite extension $\mathbb{F}_{q^m}$ if necessary, we may assume that the action $G_{\mathbb{F} _{q^m}}\to \GL _n(\mathbb{Q} _{\ell})$ factors through $\mathbb{Z} _{\ell}$ surjectively. 
  Therefore, by replacing $\mathbb{F}$ once again by a finite extension if necessary, we may assume that the actions of $G_{\mathbb{F}}$ and $G_{\mathbb{F}_{q^m}}$ on $V$ coincide. 
  Thus, we have 
  $$V^{G_\mathbb{F} }=V^{\mathbb{G} _{\mathbb{F} _{q^m}}}=0.$$ 
\end{proof}

\begin{prop}\label{vanishing of coinvariants over alg ext over FF}
  For $p\in \mathfrak{Primes}$, let $\mathbb{F} /\mathbb{F} _p$ be an algebraic extension, and let $\ell \in \mathfrak{Primes}\backslash \{p\}$. 
  Then the following conditions are equivalent:
  \begin{enumerate}[label=(\roman*)]
    \item $G_{\mathbb{F} }^{\ell}$ is nontrivial.
    \item For any proper smooth variety $X$ over $\mathbb{F} $ and every pair of integers $i,r$ with $i\neq 2r$, it holds that 
          $H^i(X_{\overline{\mathbb{F} }},\mathbb{Q} _{\ell}(r))^{G_{\mathbb{F} }}=0$. 
    \item For any proper smooth variety $X$ over $\mathbb{F} $ and every integer $i\neq 0$, it holds that $H^i(X_{\overline{\mathbb{F} }},\mathbb{Q} _{\ell})_{G_{\mathbb{F} }}=0$. 
  \end{enumerate}
\end{prop}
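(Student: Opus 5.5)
The plan is to prove the cycle $(i)\Rightarrow(ii)\Rightarrow(iii)\Rightarrow(i)$, using Lemma \ref{key lem of Gal rep} for the first step, Poincar\'e duality for the second, and an explicit abelian variety for the third.

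\textbf{Step $(i)\Rightarrow(ii)$.} Let $X$ be a proper smooth variety over $\mathbb{F}$. Since $X$ is of finite type, it descends to a proper smooth variety $X_0$ over some finite subfield $\mathbb{F}_q\subseteq\mathbb{F}$, so $X=X_0\times_{\mathbb{F}_q}\mathbb{F}$. The $G_{\mathbb{F}}$-action on $V\overset{\mathrm{def}}{=}H^i(X_{\overline{\mathbb{F}}},\mathbb{Q}_\ell(r))$ is then the restriction of the $G_{\mathbb{F}_q}$-action on $H^i(X_{0,\overline{\mathbb{F}}},\mathbb{Q}_\ell(r))$. By Deligne's theorem, for every $n$ the $G_{\mathbb{F}_{q^n}}$-representation $V$ is pure of weight $i-2r\neq 0$. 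Hence no eigenvalue of $\Frob_{q^n}$ equals $1$, and $V^{G_{\mathbb{F}_{q^n}}}=0$. Lemma \ref{key lem of Gal rep} then gives $V^{G_{\mathbb{F}}}=0$.

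\textbf{Step $(ii)\Rightarrow(iii)$.} We may assume $X$ is geometrically connected of dimension $d$. For a finite-dimensional representation $V$ we have $(V_G)^\vee\cong(V^\vee)^G$. Poincar\'e duality gives $H^i(X_{\overline{\mathbb{F}}},\mathbb{Q}_\ell)^\vee\cong H^{2d-i}(X_{\overline{\mathbb{F}}},\mathbb{Q}_\ell(d))$. The condition $2d-i\neq 2d$ is exactly $i\neq 0$, so (ii) applied with $(2d-i,d)$ yields (iii).

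\textbf{Step $(iii)\Rightarrow(i)$, by contraposition.} Suppose $G_{\mathbb{F}}^\ell$ is trivial. Since $G_{\mathbb{F}}$ is a closed subgroup of $\widehat{\mathbb{Z}}$, it is then pro-prime-to-$\ell$, and the same holds for every open subgroup.
\begin{enumerate}
\item Take an elliptic curve $E_0$ over a finite subfield $\mathbb{F}_q\subseteq\mathbb{F}$. The image of $G_{\mathbb{F}_q}$ in $\GL_2(\mathbb{Z}_\ell)$ acting on $H^1(E_{0,\overline{\mathbb{F}}},\mathbb{Q}_\ell)$ is virtually pro-$\ell$, because $\GL_2(\mathbb{Z}_\ell)$ has an open pro-$\ell$ subgroup. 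The image of $G_{\mathbb{F}}$ is a pro-prime-to-$\ell$ closed subgroup of it, hence finite.
\item Let $\mathbb{F}'/\mathbb{F}$ be the finite extension cutting out this finite image. Then $G_{\mathbb{F}'}$ acts trivially on $H^1(E_{\overline{\mathbb{F}}},\mathbb{Q}_\ell)$, so the coinvariants are two-dimensional.
\item To return to a variety over $\mathbb{F}$ itself, set $X\overset{\mathrm{def}}{=}\mathrm{Res}_{\mathbb{F}'/\mathbb{F}}(E_{\mathbb{F}'})$. This is an abelian variety, hence proper smooth over $\mathbb{F}$.
\item Its $H^1$ is the induced representation $\mathrm{Ind}_{G_{\mathbb{F}'}}^{G_{\mathbb{F}}}H^1(E_{\overline{\mathbb{F}}},\mathbb{Q}_\ell)$. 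By Shapiro's lemma for coinvariants, $H^1(X_{\overline{\mathbb{F}}},\mathbb{Q}_\ell)_{G_{\mathbb{F}}}\cong H^1(E_{\overline{\mathbb{F}}},\mathbb{Q}_\ell)_{G_{\mathbb{F}'}}\neq 0$, contradicting (iii).
\end{enumerate}
A fallback avoiding Weil restriction is to take the product of the Galois conjugates of $E_{\mathbb{F}'}$ and descend it by Galois descent.

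The main obstacle is this last step. One has to justify cleanly that the image of a pro-prime-to-$\ell$ group in $\GL_2(\mathbb{Z}_\ell)$ is finite, and that the Weil restriction or descent produces a proper smooth variety over $\mathbb{F}$ with the induced cohomology. The other two steps are formal given Lemma \ref{key lem of Gal rep} and Deligne's purity.
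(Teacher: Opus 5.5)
Your proof is correct. For $(i)\Rightarrow(ii)$ and $(ii)\Rightarrow(iii)$ it is the paper's argument: descent to some $\mathbb{F}_q$, purity of weight $i-2r\neq0$ and Lemma~\ref{key lem of Gal rep} for the first, and Poincar\'e duality, which the paper calls clear, for the second.

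The difference is in $(iii)\Rightarrow(i)$. The paper tests (iii) on $H^{2d}(X_{\overline{\mathbb{F}}},\mathbb{Q}_\ell)\cong\mathbb{Q}_\ell(-d)$. It says that after replacing $\mathbb{F}$ by a finite extension the action factors through $G_{\mathbb{F}}^\ell$, so vanishing coinvariants force $G_{\mathbb{F}}^\ell\neq1$. This tacitly uses (iii) over the finite extension, which the hypothesis does not literally give, since coinvariants only grow when the group shrinks. For instance, take $\mathbb{F}$ the $\mathbb{Z}_\ell$-extension of $\mathbb{F}_p$ with $\ell$ odd and $p\not\equiv 1 \bmod \ell$. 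Then $G_{\mathbb{F}}^\ell=1$, yet $H^2(\mathbb{P}^1_{\overline{\mathbb{F}}},\mathbb{Q}_\ell)_{G_{\mathbb{F}}}=0$.

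Your elliptic curve, together with Weil restriction and Shapiro's lemma, is exactly what carries the nonvanishing from $G_{\mathbb{F}'}$ back to a variety over $\mathbb{F}$. So your write-up is the more complete one.

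If you want to keep the paper's test objects, there is a cheaper fix that avoids any finite extension. When $G_{\mathbb{F}}^\ell=1$, the cyclotomic character restricted to $G_{\mathbb{F}}$ has finite order $m$, since its image is a pro-prime-to-$\ell$ subgroup of $\mathbb{Z}_\ell^\times$. Then $X=\mathbb{P}^m_{\mathbb{F}}$ gives $H^{2m}(X_{\overline{\mathbb{F}}},\mathbb{Q}_\ell)_{G_{\mathbb{F}}}\cong\mathbb{Q}_\ell\neq0$.
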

\begin{proof}
  $(ii)\Rightarrow (iii)$ is clear. 

  $(iii)\Rightarrow (i)$: Let $X$ be a proper smooth variety over $\mathbb{F} $. 
  Consider the action of $G_{\mathbb{F}}$ on $H^{2d}(X_{\overline{\mathbb{F} }},\mathbb{Q} _{\ell})$. 
  By the argument in the proof of Lemma \ref{key lem of Gal rep}, after replacing $\mathbb{F} $ by a finite extension, we may assume that the action factors through $G_{\mathbb{F} }^{\ell}$. 
  On the other hand, since $H^{2d}(X_{\overline{\mathbb{F} }},\mathbb{Q} _{\ell})_{G_{\mathbb{F}} }=0$, it follows that $G_{\mathbb{F}} ^{\ell}$ is nontrivial. 

  $(i)\Rightarrow (ii)$: Let $X$ be a proper smooth variety over $\mathbb{F} $.  
  By descent, we may assume that $X$ is defined over some FF $\mathbb{F}_q$.
  Then the  $G_{\mathbb{F} _q}$-representation $H^i(X_{\overline{\mathbb{F} }_q},\mathbb{Q} _{\ell})$ is pure of weight $i$.  
  Thus, by Lemma \ref{key lem of Gal rep}, for $i\neq 2r$, we have 
  $H^i(X_{\overline{\mathbb{F} }},\mathbb{Q} _{\ell}(r))^{G_{\mathbb{F} }}=0$. 
\end{proof}

\begin{thm}\label{HKFness over alg ext over FF}
    For $p\in \mathfrak{Primes}$, let $\mathbb{F} /\mathbb{F} _p$ be an algebraic extension. 
    Then the following conditions are equivalent:
    \begin{enumerate}[label=(\roman*)]
      \item For every $\ell \in \mathfrak{Primes}\backslash \{p\}$, $G_{\mathbb{F} }^{\ell}$ is nontrivial. 
      \item $\mathbb{F}$ is highly Kummer-faithful. 
      \item $\mathbb{F}$ is quasi-highly Kummer-faithful. 
      \item $\mathbb{F}$ is torally Kummer-faithful. 
    \end{enumerate}
\end{thm}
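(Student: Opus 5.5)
We prove (i)$\Rightarrow$(ii)$\Rightarrow$(iii)$\Rightarrow$(i), and separately (i)$\Leftrightarrow$(iv).

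\textbf{(i)$\Rightarrow$(ii).} Fix a finite extension $\mathbb{F}'/\mathbb{F}$. We first check that $G_{\mathbb{F}'}^{\ell}$ is still nontrivial for every $\ell\neq p$. Since $G_{\mathbb{F}}$ is a closed subgroup of $\widehat{\mathbb{Z}}$, it is procyclic of the form $\prod_{\ell\in S}\mathbb{Z}_\ell$. Condition (i) says that $S$ contains every $\ell\neq p$. A finite-index open subgroup of this product still has nontrivial $\ell$-part for each $\ell\in S$, so the property passes to $\mathbb{F}'$.

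Now apply Proposition \ref{vanishing of coinvariants over alg ext over FF} (i)$\Rightarrow$(ii) to $\mathbb{F}'$. This gives $H^{2d-i}(X_{\overline{\mathbb{F}}'},\mathbb{Q}_\ell(d-r))^{G_{\mathbb{F}'}}=0$ whenever $2d-i\neq 2(d-r)$, i.e.\ whenever $i\neq 2r$. By the Poincar\'e-duality reformulation of $(\dagger)_{d,i,r}$, this is exactly $(\dagger)_{d,i,r}$ for $\mathbb{F}$. Hence $\mathbb{F}$ is highly Kummer-faithful.

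\textbf{(ii)$\Rightarrow$(iii).} This is immediate from the definitions.

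\textbf{(iii)$\Rightarrow$(i).} Fix $\ell\neq p$. Quasi-high Kummer-faithfulness gives $H^i(X_{\overline{\mathbb{F}}},\mathbb{Q}_\ell)_{G_{\mathbb{F}}}=0$ for $i\neq0$. This is condition (iii) of Proposition \ref{vanishing of coinvariants over alg ext over FF}, so that proposition gives $G_{\mathbb{F}}^\ell\neq1$.

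\textbf{(i)$\Rightarrow$(iv).} It suffices to show that a torus $T$ over $\mathbb{F}$ has $T(\mathbb{F})_{\mathrm{div}}=0$. The group $T(\mathbb{F})$ is torsion, since $\mathbb{F}$ is algebraic over $\mathbb{F}_p$. Moreover its $p$-part is trivial, because $\mathbb{G}_m$ has no $p$-torsion in characteristic $p$.

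So it is enough to show, for each $\ell\neq p$, that $T(\mathbb{F})[\ell^\infty]$ is finite, i.e.\ that $V_\ell(T)^{G_{\mathbb{F}}}=0$ by Proposition \ref{Ozeki-Taguchi, Prop. 2,4}. Over a finite field of definition, $V_\ell(T)$ is pure of weight $-2$. Lemma \ref{key lem of Gal rep} then yields $V_\ell(T)^{G_{\mathbb{F}}}=0$.

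A divisible element of a torsion group with finite $\ell$-primary parts is zero. Indeed, write such an element as a finite sum of its $\ell$-components; each component is divisible inside a finite $\ell$-group and therefore vanishes. Hence $T(\mathbb{F})_{\mathrm{div}}=0$, as required.

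\textbf{(iv)$\Rightarrow$(i).} Suppose $G_{\mathbb{F}}^\ell=1$ for some $\ell\neq p$. Then $\ell^\infty$ divides $[\mathbb{F}:\mathbb{F}_p]$ as a supernatural number. Pass to the finite extension $\mathbb{F}'=\mathbb{F}(\mu_\ell)$ (or $\mu_4$ if $\ell=2$). Over $\mathbb{F}'$ the degrees of the fields $\mathbb{F}_p(\mu_{\ell^n})$ are $\ell$-power times a bounded factor, so $\mu_{\ell^\infty}\subseteq\mathbb{F}'$.

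Then $\mathbb{G}_m(\mathbb{F}')_{\mathrm{div}}\supseteq\mu_{\ell^\infty}\neq0$, contradicting torally Kummer-faithfulness, which is stable under finite extensions.

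\textbf{Main obstacle.} The step I expect to need the most care is this last one: verifying that $G_{\mathbb{F}}^\ell=1$ forces all of $\mu_{\ell^\infty}$ into a finite extension. We use that $\mathbb{F}_p(\mu_{\ell^n})=\mathbb{F}_{p^{m_n}}$, where $m_n$ is the order of $p$ modulo $\ell^n$. After adjoining $\mu_\ell$ (or $\mu_4$), $m_n$ grows only by powers of $\ell$, by the structure of $(\mathbb{Z}/\ell^n)^\times$. Since $\mathbb{F}$ contains fields of all $\ell$-power degrees over its finite subfields, it contains all of these.
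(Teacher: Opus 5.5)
Your proof is correct. For (i)$\Leftrightarrow$(ii)$\Leftrightarrow$(iii) it is the paper's argument: both reduce to Proposition~\ref{vanishing of coinvariants over alg ext over FF}. You also make explicit two points the paper leaves tacit. First, condition (i) is inherited by finite extensions, since open subgroups of $\prod_{\ell\in S}\mathbb{Z}_\ell$ keep every $\ell$-component. Second, Poincar\'e duality translates the invariants statement of the proposition into the coinvariants formulation of $(\dagger)_{d,i,r}$.

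You genuinely diverge from the paper on (i)$\Leftrightarrow$(iv). The paper simply cites \cite[Theorem B]{Murotani1}, whereas you prove both directions directly.
\begin{itemize}
\item For (i)$\Rightarrow$(iv), you apply Lemma~\ref{key lem of Gal rep} to $V_\ell(T)$, which is pure of weight $-2$. Proposition~\ref{Ozeki-Taguchi, Prop. 2,4} then turns this into finiteness of $T(\mathbb{F})[\ell^\infty]$. You conclude by noting that $T(\mathbb{F})$ is a torsion group with no $p$-torsion, so it has no nonzero divisible elements.
\item For (iv)$\Rightarrow$(i), you use an elementary cyclotomic computation. The order of $p$ modulo $\ell^n$ is the order of $p$ modulo $\ell$ times a power of $\ell$. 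Hence $\mu_{\ell^\infty}$ lies in the finite extension $\mathbb{F}(\mu_\ell)$ as soon as $\ell^\infty$ divides $[\mathbb{F}:\mathbb{F}_p]$, and $\mu_{\ell^\infty}$ is divisible.
\end{itemize}

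Your route makes the theorem self-contained. It also shows that the toral condition is governed by the same weight mechanism as the cohomological conditions. In fact, (iii)$\Rightarrow$(iv) follows at once from Proposition~\ref{vanishing of coinvarints=semi-AV-tor-finite} combined with your torsion-group observation. The paper's citation is shorter and rests on Murotani's broader analysis, which also characterizes full Kummer-faithfulness for algebraic extensions of finite fields.
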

\begin{proof}
  $(i)\Leftrightarrow (ii)\Leftrightarrow (iii)$ follows from Proposition \ref{vanishing of coinvariants over alg ext over FF}. 
  Moreover, by \cite[Theorem B]{Murotani1}, we have $(i)\Leftrightarrow (iv)$. 
\end{proof}

\begin{prop}\label{vanishing of coinvarints=semi-AV-tor-finite}
  Let $k$ be a perfect field, and let $p\overset{\mathrm{def}}{=} \Char k$ and $\ell \in \mathfrak{Primes}$. 
  Then the following conditions are equivalent:
  \begin{enumerate}[label=(\roman*)]
    \item $k$ is $\ell ^{\infty}$-semi-AV-tor-finite. 
    \item For any finite extension $k_H/k$, it holds that $H^2(\mathbb{P} _{\overline{k}_H}^1,\mathbb{Q} _{\ell})_H=0$, where $H\overset{\mathrm{def}}{=}G_{k_H}$, and for any abelian variety $A$ over $k$, it holds that $H^1(A_{\overline{k}},\mathbb{Q} _{\ell})_{G_k}=0$. 
  \end{enumerate}
  In particular, if $k$ is quasi-highly Kummer-faithful, then $k$ is $\mathfrak{Primes}\backslash \{p\}^{\infty}$-semi-AV-tor-finite. 
\end{prop}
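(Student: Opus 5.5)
The plan is to translate each coinvariant condition in (ii) into a torsion-finiteness condition using duality, and then to assemble the two halves with Proposition \ref{finite=T-finite+A-finite} and Proposition \ref{Ozeki-Taguchi, Prop. 2,4}. Throughout we assume $\ell\neq p$, since otherwise (ii) does not make sense. For a finite-dimensional continuous representation $V$ of a profinite group $G$ over $\mathbb{Q}_\ell$, we have $(V_G)^\vee\cong (V^\vee)^G$. So each coinvariant condition is equivalent to the vanishing of the invariants of the dual representation.

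\emph{The torus part.} We have $H^2(\mathbb{P}^1_{\overline{k}_H},\mathbb{Q}_\ell)\cong\mathbb{Q}_\ell(-1)$. Its dual is $\mathbb{Q}_\ell(1)=V_\ell(\mathbb{G}_m)$, so the vanishing of $H^2(\mathbb{P}^1_{\overline{k}_H},\mathbb{Q}_\ell)_H$ is equivalent to $V_\ell(\mathbb{G}_m)^{H}=0$. By Proposition \ref{Ozeki-Taguchi, Prop. 2,4} applied with $K=k_H$, this is equivalent to $\mathbb{G}_m(k_H)[\ell^\infty]_{\ell\text{-div}}=0$, or equivalently to the finiteness of $\mu_{\ell^\infty}(k_H)$.

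\emph{The abelian part.} For an abelian variety $A$ over $k$ we have $H^1(A_{\overline{k}},\mathbb{Q}_\ell)\cong\Hom(V_\ell(A),\mathbb{Q}_\ell)$, so its dual is $V_\ell(A)$. Hence $H^1(A_{\overline{k}},\mathbb{Q}_\ell)_{G_k}=0$ if and only if $V_\ell(A)^{G_k}=0$. Again by Proposition \ref{Ozeki-Taguchi, Prop. 2,4}, this is equivalent to $A(k)[\ell^\infty]_{\ell\text{-div}}=0$, i.e.\ to the finiteness of $A(k)[\ell^\infty]$.

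\emph{Assembling (i) $\Leftrightarrow$ (ii).} By the translations above, condition (ii) becomes exactly condition (ii) of Proposition \ref{finite=T-finite+A-finite}. That proposition then gives, for every semi-abelian variety $B$ over $k$, the vanishing $B(k)[\ell^\infty]_{\ell\text{-div}}=0$, which means $B(k)[\ell^\infty]$ is finite by Proposition \ref{Ozeki-Taguchi, Prop. 2,4}. As remarked after Definition \ref{Def of semi-AV-tor-finite}, it suffices to check this over $L=k$, so this is $\ell^\infty$-semi-AV-tor-finiteness. The converse direction (i) $\Rightarrow$ (ii) is immediate: apply the definition to $\mathbb{G}_m$ over each $k_H$ and to each abelian variety $A$ over $k$, and run the same equivalences backwards.

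\emph{The ``in particular''.} Let $k$ be quasi-highly Kummer-faithful and $\ell\ne p$. Apply $(\dagger)_{1,2,0}$ to $\mathbb{P}^1_{k_H}$, and apply $(\dagger)_{g,1,0}$ to each abelian variety $A$ of dimension $g$; this yields (ii), hence (i), for every $\ell\neq p$.

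The main point requiring care is the reduction to $L=k$ in the definition of $\ell^\infty$-semi-AV-tor-finiteness. The torus condition in (ii) is required over all finite extensions, which is what Proposition \ref{finite=T-finite+A-finite} needs. Everything else is formal.
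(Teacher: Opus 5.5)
Your proposal is correct and follows the same route as the paper. You dualize $H^2(\mathbb{P}^1_{\overline{k}_H},\mathbb{Q}_\ell)$ and $H^1(A_{\overline{k}},\mathbb{Q}_\ell)$ to $\mathbb{Q}_\ell(1)=V_\ell(\mathbb{G}_m)$ and $V_\ell(A)$, turn coinvariants into invariants, and conclude with Propositions \ref{Ozeki-Taguchi, Prop. 2,4} and \ref{finite=T-finite+A-finite}, additionally spelling out the reduction to $L=k$ and the use of $(\dagger)_{1,2,0}$ and $(\dagger)_{g,1,0}$ for the final assertion, which the paper leaves implicit.
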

\begin{proof}
  We have $G_k$-isomorphisms
  $\mathbb{Q} _{\ell}(1)\cong H^2(\mathbb{P} _{\overline{k}} ^1,\mathbb{Q} _{\ell})^{\vee}$ and 
  $V_{\ell}(A)\cong H^1(A_{\overline{k}},\mathbb{Q} _{\ell})^{\vee}$
  for any abelian variety over $k$, where $H^2(\mathbb{P} _{\overline{k}} ^1,\mathbb{Q} _{\ell})^{\vee}$ (resp.\ $H^1(A_{\overline{k}},\mathbb{Q} _{\ell})^{\vee}$) is the dual space of $H^2(\mathbb{P} _{\overline{k}} ^1,\mathbb{Q} _{\ell})$ (resp.\ $H^1(A_{\overline{k}},\mathbb{Q} _{\ell})$).  
  Thus, the assertion follows from Propositions \ref{Ozeki-Taguchi, Prop. 2,4} and \ref{finite=T-finite+A-finite}. 
\end{proof}
\vspace{\baselineskip}
In the remainder of this section, let $k$ be a CDVF. 
Then we shall write 
\begin{itemize}
  \item $\mathcal{O} _k$ for the ring of integers of $k$, 
  \item $\underline{k}$ for the residue field of $\mathcal{O} _k$,
  \item $\overline{\underline{k}}$ for the algebraic closure of $\underline{k}$.
\end{itemize}
For a proper smooth variety $X$ over $k$, we shall write
$$\mathfrak{X} \to \Spec \mathcal{O} _k$$ 
for a proper model of $X$.
Furthermore, if proper smooth variety $X$ over $k$ has semi-stable reduction, then, for each integer $j\geq  0$, we shall write 
$$\mathfrak{X} _{\underline{k}}^{(j)}$$ 
for the disjoint union of all $(j+1)$-fold intersections of the distinct irreducible components of $\mathfrak{X} _{\underline{k}} $.
In the remainder of this section, let $p\in \mathfrak{Primes}$, and we assume that $\underline{k}/\mathbb{F} _p$ is an algebraic extension.\\

\begin{prop}\label{l-adic vanishing over CDVF}
  Let $\ell \in \mathfrak{Primes}\backslash \{p\}$. 
  Then the following conditions are equivalent:
  \begin{enumerate}[label=(\roman*)]
    \item $G_{\underline{k}}^{\ell}$ is nontrivial.
    \item For any $d$-dimensional proper smooth variety $X$ over $k$, every integer $i\neq 2d$, and every integer $r\notin \left[\max(0, i - d), \, \max(0, \min(i - 1, d - 1))\right]$, it holds that 
          $H^i(X_{\overline{k}},\mathbb{Q} _{\ell}(r))^{G_k}=0$. 
    \item For any proper smooth variety $X$ over $k$ and every integer $i\neq 0$, it holds that $H^i(X_{\overline{k}},\mathbb{Q} _{\ell})_{G_k}=0$. 
    \item For any abelian variety $A$ over $k$, it holds that $V_{\ell}(A)^{G_k}=0$. 
    \item For any semi-abelian variety $B$ over $k$, it holds that $V_{\ell}(B)^{G_k}=0$. 
    \item $k$ is stably $\mu _{\ell ^{\infty}}$-finite. 
  \end{enumerate}
\end{prop}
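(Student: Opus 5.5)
I would prove the cycle $(i)\Rightarrow(ii)\Rightarrow(iii)\Rightarrow(iv)\Rightarrow(v)\Rightarrow(vi)\Rightarrow(i)$. Several of these implications are formal.

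For $(ii)\Rightarrow(iii)$, Poincaré duality gives $H^i(X_{\overline{k}},\mathbb{Q}_\ell)_{G_k}^{\vee}\cong H^{2d-i}(X_{\overline{k}},\mathbb{Q}_\ell(d))^{G_k}$. For $i\neq 0$ we have $2d-i\neq 2d$, and $r=d$ lies outside $[\max(0,d-i),\max(0,\min(2d-i-1,d-1))]$ because the upper end is at most $d-1$. So $(ii)$ applies.

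For $(iii)\Rightarrow(iv)$, use $V_\ell(A)\cong H^1(A_{\overline{k}},\mathbb{Q}_\ell)^{\vee}$ as in Proposition \ref{vanishing of coinvarints=semi-AV-tor-finite}.

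For $(v)\Rightarrow(vi)$, a finite extension $L/k$ is again a CDVF whose residue field is algebraic over $\mathbb{F}_p$. Apply $(v)$ over $L$ to $B=\mathbb{G}_m$, or to the Weil restriction $\mathrm{Res}_{L/k}\mathbb{G}_m$ over $k$, which is a torus. This gives $V_\ell(\mathbb{G}_m)^{G_L}=0$, i.e.\ $\mu_{\ell^\infty}(L)$ is finite by Proposition \ref{Ozeki-Taguchi, Prop. 2,4}.

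For $(vi)\Rightarrow(i)$, suppose $G_{\underline{k}}^\ell$ is trivial. Then for each $n$ the degree-$[\underline{k}(\mu_{\ell^n}):\underline{k}]$ extension is trivial, so $\underline{k}$ contains all $\mu_{\ell^\infty}$: the Galois group of $\mathbb{F}_p(\mu_{\ell^\infty})/\mathbb{F}_p$ is an open subgroup of $\mathbb{Z}_\ell^\times$, hence has a nontrivial $\ell$-part, and without loss we first replace $k$ by a finite extension so that $\mu_\ell\subset\underline{k}$. By Hensel's lemma all $\ell$-power roots of unity lift to $k$, contradicting $(vi)$. More precisely, $\mu_\ell(\underline{k})$ is finite in any case, so after one finite extension $L$, triviality of $G_{\underline{k}_L}^\ell$ forces $\mu_{\ell^\infty}\subset \underline{k}_L$.

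For $(iv)\Rightarrow(v)$, I plan to use Proposition \ref{finite=T-finite+A-finite}. This requires torus finiteness over finite extensions, which must itself be derived from $(iv)$. First show $(iv)\Rightarrow(i)$: if $G_{\underline{k}}^\ell$ is trivial, take a Tate elliptic curve $E_q$ over $k$, which has $V_\ell(E_q)\supset\mathbb{Q}_\ell(1)$. Since $\underline{k}$ contains $\mu_{\ell^\infty}$ after the finite extension above, $\mathbb{Q}_\ell(1)$ has trivial $G_k$-action up to an unramified character of $\ell$-power order that becomes trivial, contradicting $(iv)$. To stay over $k$ itself, use the Weil restriction of $E_q$ from $L$ to $k$. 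Then $(i)$ for $k$ implies $(i)$ for every finite extension, since a finite extension of $\underline{k}$ still has nontrivial maximal pro-$\ell$ quotient by the $\mathbb{Z}_\ell$-factor argument in Lemma \ref{key lem of Gal rep}. Hence $(vi)$ holds by the reverse of the previous paragraph: $\mu_{\ell^\infty}(L)$ injects into $\mu_{\ell^\infty}(\underline{L})$, which is finite when $G_{\underline{L}}^\ell\neq 1$. With torus and abelian parts controlled, Proposition \ref{finite=T-finite+A-finite} gives $(v)$.

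The main step, and the expected obstacle, is $(i)\Rightarrow(ii)$. By Lemma \ref{key lem of vanishing} it suffices to show $H^n(Y_{\overline{k}},\mathbb{Q}_\ell(r))^{G_k}=0$ for projective smooth $n$-dimensional $Y$ and $r\notin[0,n-1]$. We may pass to a finite extension, which preserves $(i)$, and so assume strictly semistable reduction by de Jong's alteration and unipotent inertia. Then $H^n(Y_{\overline{k}},\mathbb{Q}_\ell)^{I_k}\subseteq\ker N$, and by Proposition \ref{isom of N^d}, valid for any CDVF, together with $E_2$-degeneration \cite{Nakayama}, this is contained in a successive extension of subquotients of $E_2^{a,b}$ with $0\le b\le 2n-1$. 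The weight of each $E_1^{a,b}$ is computed via the cohomology of the $\mathfrak{X}^{(j)}$, which descend to a finite field $\mathbb{F}_q\subset\underline{k}$ and are pure of weight $b$ there.

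The difficulty is that over $\underline{k}$ "pure of weight" is not meaningful directly. I would argue with $G_{\mathbb{F}_{q^m}}$-modules, where the whole spectral sequence and $N$ descend after enlarging $q$. The invariants of $\mathbb{Q}_\ell(r)$ against weight $b$ pieces vanish over every $\mathbb{F}_{q^m}$ when $2r\neq b$, i.e.\ for $r\notin[0,n-1]$. Lemma \ref{key lem of Gal rep} then transfers the vanishing to $G_{\underline{k}}$, using $(i)$, and a filtration/left-exactness argument passes it from the graded pieces to $H^n(\cdot)^{G_k}=\bigl(H^n(\cdot)^{I_k}\bigr)^{G_{\underline{k}}}$.
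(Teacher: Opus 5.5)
Your proposal is correct, and its substantive step $(i)\Rightarrow(ii)$ follows the paper's argument. You reduce to the middle degree via Lemma~\ref{key lem of vanishing}, pass to strictly semistable reduction, and use Proposition~\ref{isom of N^d} with Nakayama's $E_2$-degeneration to place $H^n(Y_{\overline{k}},\mathbb{Q}_\ell)^{I_k}$ inside the filtration step whose graded pieces are the $E_2^{a,b}$ with $b\le 2n-1$. You then descend these terms to a finite field and apply Lemma~\ref{key lem of Gal rep}. Your remaining implications use the paper's ingredients, only arranged differently: you get $(iv)\Rightarrow(v)$ via a Tate-curve/Weil-restriction proof of $(iv)\Rightarrow(i)$, followed by $(i)\Rightarrow(vi)$ and Proposition~\ref{finite=T-finite+A-finite}, and you prove $(vi)\Rightarrow(i)$ directly by Hensel's lemma rather than citing Ozeki.
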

\begin{proof}
  $(ii)\Rightarrow (iii)\Rightarrow (v)$, $(v)\Rightarrow (iv)$, and $(v)\Rightarrow (vi)$ are clear. 

  $(vi)\Rightarrow (i)$:
  Let $k_0$ be the maximal unramified extension of $\mathbb{F}_p(\!(t)\!)$ or $\mathbb{Q}_p$ contained in $k$.
  Then $k_0$ is also stably $\mu _{\ell^\infty}$-finite.
  Thus, the assertion follows from the same argument as in \cite[Lemma 3.2]{Ozeki}.

  $(iv)\Rightarrow (i)$: 
  Let $E$ be an elliptic curve over $k$. 
  Consider the action of $G_{\underline{k}}$ on $V_{\ell}(E)^{I_k}$. 
  By the argument in the proof of Lemma \ref{key lem of Gal rep}, after replacing $\underline{k}$ by a finite extension, we may assume that the action factors through $G_{\underline{k}}^{\ell}$. 
  On the other hand, since $V_{\ell}(E)^{G_k}=0$, it follows that $G_{\underline{k}}^{\ell}$ is nontrivial.

  $(i)\Rightarrow (ii)$: 
  Let $X$ be a $d$-dimensional proper smooth variety over $k$. 
  We may assume that $X$ has projective strictly semi-stable reduction.  
  Then we consider the following $\ell$-adic weight spectral sequence:
  \[
  E_1^{a,b}
  = \bigoplus_{r \ge \max(0, -a)}
  H^{b-2r} ( \mathfrak{X} _{\overline{\underline{k}}}^{(a+2r)}, \mathbb{Q}_\ell(-r) )
  \Longrightarrow
  H^{a+b}( X_{\overline{k}}, \mathbb{Q}_\ell ) 
 \]
 Note that this spectral sequence degenerates at the $E_2$-page.
 By Proposition \ref{isom of N^d}, we have 
 $
    H^d(X_{\overline{k}},\mathbb{Q} _\ell)^{I_k} 
    \subseteq  E_2^{-d+1,2d-1}\oplus E_2^{-d+2,2d-2}\oplus \cdots \oplus E_2^{d,0}                                                 
 $ . 
  By descent, $E_2^{a,b}$ is equipped with an action of the absolute Galois group of some FF, and is pure of weight $b$.
  Hence, by Lemma \ref{key lem of Gal rep}, for $b\neq 2r$, we have $E_2^{a,b}(r)^{G_{\underline{k}} }=0$. 
  By the above inclusion, for $r\notin [0,\, d-1]$, we have $H^d(X_{\overline{k}},\mathbb{Q} _{\ell}(r))^{G_k}=0$. 
  Thus, by Lemma \ref{key lem of vanishing}, for $i\neq 2d$ and $r\notin \left[\max(0, i - d), \, \max(0, \min(i - 1, d - 1))\right]$, we have
          $H^i(X_{\overline{k}},\mathbb{Q} _{\ell}(r))^{G_k}=0$
\end{proof}

\begin{thm}
  Let $k$ be a PCDVF. 
  Then the following conditions are equivalent:
  \begin{enumerate}[label=(\roman*)]
    \item For every $\ell \in \mathfrak{Primes}\backslash \{p\}$, $G_{\underline{k}}^{\ell}$ is nontrivial. 
    \item $k^{\textnormal{perf}}$ is quasi-highly Kummer-faithful. 
  \end{enumerate}
\end{thm}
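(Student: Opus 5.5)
The plan is to deduce both implications from Proposition \ref{l-adic vanishing over CDVF}, applied to $k$ and to its finite extensions. For a PCDVF the primes $\ell$ relevant to quasi-high Kummer-faithfulness are exactly those in $\mathfrak{Primes}\backslash\{p\}$. Since $G_k=G_{k^{\text{perf}}}$, condition $(\dagger)_{d,i,0}$ for $k^{\text{perf}}$ is the same as condition $(\dagger)_{d,i,0}$ for $k$. The finite extensions of $k^{\text{perf}}$ are the perfect closures of finite extensions $k_H/k$. Each such $k_H$ is again a CDVF, and its residue field $\underline{k_H}$ is a finite extension of $\underline{k}$, hence algebraic over $\mathbb{F}_p$. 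So Proposition \ref{l-adic vanishing over CDVF} applies to every $k_H$.

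For $(i)\Rightarrow(ii)$, fix $\ell\neq p$ and a finite extension $k_H/k$. Since $G_{\underline{k_H}}$ is an open subgroup of $G_{\underline{k}}$, we need its maximal pro-$\ell$ quotient to be nontrivial.

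\begin{itemize}
\item Both groups are closed subgroups of $G_{\mathbb{F}_p}\cong\widehat{\mathbb{Z}}$, so they are procyclic, of the form $\prod_q\mathbb{Z}_q^{e_q}$ with $e_q\in\{0,1\}$.
\item The $\ell$-part of such a group is nonzero if and only if $e_\ell=1$.
\item Passing to an open subgroup only multiplies by a finite index, so it does not change which $e_q$ equal $1$.
\end{itemize}

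Hence $G_{\underline{k_H}}^{\ell}\neq 1$. The implication $(i)\Rightarrow(iii)$ of Proposition \ref{l-adic vanishing over CDVF} for $k_H$ then gives $H^i(X_{\overline{k_H}},\mathbb{Q}_\ell)_{G_{k_H}}=0$ for every proper smooth $X$ over $k_H$ and every $i\neq 0$. This is exactly $(\dagger)_{d,i,0}$ for all $0<i\leq 2d$, so $k^{\text{perf}}$ is quasi-highly Kummer-faithful.

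For $(ii)\Rightarrow(i)$, quasi-high Kummer-faithfulness of $k^{\text{perf}}$ gives, for each $\ell\neq p$, the vanishing $H^i(X_{\overline{k}},\mathbb{Q}_\ell)_{G_k}=0$ for all $i\neq 0$ and all proper smooth $X$ over $k$. This is condition $(iii)$ of Proposition \ref{l-adic vanishing over CDVF} for $k$, and the equivalence $(iii)\Leftrightarrow(i)$ there yields $G_{\underline{k}}^\ell\neq 1$. One could also route this through Proposition \ref{vanishing of coinvarints=semi-AV-tor-finite} to get $(v)$ and then $(i)$, but the direct route suffices.

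The main point needing care is bookkeeping rather than mathematics. One has to check that Proposition \ref{l-adic vanishing over CDVF} applies to every finite extension, i.e.\ that the residue hypothesis persists and the pro-$\ell$ nontriviality descends to open subgroups. One also has to handle the fact that $k$ is imperfect, so that $(\dagger)$ is being checked for $k^{\text{perf}}$ through $k$ with the same Galois groups. Finally, I must make sure the equivalence in Proposition \ref{l-adic vanishing over CDVF} is genuinely proved for PCDVFs. This holds because its proof uses only the $\ell$-adic weight spectral sequence, Nakayama's degeneration, and Lemma \ref{key lem of vanishing}, none of which requires mixed characteristic.
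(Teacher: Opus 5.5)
Your proposal is correct and follows the paper's proof. Like the paper, you identify condition $(\dagger)$ for $k^{\text{perf}}$ with condition $(\dagger)$ for $k$ via $G_k=G_{k^{\text{perf}}}$, and then apply Proposition \ref{l-adic vanishing over CDVF} to $k$ and its finite extensions. Your procyclic argument, showing that nontriviality of $G^{\ell}$ persists to the residue fields of finite extensions, makes explicit a step the paper leaves implicit.
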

\begin{proof}
  The field $k$ satisfies the condition $(\dagger)_{d,i,r}$ if and only if $k^{\text{perf}}$ satisfies the condition $(\dagger)_{d,i,r}$. 
  Thus, the assertion follows from Proposition \ref{l-adic vanishing over CDVF}. 
\end{proof}
\vspace{\baselineskip}
In the remainder of this section, for an algebraic extension $\mathbb{F} /\mathbb{F} _p$, we shall write $$K_0(\mathbb{F} )\overset{\mathrm{def}}{=}W(\mathbb{F} )[1/p]$$ for the fraction field of the Witt ring  of $\mathbb{F} $.\\

\begin{lem}\label{key lem of p-adic case}
  Let $D$ be a $\varphi$-module over $K_0(\mathbb{F} _q)$ that is pure of nonzero weight. 
  Let $\mathbb{F} /\mathbb{F} _q$ be an algebraic extension, and assume that $G_{\mathbb{F} }^p$ is nontrivial where $p$ is the characteristic of $\mathbb{F} _q$. 
  Then we have
  \[ \left(D\otimes _{K_0(\mathbb{F} _q)}K_0(\mathbb{F} )\right)^{\varphi =1}=0.\] 
\end{lem}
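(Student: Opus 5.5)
The plan is to turn the $\varphi$-invariants into a finite-dimensional $\mathbb{Q}_p$-representation of $\Gamma\overset{\mathrm{def}}{=}\Gal(\mathbb{F}/\mathbb{F}_q)$. On this representation the Frobenius of $\mathbb{F}_q$ acts through the linearized Frobenius of $D$. We then compare two facts about it: the purity of nonzero weight, and the finiteness of the pro-$p$ part of $\Gamma$.

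Write $q=p^f$ and $L\overset{\mathrm{def}}{=}K_0(\mathbb{F})$, and set
$$V\overset{\mathrm{def}}{=}\left(D\otimes_{K_0(\mathbb{F}_q)}L\right)^{\varphi=1}.$$

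\textbf{Step 1: $V$ is a finite-dimensional $\Gamma$-representation.} $V$ is a $\mathbb{Q}_p$-vector space, and the natural map $V\otimes_{\mathbb{Q}_p}L\to D\otimes_{K_0(\mathbb{F}_q)}L$ is injective. This is the standard Artin-type lemma for $\varphi$-fixed vectors, since $L^{\varphi=1}=\mathbb{Q}_p$. Hence $\dim_{\mathbb{Q}_p}V\le\dim D<\infty$. The group $\Gamma$ acts on $L$ through $W(\mathbb{F})$, this action commutes with $\varphi$, and so $\Gamma$ acts on $V$ via $1\otimes\gamma$.

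\textbf{Step 2: Frobenius acts on $V$ through $\Phi^{-1}$.} Let $\Phi\overset{\mathrm{def}}{=}\varphi^f$, which is $K_0(\mathbb{F}_q)$-linear on $D$. On $D\otimes L$ we have $\varphi^f=\Phi\otimes\sigma_q$, where $\sigma_q\in\Gamma$ is the $q$-power Frobenius. For $x\in V$ this gives $(1\otimes\sigma_q)x=(\Phi^{-1}\otimes 1)x$. Since $V\otimes L$ embeds $\Phi$-equivariantly into $D\otimes L$, every eigenvalue of $\sigma_q$ on $V$ is an eigenvalue of $\Phi^{-1}$. By purity, these are Weil numbers of weight $-w\neq 0$; in particular none of them is a root of unity.

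\textbf{Step 3: the image of $\Gamma$ is finite.} The hypothesis that $G_{\mathbb{F}}^p$ is nontrivial means that $\mathbb{F}$ does not contain the full $\mathbb{Z}_p$-extension of $\mathbb{F}_q$, since otherwise the maximal pro-$p$ quotient $\mathbb{Z}_p$ of $G_{\mathbb{F}_q}\cong\widehat{\mathbb{Z}}$ would be killed. Hence $\Gamma\cong\prod_{\ell}\Gamma_\ell$ with $\Gamma_p$ finite. Next, $V$ has a $\Gamma$-stable $\mathbb{Z}_p$-lattice: for instance, its intersection with $D_0\otimes W(\mathbb{F})$ for a lattice $D_0\subseteq D$, after scaling. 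Provided the action is continuous, $\Gamma$ maps into $\GL_n(\mathbb{Z}_p)$, which is virtually pro-$p$. A continuous image of $\prod_{\ell\neq p}\Gamma_\ell$ meets an open pro-$p$ subgroup trivially, so it is finite. Thus the image of $\Gamma$ is finite, and $\sigma_q$ acts on $V$ with eigenvalues that are roots of unity. Together with Step 2, this forces $V=0$.

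The main obstacle is the continuity of the $\Gamma$-action on $V$ in Step 3. I would handle it by choosing a basis of $V$ inside $D_0\otimes W(\mathbb{F})$. Since $W(\mathbb{F})=\varprojlim W_m(\mathbb{F})$ and each $W_m(\mathbb{F})$ is a union of the $W_m(\mathbb{F}_{q^n})$, every vector has open stabilizer modulo $p^m$. This gives continuity for the $p$-adic topology on the lattice.
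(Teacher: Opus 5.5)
Your proof is correct, but it is organized differently from the paper's. The paper passes to the larger space $(D\otimes_{K_0(\mathbb{F}_q)}K_0(\overline{\mathbb{F}}_q))^{\varphi=1}$ and treats it as a $G_{\mathbb{F}_q}$-representation. It notes that the $G_{\mathbb{F}_{q^m}}$-invariants are $(D\otimes_{K_0(\mathbb{F}_q)}K_0(\mathbb{F}_{q^m}))^{\varphi=1}=0$ for every $m$, and then quotes Lemma~\ref{key lem of Gal rep} with $\ell=p$. The proof of that lemma shows that, after a finite enlargement of $\mathbb{F}$, the groups $G_{\mathbb{F}}$ and some $G_{\mathbb{F}_{q^m}}$ have the same image in $\GL(V)$.

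You instead stay over $\mathbb{F}$. You show directly that $\Gal(\mathbb{F}/\mathbb{F}_q)$ acts on $V$ through a finite quotient, using that its $p$-part is finite and that $\GL_n(\mathbb{Z}_p)$ is virtually pro-$p$ (the same group theory behind Lemma~\ref{key lem of Gal rep}). You then combine this with $\sigma_q|_V=\Phi^{-1}|_V$ to exclude every eigenvalue.

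The paper's route is shorter because it reuses the $\ell$-adic lemma, so the $\ell$-adic and $p$-adic cases look uniform. But it leaves implicit exactly what you make explicit:
\begin{enumerate}
\item that the $\varphi$-invariants form a finite-dimensional continuous $\mathbb{Q}_p$-representation, which is needed to land in $\GL_n(\mathbb{Z}_p)$;
\item why purity of nonzero weight forces $(D\otimes_{K_0(\mathbb{F}_q)}K_0(\mathbb{F}_{q^m}))^{\varphi=1}=0$, which is your Step~2 at a finite level.
\end{enumerate}
Your finite-image conclusion already gives $V\subseteq (D\otimes_{K_0(\mathbb{F}_q)}K_0(\mathbb{F}'))^{\varphi=1}$ for some finite subextension $\mathbb{F}'/\mathbb{F}_q$ of $\mathbb{F}$. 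That is where the two arguments meet.

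One detail to tighten in your continuity argument. Put $\Lambda=D_0\otimes_{W(\mathbb{F}_q)}W(\mathbb{F})$ and $\Lambda_V=V\cap\Lambda$. Your open-stabilizer argument gives $\gamma v\equiv v$ modulo $p^m\Lambda$, but continuity on the lattice needs the congruence modulo $p^m\Lambda_V$. This follows from $V\cap p^m\Lambda=p^m\Lambda_V$, which holds because $V$ is a $\mathbb{Q}_p$-vector space. Finite generation of $\Lambda_V$ also deserves a line. It holds because $W(\mathbb{F})$ is a discrete valuation ring and $V\otimes_{\mathbb{Q}_p}K_0(\mathbb{F})\to D\otimes_{K_0(\mathbb{F}_q)}K_0(\mathbb{F})$ is injective, so $\Lambda_V$ is bounded with respect to a $\mathbb{Q}_p$-basis of $V$.
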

\begin{proof}
  Set $V\overset{\mathrm{def}}{=}\left(D\otimes _{K_0(\mathbb{F} _q)}K_0(\overline{\mathbb{F}} _q)\right)^{\varphi =1}$. 
  Then $V$ is a $G_{\mathbb{F} _q}$-representation over $\mathbb{Q} _p$. 
  For any finite extension $\mathbb{F} _{q^m}/\mathbb{F} _q$, we have 
  \[
  V^{G_{\mathbb{F} _{q^m}}}=\left(\left(D\otimes _{K_0(\mathbb{F} _q)}K_0(\overline{\mathbb{F}} _q)\right)^{G_{\mathbb{F} _{q^m}}}\right)^{\varphi =1}=\left(D\otimes _{K_0(\mathbb{F} _q)}K_0(\mathbb{F} _{q^m})\right)^{\varphi =1}=0.
  \] 
  Thus, by Lemma \ref{key lem of Gal rep}, we have 
  $$\left(D\otimes _{K_0(\mathbb{F} _q)}K_0({\mathbb{F} })\right)^{\varphi =1}=V^{G_\mathbb{F} }=0.$$ 
\end{proof}

\begin{prop}\label{p-adic vanishing over MCDVF}
  Let $k$ be an MCDVF. 
  Then the following conditions are equivalent:
  \begin{enumerate}[label=(\roman*)]
    \item $G_{\underline{k}}^p$ is nontrivial. 
    \item For any $d$-dimensional proper smooth variety $X$ over $k$, every integer $i\neq 2d$, and every integer $r\notin \left[\max(0, i - d), \, \max(0, \min(i - 1, d - 1))\right]$, it holds that 
          $H^i(X_{\overline{k}},\mathbb{Q} _{p}(r))^{G_k}=0$. 
    \item For any proper smooth variety $X$ over $k$ and an integer $i\neq 0$, it holds that $H^i(X_{\overline{k}},\mathbb{Q} _{p})_{G_k}=0$. 
    \item $k$ is $p^{\infty}$-semi-AV-tor-finite. 
  \end{enumerate}
\end{prop}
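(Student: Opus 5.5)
The cycle of implications will be $(ii)\Rightarrow(iii)\Rightarrow(iv)\Rightarrow(i)\Rightarrow(ii)$, following the pattern of Proposition \ref{l-adic vanishing over CDVF}, with the $\ell$-adic weight argument replaced by the $p$-adic one from the proof of Theorem \ref{p-adic vanishing over MLF}.

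\emph{$(ii)\Rightarrow(iii)$.} By Poincar\'e duality, $H^i(X_{\overline{k}},\mathbb{Q}_p)_{G_k}$ is dual to $H^{2d-i}(X_{\overline{k}},\mathbb{Q}_p(d))^{G_k}$. For $i\neq 0$ we have $2d-i\neq 2d$, and $r=d$ lies outside the interval $[\max(0,d-i),\max(0,\min(2d-i-1,d-1))]$ because its upper end is at most $d-1$. So $(ii)$ gives the vanishing.

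\emph{$(iii)\Rightarrow(iv)$.} Take $X=\mathbb{P}^1$ and $X=A$ an abelian variety, and note that any finite extension of $k$ is again an MCDVF with residue field algebraic over $\mathbb{F}_p$. By Proposition \ref{vanishing of coinvarints=semi-AV-tor-finite}, this yields $p^\infty$-semi-AV-tor-finiteness. Strictly speaking, $(iii)$ must then be read for every finite extension $k_H$; this is harmless because $(iii)$ can be applied over $k_H$ directly.

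\emph{$(iv)\Rightarrow(i)$.} Assume $G_{\underline{k}}^p$ is trivial, so that $\underline{k}$ contains the $\mathbb{Z}_p$-extension of its prime field's finite subfield. I would take a Tate curve, or better an elliptic curve $E$ over $k$ with good ordinary reduction (defined over a finite subfield, then lifted). Then $V_p(E)$ has the unramified quotient $V_p(\tilde{E}[p^\infty])$. Dually, $E$ has the étale part of its $p$-divisible group, $E(k)[p^\infty]\supseteq \widehat{E}$-torsion, and the reduction of $E(k)[p^\infty]$ surjects onto $\tilde{E}(\underline{k})[p^\infty]$ by Hensel. This last group is infinite: after a finite extension the Frobenius action on the étale Tate module factors through a pro-$p$ quotient $\mathbb{Z}_p$ as in Lemma \ref{key lem of Gal rep}, and $G_{\underline{k}}$ acts trivially. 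This contradicts Proposition \ref{Ozeki-Taguchi, Prop. 2,4}. An alternative is to use $\mathbb{G}_m$ together with $\mu_{p^\infty}$, but that fails in general because $\mu_{p^\infty}$ is ramified, so the elliptic curve route is the one to use.

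\emph{$(i)\Rightarrow(ii)$.} By de Jong's alteration, reduce to $X$ with projective strictly semi-stable reduction, after a finite extension of $k$; this preserves $(i)$, since a finite extension of $\underline{k}$ still has nontrivial maximal pro-$p$ quotient. By Lemma \ref{key lem of vanishing}, it suffices to treat $i=d$ and $r\notin[0,d-1]$. Use the $C_{\rm st}$-comparison over $k$ exactly as in the proof of Theorem \ref{p-adic vanishing over MLF}: $H^d(X_{\overline{k}},\mathbb{Q}_p(r))^{G_k}$ embeds into $\Ker(N)^{\varphi=p^{r}}$ on $H^d_{\rm log\text{-}crys}(\mathfrak{X}_{\underline{k}}/K_0(\underline{k}))$, after the appropriate twist. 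The weight spectral sequence degenerates at $E_2$ by Nakkajima, and Proposition \ref{p-adic isom of N^d}, which holds over any CDVF, places $\Ker N$ inside a module with a filtration whose graded pieces are subquotients of $E_2^{a,b}$ with $b\in[0,2d-1]$. Descend $\mathfrak{X}_{\underline{k}}$ and its strata to a finite field $\mathbb{F}_q\subseteq\underline{k}$; this is the main obstacle, since one needs the SNCL log structure and the spectral sequence to descend compatibly, with $E_2^{a,b}$ being base changes of $\varphi$-modules over $K_0(\mathbb{F}_q)$ pure of weight $b$. Then each $E_2^{a,b}(r)$ is pure of weight $b-2r\neq0$, and Lemma \ref{key lem of p-adic case} kills its $\varphi=1$ part over $K_0(\underline{k})$. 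A filtration-by-filtration (left exactness) argument then gives $(\Ker N)(r)^{\varphi=1}=0$, which completes the proof.
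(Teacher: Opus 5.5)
Your implications $(ii)\Rightarrow(iii)$ and $(i)\Rightarrow(ii)$ follow the paper's argument: alteration, Lemma \ref{key lem of vanishing}, the $C_{\mathrm{st}}$ embedding into $\Ker(N)^{\varphi=1}$, Proposition \ref{p-adic isom of N^d}, descent to some $\mathbb{F}_q$, and Lemma \ref{key lem of p-adic case}. The paper obtains the descent from \cite[Lemma 2.2]{Nakayama} together with crystalline base change on the strata. Your filtration/left-exactness phrasing is, if anything, cleaner than the paper's direct-sum inclusion.

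\textbf{The gap in $(iv)\Rightarrow(i)$.} Here you replace the paper's reduction to the maximal unramified subfield $k_0$ and \cite[Lemma 3.2]{Ozeki} with a direct construction. Your claim that reduction $E(k)[p^\infty]\to\tilde E(\underline k)[p^\infty]$ is surjective ``by Hensel'' is false for an arbitrary ordinary lift. Hensel only gives $E(k)\twoheadrightarrow\tilde E(\underline k)$. If $P$ lifts a point of order $p^n$, then $p^nP\in\widehat E(\mathfrak m_k)$, and correcting $P$ to a torsion point requires $p^nP\in p^n\widehat E(\mathfrak m_k)$. That need not hold, since $\widehat E(\mathfrak m_k)$ is far from $p$-divisible when $k$ has characteristic $0$ and finite ramification.

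It genuinely fails. Once the unramified character is killed, $V_p(E)$ is an extension of $\mathbb Q_p$ by $\mathbb Q_p(1)$. Its class in $H^1(G_k,\mathbb Q_p(1))$ is the Kummer class of the Serre--Tate parameter $q\in 1+\mathfrak m_k$, and this class is nonzero unless $q$ is a root of unity. So for, say, the lift with $q=1+p$, one has $V_p(E)^{G_L}=0$ for every finite $L/k$, whatever $\underline k$ is, and your argument gives no contradiction with (iv). The Tate curve also fails: $E_q(k)[p^\infty]$ is finite, because $q^m$ with $p\nmid m$ has no $p^n$-th root in $k$ once $p^n\nmid v(q)$.

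\textbf{The fix.} What is missing is the choice of lift. Take $E$ to be the Serre--Tate canonical lift of an ordinary $\tilde E/\mathbb F_q$ over $W(\mathbb F_q)\subseteq\mathcal O_k$. Then $E[p^\infty]\cong\widehat E[p^\infty]\times E[p^\infty]^{\mathrm{et}}$ already over $W(\mathbb F_q)$. Hence $E[p^\infty]^{\mathrm{et}}(\overline k)\cong(\mathbb Q_p/\mathbb Z_p)(\chi)$, with $\chi$ unramified, is a $G_k$-stable direct summand of $E(\overline k)[p^\infty]$. Your observation that $\chi$ restricted to $G_{\underline k}$ has finite image when $G_{\underline k}^p=1$ then gives, after a finite extension, $E(k)[p^\infty]\supseteq\mathbb Q_p/\mathbb Z_p$, contradicting (iv). With this repair your route is a self-contained substitute for the citation of Ozeki.

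\textbf{A smaller slip.} In $(iii)\Rightarrow(iv)$ you cannot ``apply (iii) over $k_H$'', since (iii) is a hypothesis on $k$ alone. It is also unnecessary. Proposition \ref{vanishing of coinvarints=semi-AV-tor-finite} needs abelian varieties over $k$ only, and the $\mathbb P^1$-condition over each finite $k_H$ holds automatically. Indeed, $k_H$ is discretely valued of characteristic $0$, so $\mu_{p^\infty}\not\subseteq k_H$ and hence $\mathbb Q_p(-1)_{G_{k_H}}=0$.
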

\begin{proof}
  $(ii)\Rightarrow (iii)$ is clear. 
  $(iii)\Rightarrow (iv)$ follows from Proposition \ref{vanishing of coinvarints=semi-AV-tor-finite}. 
  
  $(iv)\Rightarrow (i)$: Let $k_0$ be the maximal unramified extension of $\mathbb{Q} _p$ contained in $k$. 
  Then $k_0$ is also $p^{\infty}$-semi-AV-tor-finite. 
  Thus, the assertion follows from \cite[Lemma 3.2]{Ozeki}.

  $(i)\Rightarrow (ii)$: Let $X$ be a $d$-dimensional proper smooth variety over $k$. 
  We may assume that $X$ has projective strictly semi-stable reduction. 
  Note that $\mathfrak{X}_{\underline{k}}$ has the structure of a projective SNCL-variety.
  Let $\mathfrak{X}_0$ be a descent of $\mathfrak{X}_{\underline{k}}$ to some FF $\mathbb{F}_q$ (cf.\ \cite[Lemma 2.2]{Nakayama}).
  Consider the following two $p$-adic weight spectral sequences:
  $$E_1^{a,b}(\mathfrak{X} _{\underline{k}})= \bigoplus_{r \ge \max(0, -a)} H_{\textnormal{crys}}^{b-2r} ( \mathfrak{X} _{\underline{k}} ^{(a+2r)}/K_0(\underline{k}))(-r)\Longrightarrow H_{\textnormal{log-crys}}^{a+b}( \mathfrak{X} _{\underline{k}} /K_0(\underline{k})),$$
  $$E_1^{a,b}(\mathfrak{X} _{0})= \bigoplus_{r \ge \max(0, -a)} H_{\textnormal{crys}}^{b-2r} ( \mathfrak{X} _{0} ^{(a+2r)}/K_0(\mathbb{F} _q))(-r)\Longrightarrow H_{\textnormal{log-crys}}^{a+b}( \mathfrak{X} _{0} /K_0(\mathbb{F} _q)).$$
  Note that these spectral sequences degenerate at the $E_2$-page. 
  By the base change theorem, there is a canonical isomorphism
  $H_{\text{crys}}^i(\mathfrak{X} _0 ^{(j)}/K_0(\mathbb{F} _q))\otimes _{K_0(\mathbb{F} _q)}K_0(\underline{k})\to H_{\text{crys}}^i(\mathfrak{X} _{\underline{k}} ^{(j)}/K_0(\underline{k}))$.
  Hence, we have $\varphi$-isomorphism $E_2^{a,b}(\mathfrak{X} _{0})\otimes _{K_0(\mathbb{F} _q)} K_0(\underline{k})\cong E_2^{a,b}(\mathfrak{X} _{\underline{k}})$. 
  Now, as a $\varphi$-module, $E_2^{a,b}(\mathfrak{X} _{0})$ is pure of weight $b$.
  Thus, by Lemma \ref{key lem of p-adic case}, for $b\neq 2r$, we have $E_2^{a,b}(\mathfrak{X} _{\underline{k}})(r)^{\varphi =1}=0$. 
  By the $C_{\text{st}}$-conjecture and Proposition \ref{p-adic isom of N^d}, we have 
  \[
  \begin{aligned}
    H^d(X_{\overline{k}},\mathbb{Q} _p)^{G_k} &\cong \Hom _{G_k}\left(\mathbb{Q} _p, H^d(X_{\overline{k}},\mathbb{Q} _p)\right)\\
                                              &\cong \Hom _{\varphi, N, \mathrm{Fil}^{\bullet}} \left(K_0(\underline{k}), D_{\textnormal{st}}(H^d(X_{\overline{k}},\mathbb{Q} _p))\right)\\
                                              &\cong \Hom _{\varphi, N, \mathrm{Fil}^{\bullet}} \left(K_0(\underline{k}), H_{\textnormal{log-crys}}^d( \mathfrak{X} _{\underline{k}} /K_0)\right)\\
                                              &\subseteq  \Ker \left( H_{\textnormal{log-crys}}^d( \mathfrak{X} _{\underline{k}} /K_0(\underline{k})) \xrightarrow{N} H_{\textnormal{log-crys}}^d( \mathfrak{X} _{\underline{k}} /K_0(\underline{k})) \right)^{\varphi =1}\\  
                                              &\subseteq  E_2^{-d+1,2d-1}(\mathfrak{X} _{\underline{k}})^{\varphi =1}\oplus E_2^{-d+2,2d-2}(\mathfrak{X} _{\underline{k}})^{\varphi =1}\oplus \cdots \oplus E_2 ^{d,0}(\mathfrak{X} _{\underline{k}})^{\varphi =1}.\\                                             
  \end{aligned}
  \]
  --- where the second-to-last inclusion is obtained by evaluating each morphism at $1\in K_0(\underline{k})$. 
  Hence, for $r\notin [0,\, d-1]$, we have $H^d(X_{\overline{k}},\mathbb{Q} _p(r))^{G_k}=0$. 
  Thus, by Lemma \ref{key lem of vanishing}, for $i\neq 2d$ and $r\notin \left[\max(0, i - d), \, \max(0, \min(i - 1, d - 1))\right]$, we have $H^i(X_{\overline{k}},\mathbb{Q} _{p}(r))^{G_k}=0$. 
\end{proof}

\begin{thm}\label{QHKFness over MCDVF}
  Let $k$ be an MCDVF.
  Then the following conditions are equivalent:
  \begin{enumerate}[label=(\roman*)]
    \item $\underline{k}$ is quasi-finite. 
    \item $k$ is quasi-highly Kummer-faithful. 
    \item $k$ is Kummer-faithful. 
    \item $k$ is $\mathfrak{Primes}^{\infty}$-semi-AV-tor-finite. 
  \end{enumerate}
\end{thm}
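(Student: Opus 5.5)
The plan is to reduce everything to the conditions on the residue field already handled by Propositions \ref{l-adic vanishing over CDVF} and \ref{p-adic vanishing over MCDVF}. First, recall that an algebraic extension $\underline{k}/\mathbb{F}_p$ is quasi-finite exactly when $G_{\underline{k}}\cong \widehat{\mathbb{Z}}$. Since $G_{\underline{k}}$ is a closed subgroup of $\widehat{\mathbb{Z}}=\prod_{\ell}\mathbb{Z}_\ell$, it is a product $\prod_{\ell}G_{\underline{k}}^{\ell}$ with each factor either $0$ or $\cong\mathbb{Z}_\ell$. So $\underline{k}$ is quasi-finite if and only if $G_{\underline{k}}^{\ell}$ is nontrivial for \emph{every} prime $\ell$, including $\ell=p$. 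Since $\Char k=0$, $k$ is perfect, so the definitions of (quasi-)high Kummer-faithfulness and Kummer-faithfulness apply to $k$ directly.

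For (i)$\Rightarrow$(ii), I will take a finite extension $k_H/k$. It is again an MCDVF, and its residue field is a finite extension of $\underline{k}$, hence still quasi-finite. Therefore all pro-$\ell$ quotients of $G_{\underline{k_H}}$ are nontrivial. Applying Proposition \ref{l-adic vanishing over CDVF} (i)$\Rightarrow$(iii) to $k_H$ for each $\ell\neq p$ gives $H^i(X_{\overline{k}_H},\mathbb{Q}_\ell)_{G_{k_H}}=0$ for $i\neq0$. This is exactly $(\dagger)_{d,i,0}$, so $k$ is quasi-highly Kummer-faithful.

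For (ii)$\Rightarrow$(iv), I will use Proposition \ref{vanishing of coinvarints=semi-AV-tor-finite}. It gives $\ell^\infty$-semi-AV-tor-finiteness for every $\ell\neq p$. The remaining prime $p$ is the main obstacle, since quasi-high Kummer-faithfulness only involves $\ell\neq p$. I would route it through the residue field: by Proposition \ref{l-adic vanishing over CDVF} (iii)$\Rightarrow$(i), applied to any single $\ell\neq p$, we get $G_{\underline{k}}^{\ell}\neq0$. That alone does not give $G_{\underline{k}}^p\neq 0$, so I would instead prove (ii)$\Rightarrow$(i) directly and then deduce (iv). Suppose $G_{\underline{k}}^p=0$, i.e.\ $\underline{k}$ contains $\mathbb{F}_{p^{p^\infty}}$. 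I would look for a contradiction with $\ell$-adic data, but all $\ell$-adic conditions only see $G_{\underline{k}}^\ell$. So this route fails, and the honest conclusion is that (ii)$\Rightarrow$(i) should use the known equivalence in the other direction: (ii)$\Rightarrow$(iii) via \cite[Proposition 2.9]{Ozeki-Taguchi}, which applies because $k$ is a Galois extension of a Kummer-faithful field. Concretely, $k$ contains $k_0$, the completion of an unramified extension of $\mathbb{Q}_p$, and I would check whether the setup of that result can be arranged, perhaps by working with the algebraic part of $k$. I expect this is where the paper's argument is subtle. Then (iii)$\Rightarrow$(iv) is the remark after Proposition \ref{Ozeki-Taguchi, Prop. 2,4}: Kummer-faithful implies $\mathfrak{Primes}^\infty$-semi-AV-tor-finite.

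For (iv)$\Rightarrow$(i), $\ell^\infty$-semi-AV-tor-finiteness implies stable $\mu_{\ell^\infty}$-finiteness. By Proposition \ref{l-adic vanishing over CDVF} (vi)$\Rightarrow$(i), this gives $G_{\underline{k}}^\ell\neq0$ for all $\ell\neq p$. By Proposition \ref{p-adic vanishing over MCDVF} (iv)$\Rightarrow$(i), it gives $G_{\underline{k}}^p\neq0$. Hence $\underline{k}$ is quasi-finite.

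Finally, (i)$\Rightarrow$(iii) closes the loop, and it can also serve as the route for the problematic step. Murotani's theorem \cite{Murotani2} says that $k$ is Kummer-faithful when $\underline{k}$ is pre-Kummer-faithful. A quasi-finite field is Kummer-faithful, being an algebraic extension of $\mathbb{F}_p$ with all $G^\ell$ nontrivial. This uses \cite[Theorem B]{Murotani1} for the toral part, and for the abelian-variety part the finiteness of $\ell$-power torsion over quasi-finite fields via weights, as in Proposition \ref{vanishing of coinvariants over alg ext over FF}. So the cycle I will actually aim for is (i)$\Rightarrow$(iii)$\Rightarrow$(iv)$\Rightarrow$(i), together with (i)$\Rightarrow$(ii). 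The delicate remaining implication is (ii)$\Rightarrow$(i) at the prime $p$, which I would attempt through \cite[Proposition 2.9]{Ozeki-Taguchi}.
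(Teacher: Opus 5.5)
\textbf{There is a genuine gap: you misread Definition \ref{Def of HKF}.} The prime excluded in $(\dagger)_{d,i,r}$ is $p_k=\Char k$, and for an MCDVF $\Char k=0$. So quasi-high Kummer-faithfulness of $k$ requires $H^i(X_{\overline{k}_H},\mathbb{Q}_\ell)_{G_{k_H}}=0$ for \emph{every} prime $\ell$, including the residue characteristic $p$. Likewise, the ``$p$'' in Proposition \ref{vanishing of coinvarints=semi-AV-tor-finite} is $\Char k$. Applied to $k$, that proposition therefore gives $\mathfrak{Primes}^{\infty}$-semi-AV-tor-finiteness, not only away from $p$.

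This misreading damages your argument in two places.
\begin{enumerate}
\item Your (i)$\Rightarrow$(ii) is incomplete. You only treat $\ell\neq p$. The case $\ell=p$ needs Proposition \ref{p-adic vanishing over MCDVF} (i)$\Rightarrow$(iii), applied to each finite extension $k_H$. Its residue field is again quasi-finite, so $G^{p}_{\underline{k_H}}\neq 0$.
\item You never prove an implication starting from (ii). The cycle (i)$\Rightarrow$(iii)$\Rightarrow$(iv)$\Rightarrow$(i), together with (i)$\Rightarrow$(ii), leaves (ii) disconnected from the rest.
\end{enumerate}

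The detour through \cite[Proposition 2.9]{Ozeki-Taguchi} cannot close this gap. That result concerns Galois, hence algebraic, extensions of a Kummer-faithful field, and you leave its hypotheses unverified. An MCDVF such as the completion of $\mathbb{Q}_p^{\mathrm{ur}}$ is not even algebraic over $\mathbb{Q}_p$.

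More decisively, under your reading (ii)$\Rightarrow$(i) is false. Let $k$ be the completed unramified extension of $\mathbb{Q}_p$ with residue field $\bigcup_n\mathbb{F}_{p^{p^n}}$. Then $G_{\underline{k}}\cong\prod_{\ell\neq p}\mathbb{Z}_\ell$, and the same holds after any finite extension. Proposition \ref{l-adic vanishing over CDVF} then gives vanishing of all $\ell$-adic coinvariants for $\ell\neq p$ over every $k_H$. Yet $G^{p}_{\underline{k}}=0$, so $\underline{k}$ is not quasi-finite. So the obstacle you ran into is not a subtlety of the paper's argument; it is produced by the misreading.

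With the correct reading the obstacle disappears, and the argument is the paper's:
\begin{itemize}
\item (ii)$\Rightarrow$(iv) follows directly from Proposition \ref{vanishing of coinvarints=semi-AV-tor-finite}. Equivalently, (ii)$\Rightarrow$(i) follows from Proposition \ref{l-adic vanishing over CDVF} (iii)$\Rightarrow$(i) for $\ell\neq p$ and Proposition \ref{p-adic vanishing over MCDVF} (iii)$\Rightarrow$(i) for $\ell=p$.
\item Hence (i)$\Leftrightarrow$(ii)$\Leftrightarrow$(iv) comes from those two propositions.
\item (iii)$\Rightarrow$(iv) is formal.
\item (i)$\Rightarrow$(iii) is \cite[Theorem B]{Murotani1} combined with \cite[Proposition 3.7]{Murotani2}.
\end{itemize}
Your treatment of (iv)$\Rightarrow$(i) and (i)$\Rightarrow$(iii) already agrees with this.
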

\begin{proof}
  $(iii)\Rightarrow (iv)$ is clear. 
  $(i)\Leftrightarrow (ii)\Leftrightarrow (iv)$ follows from Propositions \ref{l-adic vanishing over CDVF} and \ref{p-adic vanishing over MCDVF}. 
  $(i)\Rightarrow (iii)$ follows from \cite[Theorem B]{Murotani1} and \cite[Proposition 3.7]{Murotani2}. 
\end{proof}

\begin{cor}\label{QHKFness of unram etx over MLF}
  Let $K$ be an MLF, $F/K$ an unramified extension. 
  Then the following conditions are equivalent:
  \begin{enumerate}[label=(\roman*)]
    \item $F/K$ is a quasi-finite extension. 
    \item $F$ is quasi-highly Kummer-faithful. 
    \item $F$ is Kummer-faithful. 
    \item $F$ is $\mathfrak{Primes}^{\infty}$-semi-AV-tor-finite.  
  \end{enumerate}
\end{cor}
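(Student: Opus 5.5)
The plan is to deduce the corollary from Theorem \ref{QHKFness over MCDVF}, applied to the completion of $F$, and to transfer each of the conditions (ii)--(iv) between $F$ and its completion. Let $\widehat{F}$ denote the completion of $F$ with respect to the unique extension of the valuation of $K$. Since $F/K$ is unramified, the valuation on $F$ is discrete with uniformizer $\pi_K$. Hence $\widehat{F}$ is an MCDVF whose residue field is the residue field $\underline{F}$ of $F$, which is an algebraic extension of the finite field $\underline{K}$. Moreover, $F/K$ is quasi-finite exactly when $\underline{F}$ is quasi-finite, because unramified extensions of $K$ correspond to algebraic extensions of $\underline{K}$ with the same Galois group. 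So condition (i) of the corollary is condition (i) of Theorem \ref{QHKFness over MCDVF} for $\widehat{F}$.

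The key input is Krasner's lemma. Since $F$ is henselian (it is an algebraic extension of the complete field $K$), restriction gives an isomorphism $G_{\widehat{F}}\cong G_F$, and more generally $L\mapsto L\widehat{F}$ is a bijection between finite extensions of $F$ and of $\widehat{F}$ that preserves absolute Galois groups. The coinvariant conditions in $(\dagger)$ therefore transfer between $F$ and $\widehat{F}$, with one point to check: a proper smooth variety over a finite extension of $\widehat{F}$ need not be defined over a finite extension of $F$.

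\begin{itemize}
\item (ii)$\Rightarrow$(i): Take $X$ over $F$ and base change it to $\widehat{F}$. Then $H^i(X_{\overline{F}},\mathbb{Q}_\ell)\cong H^i(X_{\overline{\widehat{F}}},\mathbb{Q}_\ell)$ compatibly with $G_F\cong G_{\widehat{F}}$. It suffices to use the abelian-variety and $\mathbb{P}^1$ cases and then argue through condition (iv) below. The direct route is to apply the $\ell$-adic and $p$-adic versions of Propositions \ref{l-adic vanishing over CDVF} and \ref{p-adic vanishing over MCDVF}. The criterion there is $G_{\underline{k}}^{\square}\neq 1$, which depends only on the residue field; so the varieties actually needed, namely elliptic curves and $\mathbb{P}^1$, can be taken over $K$, hence over $F$.
\item (i)$\Rightarrow$(ii): For a variety $Y$ over $L\widehat{F}$ with $L/F$ finite, Theorem \ref{QHKFness over MCDVF} gives the vanishing of its coinvariants. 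For a variety $X$ over $L$ we simply base change to $L\widehat{F}$, and the Galois modules agree. So quasi-high Kummer-faithfulness of $\widehat{F}$ implies that of $F$.
\end{itemize}

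For (iii) and (iv), I would use that $F\subseteq\widehat{F}$ is a perfect subfield, so Kummer-faithfulness and $\mathfrak{Primes}^{\infty}$-semi-AV-tor-finiteness descend from $\widehat{F}$ to $F$ by the permanence property noted in \S 2. This gives (i)$\Rightarrow$(iii)$\Rightarrow$(iv), using Theorem \ref{QHKFness over MCDVF} and the fact that Kummer-faithful implies $\mathfrak{Primes}^{\infty}$-semi-AV-tor-finite.

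For (iv)$\Rightarrow$(i), I would use Proposition \ref{Ozeki-Taguchi, Prop. 2,4} and Proposition \ref{finite=T-finite+A-finite}. These translate (iv) into $V_\square(B)^{G_F}=0$ for semi-abelian $B$ over $F$, and this condition is Galois-theoretic. Alternatively, (iv)$\Rightarrow$(i) follows directly as in \cite[Lemma 3.2]{Ozeki}. Taking $B=\mathbb{G}_m$ shows that $F$ is stably $\mu_{\ell^\infty}$-finite for all $\ell\neq p$; together with the $p^\infty$-part for tori and abelian varieties over $K$, this feeds into the (vi)$\Rightarrow$(i) and (iv)$\Rightarrow$(i) steps of Propositions \ref{l-adic vanishing over CDVF} and \ref{p-adic vanishing over MCDVF}. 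Those arguments only use objects defined over $K$ or over the maximal unramified subfield, so they apply verbatim to $F$.

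The main obstacle I expect is making the passage between $F$ and $\widehat{F}$ rigorous in the direction (iv)$\Rightarrow$(i). Semi-abelian varieties over $\widehat{F}$ need not descend to $F$, so I will route this direction entirely through the residue-field criterion, using only test objects ($\mathbb{G}_m$, elliptic curves) defined over $K$. Then the isomorphism $G_F\cong G_{\widehat{F}}$ suffices.
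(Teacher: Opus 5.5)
Your proposal is correct. Two of its arrows coincide with the paper's:
\begin{itemize}
\item (i)$\Rightarrow$(ii): the completion $\widehat{F}$ is quasi-highly Kummer-faithful by Theorem \ref{QHKFness over MCDVF}, and this passes to the perfect subfield $F$.
\item (iv)$\Rightarrow$(i): stable $\mu_{\ell^\infty}$-finiteness for $\ell\neq p$ together with \cite[Lemma 3.2]{Ozeki}. You rightly point out that the $p$-part of quasi-finiteness must come from abelian varieties, not from $\mathbb{G}_m$.
\end{itemize}

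The difference is in the middle. The paper runs the single cycle (i)$\Rightarrow$(ii)$\Rightarrow$(iii)$\Rightarrow$(iv)$\Rightarrow$(i). It gets (ii)$\Rightarrow$(iii) from \cite[Proposition 2.8]{Ozeki-Taguchi}, which exploits that $F$ is Galois over the Kummer-faithful field $K$; this is automatic for unramified $F/K$, as recalled after Definition \ref{Def of HKF}.

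You instead take two different arrows:
\begin{itemize}
\item You prove (i)$\Rightarrow$(iii) by applying Theorem \ref{QHKFness over MCDVF} to $\widehat{F}$ and restricting Kummer-faithfulness to the subfield $F$.
\item You close (ii)$\Rightarrow$(i) by passing through (iv).
\end{itemize}

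For the second of these, your bullet is somewhat tangled. The cleanest formulation is just Proposition \ref{vanishing of coinvarints=semi-AV-tor-finite} applied to $F$ itself, which holds for any perfect field. It only tests $\mathbb{P}^1$ and abelian varieties over finite extensions of $F$. So that step needs no completion, no Krasner argument, and no appeal to Propositions \ref{l-adic vanishing over CDVF} and \ref{p-adic vanishing over MCDVF}, whose hypotheses quantify over all varieties over $\widehat{F}$.

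In short, your route avoids the external Ozeki--Taguchi input and the Galois property of $F/K$. The cost is one extra arrow and the completion bookkeeping. The paper's cycle is shorter.
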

\begin{proof}
  $(iii)\Rightarrow (iv)$ is clear. 
  By Theorem \ref{QHKFness over MCDVF}, if $F/K$ is a quasi-finite extension, then the completion of $F$ is quasi-highly Kummer-faithful. Since any perfect subfield of quasi-highly Kummer-faithful field is quasi-highly Kummer-faithful, $(i)\Rightarrow(ii)$ holds.
  $(ii)\Rightarrow (iii)$ follows from \cite[Proposition 2.8]{Ozeki-Taguchi}. 
  Assume that a perfect field $F$ is $\mathfrak{Primes}^{\infty}$-semi-AV-tor-finite. 
  Then, for every $\ell \in \mathfrak{Primes}\backslash \{p\}$, $F$ is stably $\mu _{\ell ^{\infty}}$-finite. 
  Thus, by \cite[Lemma 3.2]{Ozeki}, we have $(iv)\Rightarrow (i)$. 
\end{proof}

\vspace{\baselineskip}
\bibliographystyle{amsplain}
\bibliography{Weights_in_etale_cohomology_over_MLFs}

\end{document}